\documentclass[a4paper,12pt,reqno]{amsart}
\usepackage[latin1]{inputenc}
\usepackage[english]{babel}
\usepackage{amssymb,times}
\usepackage{graphicx}
\usepackage[pdftex,usenames,dvipsnames]{color}
\usepackage{xcolor}
\usepackage[all]{xy}
\usepackage[bookmarks=true,hyperindex,pdftex,colorlinks, citecolor=blue]{hyperref}
\usepackage{url}
\usepackage{mathtools}  

\usepackage[mathscr]{euscript}

\newcommand{\aj}[1]{\textcolor{blue}{#1}}

\renewcommand{\theenumi}{(\roman{enumi})}
\renewcommand{\labelenumi}{(\roman{enumi})}

\newcommand{\T}{\mathbb{T}}

\newcommand{\K}{\mathbb{K}}

\newcommand{\C}{\mathbb{C}}
\newcommand{\R}{\mathbb{R}}
\newcommand{\N}{\mathbb{N}}
\newcommand{\G}{\Gamma}
\newcommand\eps{\ensuremath{\varepsilon}}
\newcommand{\bof}{\mathrm{bof}}

\newcommand{\norm}[1]{\left\Vert#1\right\Vert}

\newcommand{\dist}{\mathrm{dist}}
\newcommand{\diam}{\mathrm{diam}}

\newcommand{\aconv}{{\mathrm{aco}}}
\newcommand{\itp}{\mathbin{\hat{\otimes}_\varepsilon}}

\newcommand{\re}{\operatorname{Re}}
\newcommand{\st}{\operatorname{St}}
\newcommand{\Fl}{\operatorname{Fl_\G}}
\newcommand\ACK{\mathrm{ACK}}
\newcommand{\algebra}{\ensuremath{\mathscr{A}}}


\newtheorem{theo}{Theorem}[section]
\newtheorem{lem}[theo]{Lemma}
\newtheorem{pro}[theo]{Proposition}
\newtheorem{exa}{Example}

\newtheorem{cor}[theo]{Corollary}

\newtheorem{rem}[theo]{Remark}

\theoremstyle{definition}
\newtheorem{defi}[theo]{Definition}

\theoremstyle{remark}

\numberwithin{equation}{section}

\title[$\G$-flatness and Bishop--Phelps--Bollob\'as type theorems]{$\G$-flatness and Bishop--Phelps--Bollob\'as type theorems for operators}

\date{VERSION: 31 December, 2016}


\begin{document}

\author[Cascales]{Bernardo Cascales}
\author[Guirao]{Antonio J. Guirao}
\author[Kadets]{Vladimir Kadets}
\author[Soloviova]{Mariia Soloviova}

\thanks{The research of first, second and third authors was partially supported by
MINECO grant MTM2014-57838-C2-1-P and Fundaci\'on S\'eneca, Regi\'on de Murcia  grant 19368/PI/14. The research of the third author is done in frames of Ukranian Ministry of Science and Education Research Program $0115$U$000481$. The  research of fourth author has been partially performed during her stay in University of Murcia in frames of Erasmus+ program.}

\address{Depto de Matem\'aticas, Universidad de Murcia, 30100 Espinardo, Murcia, Spain}\email{beca@um.es}

\address{IUMPA, Universitat Polit\`ectnica de Val\`encia, 46022, Valencia, Spain}\email{anguisa2@mat.upv.es}

\address{Department of Mathematics and Informatics\\ V.N. Karazin Kharkiv  National University\\ 61022 Kharkiv, Ukraine}
\email{v.kateds@karazin.ua}
\email{mariiasoloviova93@gmail.com}

\subjclass[2010]{Primary: 46B20, 46E25 Secondary: 47B07, 47B48.}
\keywords{Bishop--Phelps--Bollob\'as, Asplund operators, norm attaining, uniform algebra}

\begin{abstract}
The  Bishop--Phelps--Bollob\'{a}s property deals with simultaneous approximation of an operator $T$ and a vector $x$ at which $T$ nearly attains its norm by an operator $T_0$ and a vector $x_0$, respectively, such that $T_0$ attains its norm at $x_0$. In this note we extend the already known results about {the} Bishop--Phelps--Bollob\'{a}s property for Asplund operators  to a wider class of Banach spaces and to a wider class of operators. Instead of proving a BPB-type theorem for each space separately we isolate two main notions: $\G$-flat operators and Banach spaces with $\ACK_\rho$ structure. In particular, we prove a general  BPB-type theorem for $\G$-flat operators acting to a space with $\ACK_\rho$ structure and show that uniform algebras and spaces with the property $\beta$ have $\ACK_\rho$ structure.  We also study the stability of the $\ACK_\rho$ structure under some natural Banach space theory operations. As a consequence, we discover many new examples of spaces $Y$ such that the Bishop--Phelps--Bollob\'{a}s property for Asplund operators is valid for all pairs of the form ($X,Y$).
\end{abstract}
\maketitle

\section{Introduction}

In this paper $X$, $Y$ are Banach spaces (real or complex), $\K$ stands for the field of scalars $\R$ or $\C$,  $L(X,Y)$ is the space of all bounded linear operators $T\colon X \to Y$, $L(X) = L(X,X)$,  $B_X$ and $S_X$ denote the closed unit ball and the unit sphere of $X$, respectively and $\aconv A$ stands for the absolute convex hull of the set $A$.

According to  \cite{aco-aro-gar-mae}, a pair $(X,Y)$ has the \emph{Bishop--Phelps--Bollob\'as property} (BPB property) for operators if for {every} $\eps>0$ there exists $\delta(\eps)>0$ such that for every  operator  $T\in L(X,Y)$ of norm 1, if $x_0\in S_X$ is such that $\norm{T(x_0)}>1-\delta(\eps)$, then there exist $u_0\in S_X$ and $S\in S_{L(X,Y)}$ satisfying $ \norm{S(u_0)}=1$, $ \norm{x_0-u_0}<\eps$, and $\norm{T-S}<\eps$.

If an analogous definition is valid for operators $T$, $S$ from a subspace  $\mathcal I \subset L(X,Y)$, then we say that  $(X,Y)$ has the \emph{Bishop--Phelps--Bollob\'as property for operators from} $\mathcal I$.

With this terminology, the original Bishop--Phelps--Bollob\'as theorem \cite{bol} says that for every $X$, the pair $(X, \K)$ has the BPB property for operators. Also, thanks to Acosta, Aron,  Garc\'{\i}a, and Maestre  \cite[Theorem 2.2]{aco-aro-gar-mae}, if $Y$ has the Lindenstrauss' property $\beta$ (Definition \ref{def:beta}), then for every Banach space  $X$ the pair $(X, Y)$ has the Bishop--Phelps--Bollob\'{a}s property for operators.

In 2011 Aron, Cascales, and Kozhushkina  \cite[Theorem 2.4]{cas-alt23} showed that for every $X$ and every compact Hausdorff space $K$  the pair $(X, C(K))$  has the  BPB property for Asplund operators  (Definition \ref{def_aspl-oper}).  In 2013 Cascales, Guirao and Kadets \cite{CasGuiKad} extended this result to uniform algebras $ \algebra \subset C(K)$. The exact statement of the last result is given below.

\begin{theo}[{\cite[Theorem 3.6]{CasGuiKad}}]\label{theo:thetheoremainFA}
Let $ \algebra \subset C(K)$ be a uniform algebra and  $T\colon X\to  \algebra$ be an Asplund operator with $\norm{T}=1$. Suppose that {$0<\eps<\sqrt{2}$} and $x_0\in S_X$ are such that $\norm{Tx_0}>1-\frac{\eps^2}{2}$. Then there exist $u_0\in S_X$ and an Asplund operator $S\in S_{L(X, \algebra)}$ satisfying that:
\[
\norm{Su_0}=1,\quad \norm{x_0-u_0} \leq \eps\quad\text{ and }\quad\norm{T-S}<2\eps.
\]
\end{theo}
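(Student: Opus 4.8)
The plan is to follow the strategy of the $C(K)$-case of Aron--Cascales--Kozhushkina, combining three ingredients: the Bishop--Phelps--Bollob\'as theorem for norm-one functionals on $X$, the fragmentability (Radon--Nikod\'ym) properties of the set $T^*(B_{\algebra^*})$ that come from $T$ being an Asplund operator, and the peaking functions of the uniform algebra $\algebra$. Put $\delta=\eps^2/2$. The Choquet boundary of $\algebra$ is a norming boundary whose points carry peaking functions, so there is such a point $t_0$ with $|Tx_0(t_0)|>1-\delta$; replacing $T$ by $\lambda T$ for a suitable $\lambda$ of modulus one we may assume $\re\langle x_0,T^*\delta_{t_0}\rangle>1-\delta$, while $\norm{T^*\delta_t}\le1$ for every $t\in K$.

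Now the Asplund hypothesis enters. The set $T^*(B_{\algebra^*})$ is an Asplund set in $X^*$, hence norm-fragmented, so the weak$^*$-continuous map $t\mapsto T^*\delta_t$ has, over the open set $\{t:\re Tx_0(t)>1-\delta\}$ and its closure, nonempty relatively weak$^*$-open pieces of arbitrarily small norm-diameter. A fragmentation argument there (which can be carried out among the Choquet-boundary points) produces a Choquet-boundary point $t_1$ with $|\langle x_0,\phi\rangle|>1-\eps^2/2$ for $\phi:=T^*\delta_{t_1}$, together with an open $V\subset K$ such that $t_1\in V$ and $\diam_{\norm{\cdot}}\{T^*\delta_t:t\in V\}$ is as small as we wish (possibly after shrinking $V$). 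Normalising $\phi$ if necessary and applying the Bishop--Phelps--Bollob\'as theorem in $X$ --- this is exactly where the hypothesis $\eps<\sqrt2$ is used --- we obtain $u_0\in S_X$ and $\psi\in S_{X^*}$ with $\langle u_0,\psi\rangle=1$, $\norm{x_0-u_0}\le\eps$, and $\norm{\phi-\psi}<\eps+\eps^2/2<2\eps$.

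Next one transplants the fixed functional $\psi$ back into an operator by means of a peaking function. Pick $h\in\algebra$ with $h(t_1)=1=\norm{h}$ and $|h|$ as small as desired off $V$, and set
\[
 Sx:=(1-h)\,Tx+h\,\langle x,\psi\rangle ,\qquad x\in X ,
\]
which lies in $\algebra$ because $\algebra$ is an algebra containing the constants and $Tx\in\algebra$. Then $Su_0(t_1)=\langle u_0,\psi\rangle=1$, so $\norm{Su_0}\ge1$; moreover $S$ is the sum of $T$ followed by multiplication by $1-h$ on $\algebra$ and a rank-one operator, so each summand, hence $S$, is an Asplund operator; and, splitting $K$ into $V$ and its complement, $\norm{S-T}=\sup_{x\in B_X}\norm{h\,(\langle x,\psi\rangle-Tx)}<2\eps$, since off $V$ the factor $h$ is tiny while on $V$ the diameter bound forces $Tx(t)$ to lie within $\norm{\phi-\psi}+\diam_{\norm{\cdot}}\{T^*\delta_s:s\in V\}$ of $\langle x,\psi\rangle$ for every $x\in B_X$.

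The one genuinely delicate point --- and the step I expect to be the main obstacle --- is to show $\norm{S}=1$; granting it, $\norm{Su_0}=\norm{S}=1$, so $S\in S_{L(X,\algebra)}$ attains its norm at $u_0$ and the proof is complete. On $V$ the bound $|Sx(t)|\le1$ again follows from the diameter control, because there $Sx(t)$ is only a small perturbation of $\langle x,\psi\rangle$; off $V$ one combines $\norm{T}=1$ with the smallness of $|h|$. In the $C(K)$-case one simply takes $h$ to be a real Urysohn function with values in $[0,1]$, for which $|Sx(t)|\le(1-h(t))|Tx(t)|+h(t)|\langle x,\psi\rangle|\le1$ at once; a uniform algebra has no non-constant real-valued functions in general, so here the algebra structure must be used much more carefully --- selecting the peaking function and the localising set $V$ so that the complex factor $h$ does no harm, and, if necessary, replacing the one-step construction above by a convergent iteration of such perturbations with geometrically small parameters, so that the successive normalisations telescope, keeping $\sum_n\norm{S_{n+1}-S_n}<2\eps$ while $\norm{S_n}\to1$ and the norm gets attained in the limit. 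Carrying out this norm bookkeeping, and checking that all the auxiliary choices ($t_1$, $V$, $h$, the rescaling of $\phi$) can be made compatibly, is where the real work lies.
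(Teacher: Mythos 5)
Your outline correctly reproduces the skeleton of the argument (fragmentation of $T^*(B_{\algebra^*})$ via the Asplund hypothesis, a Bishop--Phelps--Bollob\'as step for the functional $T^*\delta_{t_1}$, and a perturbation $Sx=(1-h)Tx+h\,\psi(x)$ built from a peak function), and this is indeed the route of \cite{cas-alt23} and \cite{CasGuiKad}, which the present paper repackages abstractly: the Basic Lemma \ref{ACK-Lem} plus Proposition \ref{SharpBPB} handle your first two steps (note that Proposition \ref{SharpBPB} is exactly the device needed to deal with the fact that $\phi=T^*\delta_{t_1}$ is generally not of norm one), and Theorem \ref{theo:thetheoremainFAnewnew} combined with Corollary \ref{cor:unalg->ACK} then yields the statement, in fact with the better bound $\eps$ in place of $2\eps$. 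But you have not proved the theorem: the step you yourself flag as ``the main obstacle,'' namely $\norm{S}\le 1$, is precisely the nontrivial content, and your proposal neither supplies the idea that closes it nor a workable substitute. For a complex $h$ with $\norm{h}_\infty=1$ the pointwise bound is only $|Sx(t)|\le |1-h(t)|+|h(t)|\le 1+2|h(t)|$, which can be as large as $3$, and shrinking $V$ or iterating perturbations with geometrically small parameters does not repair this, because the excess $|h(t)|+|1-h(t)|-1$ is not controlled by the localisation of $h$.

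The two missing ingredients are: (a) the peak function must be chosen with range in the Stolz region $\st_\eps=\{z:|z|+(1-\eps)|1-z|\le 1\}$ --- this is the content of \cite[Lemmas 2.5 and 2.7]{CasGuiKad}, restated here as Lemmas \ref{Ur-alg} and \ref{lem-e-f} (with Lemma \ref{stolzpowern} used to keep powers of $e$ inside $\st_\eps$); and (b) the $T$-part must be damped, i.e.\ one takes $Sx=\psi(x)\,fe+(1-\widetilde\eps)(1-f)Tx$ with $\widetilde\eps\ge\eps$, so that $\norm{S^*\delta_t}\le|f(t)|+(1-\eps)|1-f(t)|\le 1$ for every $t$. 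The damping is also why the final estimate in Theorem \ref{theo:thetheoremainFA} is $2\eps$ rather than $\eps$. In the present paper these two facts are exactly what is axiomatised as properties (IV)/(IV)${}'$ of Definition \ref{def_ACK} and verified for uniform algebras in Theorem \ref{theo:unalg->ACK}; without them (or an equivalent device) your construction does not produce an operator of norm one, so the proof is incomplete at its decisive point.
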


In the same vein, Acosta,  Becerra Guerrero,  Garc\'{\i}a, Kim, and  Maestre \cite{aco-guer-gar-kim-mae} generalized \cite[Theorem 2.4]{cas-alt23} to some spaces of continuous vector-valued functions (see Theorem \ref{theo:C(K,Y)} below).

The aim of this paper is to extend all these results to a wider class of Banach spaces and to a wider class of operators.  The main difference of our approach is that instead of proving a Bishop--Phelps--Bollob\'{a}s kind theorem for each space separately (and thus repeating essential parts of the proof many times), we introduce a new Banach space property (called $\ACK_\rho$ structure) which extracts all the useful technicalities for the  BPB type of approximation. We prove a general  Bishop--Phelps--Bollob\'{a}s type theorem for  $\G$-flat operators (see Definition~\ref{def_gam-flat-op}) acting to a space with $\ACK_\rho$ structure and show that uniform algebras and spaces with {the} property $\beta$ have $\ACK_\rho$ structure.  After that, we study the stability of the $\ACK_\rho$ structure under some natural Banach space theory operations which as a consequence gives us a wide collection of examples of pairs $(X,Y)$ possessing the  BPB property for Asplund operators.

The structure of the paper is as follows. In section $2$ we collect the necessary definitions (in particular that of Asplund operators and of $\G$-flat operators) and prove an important Basic Lemma. In section $3$ we introduce the central concept of $\ACK_\rho$ structure and prove a general BPB type theorem for this class of Banach spaces. Finally, in section $4$ we perform the announced study of spaces with $\ACK_\rho$ structure which, on the one hand, gives a unified proof of several results from  \cite{aco-aro-gar-mae,aco-guer-gar-kim-mae,cas-alt23} and \cite{CasGuiKad}, and on the other hand,  leads  to new  BPB type theorems in concrete spaces.

For the non-defined notions used through this article, we refer to \cite{LosCinco}.


\section{$\G$-Flat operators and the Basic Lemma}

Let $(B, \tau)$ be a topological space,  $\rho$ be a metric on $B$ (possibly, not related with $\tau$). $B$ is said to be \emph{fragmented by}  $\rho$, if for every non-empty subset $A \subset B$ and for every  $\eps>0$ there exists a   $\tau$-open  $U$ such that $U \cap A \neq \emptyset$ and $\diam(U \cap A) < \eps$. Some important examples of fragmented topological spaces  come from Banach space theory.  For instance, every weakly compact subset of a Banach space is fragmented by the norm (i.e., by the metric $\rho(x,y) = \norm{x - y}$), see~\cite{Namioka}.

A Banach space $X$ is called an \textit{Asplund space} if, whenever $f$ is a convex continuous function defined on an open subset $U$ of $X$, the set of all points of $U$ where $f$ is Fr\'{e}chet differentiable is a dense $G_\delta$-subset of $U$. This definition is due to Asplund \cite{Asplund} under the name \textit{strong differentiability space}. This concept has multiple characterizations via topology or measure theory, as in the following:

\begin{theo}[{\cite{NamPh,Steg1,Steg2}}]\label{Aspl-eqviv}
   Let $X$ be a Banach space. Then the following conditions are equivalent:
   \begin{enumerate}
      \item X is an Asplund space;
      \item every $w^*$-compact subset of $(X, w^*)$ is fragmented by the norm;
      \item each separable subspace of $X$ has separable dual;
      \item $X^*$ has {the} Radon-Nikod\'{y}m property.
   \end{enumerate}
\end{theo}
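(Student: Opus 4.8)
The plan is to establish the four equivalences around a cycle, say $(i)\Rightarrow(iii)\Rightarrow(iv)\Rightarrow(ii)\Rightarrow(i)$, by assembling the classical pieces rather than reproving each from the ground up.

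The link $(i)\Leftrightarrow(iii)$ is Asplund's theorem combined with a separable reduction. In the separable case a Banach space is Asplund precisely when its dual is norm-separable: one direction is Asplund's original result (Fr\'echet differentiability of an equivalent norm on a dense set, tested against a countable dense set of directions, forces $B_{X^*}$ to be norm-separable), and the other follows because norm-separability of $X^*$ makes every $w^*$-compact subset of $X^*$ norm-fragmented — cover it by countably many $w^*$-closed norm-$\eps$-balls and use the Baire property of a $w^*$-compact Hausdorff space — whence differentiability via the argument of $(ii)\Rightarrow(i)$ below. Since (iii) is visibly separably determined and (i) reduces to the separable case once the equivalence is known there, this handles general $X$ (and yields as a byproduct that Asplundness passes to closed subspaces).

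For $(iii)\Rightarrow(iv)$ I would use the separable determination of the Radon--Nikod\'ym property: every separable subspace of $X^*$ embeds isometrically into $Z^*$ for some separable $Z\subset X$, and by hypothesis $Z^*$ is separable, hence has the RNP (classical for separable dual spaces); so every separable subspace of $X^*$ has the RNP and therefore so does $X^*$. For $(iv)\Rightarrow(ii)$ I would combine the dentability characterization of the RNP with the fact that a $w^*$-compact convex subset of a dual space is dentable if and only if it is $w^*$-dentable (admits $w^*$-open slices of arbitrarily small diameter); a standard exhaustion over relatively $w^*$-open pieces of diameter $<\eps$ then produces norm-fragmentation of every $w^*$-compact subset. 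Finally, $(ii)\Rightarrow(i)$ is the Namioka--Phelps argument: for a convex continuous $f$ on an open $U\subset X$ the subdifferential $\partial f$ is locally bounded, so locally its values lie in a fixed $w^*$-compact set; norm-fragmentation of that set forces $\partial f$ to be single-valued and norm-to-norm upper semicontinuous — equivalently, $f$ Fr\'echet differentiable — on a dense $G_\delta$ subset of $U$, the Baire-category bookkeeping being routine.

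The main obstacle is the pair $(ii)\Leftrightarrow(iv)$, that is, the precise relationship between norm-fragmentability of $w^*$-compact subsets of $X^*$ and the RNP of $X^*$, which hinges on the interplay of fragmentability and $w^*$-dentability; the separable reductions are the other delicate point. Everything else is either soft (Baire category) or direct quotation, and since all four statements are entirely classical it suffices to cite \cite{Asplund,NamPh,Steg1,Steg2} for the full details, the sketch above only recording how the pieces chain together.
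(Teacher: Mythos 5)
The paper does not actually prove Theorem~\ref{Aspl-eqviv}: it is quoted as a classical result with citations to Namioka--Phelps and Stegall, so there is no in-paper argument to compare yours against. Your outline is a correct and standard way of chaining the equivalences, and since you ultimately defer the details to the same canonical sources, it is fully consistent with how the paper treats the statement. Two spots in your sketch are looser than the rest and deserve a flag: in the separable case of (i)$\Rightarrow$(iii), ``Fr\'echet differentiability of the norm on a dense set tested against countably many directions'' does not by itself yield norm-separability of $B_{X^*}$ --- one needs \v{S}mulyan's criterion (Fr\'echet smoothness at $x$ means the $w^*$-slices of $B_{X^*}$ determined by $x$ have small norm diameter) together with Bishop--Phelps density of norm-attaining functionals, or else the contrapositive tree/bump construction showing a separable space with nonseparable dual carries a nowhere Fr\'echet differentiable convex function; and in (iv)$\Rightarrow$(ii), dentability and $w^*$-dentability produce small slices only of \emph{convex} sets, so to fragment an arbitrary $w^*$-compact set you must pass to the $w^*$-closed convex hull of a given subset and verify that a small $w^*$-slice of the hull actually meets the subset (the standard extreme-point/convex-hull argument). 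Both points are classical and are covered by the references you and the paper cite, so your proposal stands as a faithful roadmap rather than a gap.
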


According to the above, every reflexive space and every separable space whose dual is separable is an Asplund space. Classical example of Asplund spaces are  $L_p$ and $\ell_p$ with $1 < p < \infty$, and also $c_0$;  examples of spaces that are not Asplund are $C[0,1]$, $\ell_1$, $\ell_\infty$, $L_1[0,1]$ and  $L_\infty[0,1]$, see for instance~\cite{DU}.

\begin{defi}[{\cite{Steg3}}] \label{def_aspl-oper}
   An operator $T\in L(X,Y)$ is said to be an \textit{Asplund operator} if it factors through an Asplund space, i.e., there exist an Asplund Banach space $Z$ and operators $T_1\in L(X,Z), T_2\in L(Z,Y)$ such that $T=T_2\circ T_1$.
\end{defi}

Compact and weakly compact operators are Asplund operators (every weakly compact operator factorizes through a reflexive space).

Theorem \ref{Aspl-eqviv} yields the following result:

\begin{rem}[{\cite{Steg3}}]\label{Aspl-frag}
    If $T$ is an Asplund operator, then its adjoint $T^*$ sends the unit ball of $Y^*$ into a $w^*$-compact subset of $(X,w^*)$ that is norm fragmented.
\end{rem}

\begin{defi} \label{def_BPBp}
   Let $Y$ be a Banach space. $Y$ is said to have the \emph{Bishop--Phelps--Bollob\'{a}s property for Asplund operators} (A-BPBp for short) if for {every} $\eps>0$ there exists $\delta(\eps)>0$, such that for every Banach space $X$ and every Asplund operator $T\in S_{L(X,Y)}$, if $x_0\in S_X$ is such that $\norm{T(x_0)}>1-\delta(\eps)$, then there exist $u_0\in S_X$ and $S\in S_{L(X,Y)}$ satisfying
    \[
        \norm{S(u_0)}=1, \norm{x_0-u_0}<\eps \text{ and }\norm{T-S}<\eps.
    \]
\end{defi}

\begin{defi}[\cite{Kemp}] \label{def:quasi-cont}
   Let $A$ and $B$ be  topological spaces. A function ${f\colon A \to B}$ is said to be \emph{quasi-continuous}, if for every non-empty open subset $U \subset A$, every $z \in U$ and every neighborhood $V$ of $f(z)$ there exists a non-empty open subset $W\subset U$ such that $f(W)\subset V$.
\end{defi}

Let us introduce some new terminology. Note that a similar concept of fragmentability of maps was introduced in~\cite{Kou}.

\begin{defi} \label{def_op-fragm-func}
   Let $A$ be a topological space and $(M,d)$ be a metric space. A function $f\colon A \to M$ is said to be \emph{openly fragmented}, if for every non-empty open subset $U \subset A$ and every $\eps > 0$ there exists a non-empty open subset $V \subset U$ with $d$-$\diam(f(V)) < \eps$.\end{defi}

Every continuous or quasi-continuous function  $f\colon A \to M$ is openly fragmented. In particular, if $A$ is a discrete topological space then every $f\colon A \to M$ is openly fragmented. For every metric space $M$, every left-continuous  $f\colon [0, 1] \to M$ and every right-continuous function $f\colon [0, 1] \to M$ are openly fragmented. Every  $f\colon A \to M$ with a dense set of continuity points  is openly fragmented. Every separately continuous function of two variables $f\colon [0, 1] \times [0,1] \to M$ is quasi-continuous \cite{baire} and, consequently, openly fragmented.  Some other easy but useful examples are given in  the following theorem:

\begin{theo}\label{theo:op-fragm-comp}
   Let $A, B$ be topological spaces,  $\rho$ be a metric on $B$ (possibly, not related with the original topology), and $f\colon A \to B$ be a function.
   \begin{enumerate}
      \item[(i)] If $B$ is fragmented by  $\rho$, and $f$ is continuous in the original topologies,  then $f\colon  A \to (B, \rho)$ is openly fragmented.
      \item[(ii)] If $A$ is fragmented  by some metric $\rho_1$ and $f\colon (A, \rho_1) \to (B, \rho)$ is uniformly continuous, then $f\colon A \to (B, \rho)$  is openly fragmented.
   \end{enumerate}
   Let, moreover, $(B,\norm{\cdot})$ be a Banach space. Then
   \begin{enumerate}
      \item[(iii)] If $f, g\colon  A \to (B, \norm{\cdot})$ are openly fragmented then $f + g\colon  A \to (B,  \norm{\cdot})$ is openly fragmented.
      \item[(iv)] If $f\colon  A \to (B,  \norm{\cdot})$ and $ g\colon  A \to \K$ are openly fragmented then $gf\colon  A \to (B,  \norm{\cdot})$ is openly fragmented.
   \end{enumerate}
\end{theo}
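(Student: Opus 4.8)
The plan is to establish the four items in turn. Items (i) and (ii) follow directly from unwinding, respectively, the definition of fragmentability and the definition of uniform continuity; items (iii) and (iv) follow by iterating the defining property of open fragmentedness along a decreasing chain of non-empty open sets, together with the subadditivity of the diameter.

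For (i), given a non-empty open $U\subset A$ and $\eps>0$, I would apply the fragmentability of $(B,\rho)$ to the non-empty subset $f(U)$ of $B$, producing a $\tau$-open $W\subset B$ with $W\cap f(U)\neq\emptyset$ and $\rho$-$\diam(W\cap f(U))<\eps$. Since $f$ is continuous for the original topologies, $f^{-1}(W)$ is open in $A$, so $V:=U\cap f^{-1}(W)$ is open; it is non-empty because any $b\in W\cap f(U)$ is of the form $b=f(a)$ with $a\in U$, whence $a\in V$; and $f(V)\subset f(U)\cap W$, so $\rho$-$\diam(f(V))<\eps$. For (ii), given a non-empty open $U\subset A$ and $\eps>0$, I would first use the uniform continuity of $f\colon(A,\rho_1)\to(B,\rho)$ to pick $\delta>0$ such that $\rho_1(a,a')<\delta$ implies $\rho(f(a),f(a'))<\eps/2$, and then apply the fragmentability of $A$ by $\rho_1$ to the subset $U$ to obtain an open $W$ with $W\cap U\neq\emptyset$ and $\rho_1$-$\diam(W\cap U)<\delta$; then $V:=W\cap U$ is non-empty, open, contained in $U$, and $\rho$-$\diam(f(V))\le\eps/2<\eps$.

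For (iii), given a non-empty open $U$ and $\eps>0$, I would use open fragmentedness of $f$ to shrink $U$ to a non-empty open $V_1\subset U$ with $\diam(f(V_1))<\eps/2$, and then use open fragmentedness of $g$ to shrink $V_1$ to a non-empty open $V\subset V_1$ with $\diam(g(V))<\eps/2$. As $(f+g)(V)\subset f(V)+g(V)$ and the diameter is subadditive,
\[
\diam\big((f+g)(V)\big)\le\diam(f(V))+\diam(g(V))<\eps .
\]

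For (iv) the extra ingredient is that $|g|$ and $\norm{f}$ are each bounded on a sufficiently small non-empty open set. Given a non-empty open $U$ and $\eps>0$, I would first shrink $U$ to a non-empty open $V_1\subset U$ with $\diam(g(V_1))<1$; fixing any $a_1\in V_1$ this gives $|g(a)|\le c:=|g(a_1)|+1$ for all $a\in V_1$. Next I would shrink $V_1$ to a non-empty open $V_2\subset V_1$ with $\diam(f(V_2))<\min\{1,\,\eps/(2(c+1))\}$; fixing any $a_2\in V_2$ this gives $\norm{f(a)}\le m:=\norm{f(a_2)}+1$ for all $a\in V_2$. Finally I would shrink $V_2$ to a non-empty open $V\subset V_2$ with $\diam(g(V))<\eps/(2(m+1))$. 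The estimate
\[
\norm{g(a)f(a)-g(a')f(a')}\le|g(a)|\,\norm{f(a)-f(a')}+|g(a)-g(a')|\,\norm{f(a')}
\]
then bounds the left-hand side, for $a,a'\in V$, by $c\cdot\eps/(2(c+1))+m\cdot\eps/(2(m+1))<\eps$, so $\diam((gf)(V))<\eps$. I expect (iv) to be the most involved, the only delicate point being the ordering of its three steps: the admissible size of $\diam(f(V_2))$ is dictated by the bound $c$ already secured for $|g|$, and the admissible size of $\diam(g(V))$ by the bound $m$ already secured for $\norm{f}$, so these bounds must be in hand before the corresponding thresholds are chosen. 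Beyond this bookkeeping the argument is routine, and (i) and (ii) are immediate.
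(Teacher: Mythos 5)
Your proof of (i) is exactly the paper's argument (pull back a fragmenting open set $W$ for $f(U)$ and intersect with $U$), and your verifications of (ii)--(iv), which the paper simply declares routine, are correct, including the careful ordering of the local bounds for $|g|$ and $\norm{f}$ in (iv). Nothing to add.
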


\begin{proof}
(i) For  {a given non-empty} open subset $U \subset A$ consider $f(U) \subset B$. By $\rho$-fragmentability of {$B$}, for every  $\eps > 0$ there exits an open subset $W$ of  $B$ with $f(U) \cap W \neq \emptyset$ and $\diam(f(U) \cap W) < \eps$. By continuity of $f$ the set $f^{-1}(W)$ is open and $V \coloneqq f^{-1}(W) \cap U$ will be the {non-empty} open subset $V \subset U$   we need.

The statements (ii), (iii) and (iv) are routine.
\end{proof}

\begin{defi} \label{def_gam-flat-op}
   Let $X$, $Y$ be Banach spaces and $\G \subset Y^*$. An operator   $T \in L(X,Y)$   is said to be \emph{$\G$-flat}, if $T^*|_\G\colon (\G, w^*) \to (X^*, \norm{\cdot}_{X^*})$ is openly fragmented. In other words, for every  $w^*$-open subset $U \subset Y^*$ with $U \cap \G \neq \emptyset$ and every  $\eps > 0$ there exists  a  $w^*$-open subset $V \subset U$ with $V \cap \G \neq \emptyset$ such that $\diam (T^*(V \cap \G)) < \eps$.  The set of all $\G$-flat operators in $L(X,Y)$ will be denoted by  $\Fl(X,Y)$.
\end{defi}

Statements (iii) and (iv) of the previous theorem imply that $\Fl(X,Y)$ is a linear subspace of $L(X,Y)$. Let us list some examples of $\G$-flat operators.

\begin{exa}\label{example-asplund-are-G-Flat}
   Every Asplund operator $T \in L(X,Y)$ is $\G$-flat for every $\G \subset B_{Y^*}$. This follows from Remark \ref{Aspl-frag} and Theorem \ref{theo:op-fragm-comp},  (i).
\end{exa}

\begin{exa}\label{example-NFragmented-are-G-Flat}
   If  $(\G, w^*) \subset {Y^*}$ is norm fragmented, then every bounded operator in $L(X,Y)$ is $\G$-flat (Theorem \ref{theo:op-fragm-comp},  (ii)). In particular, we have the next concrete example.
\end{exa}

\begin{exa}\label{example-Discrete-are-G-Flat}
   If  $(\G, w^*) \subset {Y^*}$ is discrete,  then every operator $T \in L(X,Y)$ is $\G$-flat.
\end{exa}

The notion of $\G$-flat generalizes the property of Asplund operators that allowed to prove \cite[Lemma 2.3]{cas-alt23}. The immediate generalization of that lemma is the following result:

\begin{lem}[Basic Lemma]\label{ACK-Lem}
   Let $X, Y$ be Banach spaces, $\Gamma\subset B_{Y^*}$ be a $1$-norming set,  $T \in \Fl(X,Y)$ be a $\G$-flat operator with $\norm{T}=1$ , $0<\eps<2/3$, and $x_0\in S_X$ be such that $\norm{Tx_0} > 1-\eps$. Then for every $r>0$ and for every $k \in [\frac{\eps}{2(1-\eps)}, 1)$ there exist:
   \begin{enumerate}
      \item a $w^*$-open set $U_r \subset Y^*$ with $U_r \cap \G \neq\emptyset$, and
      \item points $x_r^*\in S_{X^*}$ and $u_r\in S_X$ with $|x_r^*(u_r)|=1$ such that
         \begin{equation}\label{eq:newACKlemma}
            \norm{x_0-u_r}\leq \frac{\eps}{k} \ \ \ \text{and} \ \ \
            \norm{T^*z^* -x_r^*}\leq  r + 2k \ \text{ for every } z^*\in U_r \cap \G.
         \end{equation}
   \end{enumerate}
\end{lem}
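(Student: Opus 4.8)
The plan is to separate the argument into two essentially independent parts: a \emph{localisation step}, in which the $1$-norming property of $\G$ and the $\G$-flatness of $T$ are used to isolate a small $w^*$-open set $U_r$ and a single functional $f_0$ of the form $T^*z_0^*$ at which $x_0$ almost attains its norm, and a \emph{Bishop--Phelps--Bollob\'as step}, in which $u_r$ and $x_r^*$ are extracted from that single functional. Only the first part uses anything about $Y$, $\G$ or $T$; the second is a statement about a functional on $X$.

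\emph{Localisation.} Since $\G$ is $1$-norming and $\norm{Tx_0}>1-\eps$, the $w^*$-open set $W:=\{z^*\in Y^*:\ |z^*(Tx_0)|>1-\eps\}$ meets $\G$. Apply Definition \ref{def_gam-flat-op} to the pair $W$, $r$: there is a $w^*$-open $U_r\subset W$ with $U_r\cap\G\neq\emptyset$ and $\diam\bigl(T^*(U_r\cap\G)\bigr)<r$. Fix $z_0^*\in U_r\cap\G$ and put $f_0:=T^*z_0^*$. Then $\norm{f_0}\le\norm{T}=1$, $|f_0(x_0)|=|z_0^*(Tx_0)|>1-\eps$, and $\norm{T^*z^*-f_0}<r$ for all $z^*\in U_r\cap\G$. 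Hence it is enough to produce $u_r\in S_X$ and $x_r^*\in S_{X^*}$ with $|x_r^*(u_r)|=1$, $\norm{x_0-u_r}\le\eps/k$ and $\norm{f_0-x_r^*}\le 2k$; for then $\norm{T^*z^*-x_r^*}\le\norm{T^*z^*-f_0}+\norm{f_0-x_r^*}<r+2k$ for every $z^*\in U_r\cap\G$, which is \eqref{eq:newACKlemma}.

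\emph{Bishop--Phelps--Bollob\'as step.} After multiplying $f_0$ by a scalar of modulus one (and undoing this at the end, which changes neither $\norm{f_0-x_r^*}$ nor $|x_r^*(u_r)|$), assume $f_0(x_0)=|f_0(x_0)|>1-\eps$ is real; note $\norm{f_0}>1-\eps$. Order $B_X$ by the Bishop--Phelps cone $K:=\{w\in X:\ \re f_0(w)\ge k\norm{w}\}$, i.e.\ $z\preceq z'$ iff $z'-z\in K$; along any $\preceq$-chain one has $k\norm{z'-z}\le\re f_0(z')-\re f_0(z)$ with $\re f_0$ bounded, so chains are norm-Cauchy and Zorn's lemma yields a $\preceq$-maximal $u_r$ in the closed set $\{z\in B_X:\ x_0\preceq z\}$. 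From $x_0\preceq u_r$ and $\norm{f_0}\le1$ we get $k\norm{u_r-x_0}\le\re f_0(u_r)-\re f_0(x_0)\le 1-(1-\eps)=\eps$, hence $\norm{x_0-u_r}\le\eps/k$. One checks that $u_r\in S_X$ (if $\norm{f_0}>k$ this is the standard interior-point argument; the remaining case $\norm{f_0}\le k$, which forces $k>1-\eps$, is treated directly, e.g.\ with $u_r=x_0$). Maximality of $u_r$ says $\re f_0(z)\le\re f_0(u_r)+k\norm{z-u_r}$ for all $z\in B_X$; separating $B_X$ from $u_r+\interior K$ by a hyperplane (equivalently, from the subdifferential identity $0\in -f_0+kB_{X^*}+N_{B_X}(u_r)$, where $N_{B_X}(u_r)$ is the normal cone to $B_X$ at $u_r$) produces $\psi\in N_{B_X}(u_r)$ with $\norm{f_0-\psi}\le k$. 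Then $x_r^*:=\psi/\norm{\psi}\in S_{X^*}$ attains its norm at $u_r$, so $|x_r^*(u_r)|=1$; and a short computation passing from $\norm{f_0-\psi}\le k$ to $\norm{f_0-x_r^*}\le 2k$, which uses $\norm{f_0}>1-\eps$ together with $0<\eps<2/3$ and $k\ge\frac{\eps}{2(1-\eps)}$, completes the proof.

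\emph{Where the difficulty lies.} The localisation step is routine once the correct $w^*$-open set $W$ is written down. The real work is the last passage from $\norm{f_0-\psi}\le k$ to $\norm{f_0-x_r^*}\le 2k$: because $f_0=T^*z_0^*$ need not have norm exactly $1$, normalising $\psi$ perturbs the functional, and one must show this perturbation stays within budget; this is exactly where the numerical hypotheses on $\eps$ and $k$ are consumed, and it is also where the degenerate case $\norm{f_0}\le k$ has to be disposed of by a separate, slightly delicate estimate. (An alternative to the cone/Zorn construction of $u_r$ is Ekeland's variational principle applied to $z\mapsto-\re f_0(z)$ on $B_X$ with step $\eps/k$, which gives the maximality inequality directly.)
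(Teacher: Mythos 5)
Your localisation step is exactly the paper's: the set $W=\{z^*:|z^*(Tx_0)|>1-\eps\}$ meets $\G$ by the $1$-norming property, $\G$-flatness gives $U_r$ with $\diam(T^*(U_r\cap\G))<r$, and everything reduces to a Bishop--Phelps--Bollob\'as statement for the single functional $f_0=T^*z_0^*$ with $\norm{f_0}\le 1$ and $|f_0(x_0)|>1-\eps$. The paper isolates that statement as a separate proposition and proves it by normalising $f_0$ and invoking Phelps' Corollary~2.2 for the \emph{unit} functional $f_0/\norm{f_0}$, whereas you re-run the Bishop--Phelps cone construction directly on the unnormalised $f_0$. That is a legitimate alternative route in principle, but as written it has a genuine gap exactly at the point you defer to ``a short computation''.

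The problem is quantitative. Your cone has parameter $k$, so the subdifferential/separation step yields $\psi\in N_{B_X}(u_r)$ with $\norm{f_0-\psi}\le k$ and $\norm{\psi}=\psi(u_r)\ge f_0(u_r)-k>1-\eps-k$; normalising then gives only
\begin{equation*}
\norm{f_0-x_r^*}\le \norm{f_0-\psi}+\bigl|\norm{\psi}-1\bigr|\le k+\bigl(1-\norm{f_0}+k\bigr)\le 2k+\eps ,
\end{equation*}
and the extra $\eps$ (which is exactly the defect $1-\norm{f_0}$ of the unnormalised functional) cannot be absorbed into $2k$ by the hypotheses $\eps<2/3$ and $k\ge\frac{\eps}{2(1-\eps)}$ --- those give $2k>\eps$, not $2k\ge 2k+\eps$. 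The paper's Proposition~\ref{SharpBPB} closes precisely this gap by a trade-off you are missing: it applies the cone argument not with parameter $k$ but with the strictly smaller $k_0=k\eta/\eps$, where $\eta=1-(1-\eps)\norm{f_0}^{-1}\le\eps$ measures the actual slack. This still yields $\norm{x_0-u_r}\le\eta/k_0=\eps/k$, but the functional error becomes $(1-\norm{f_0})+2k_0=\psi(\norm{f_0})$ for $\psi(t)=1-t+\frac{2k}{\eps}\bigl(1-\frac{1-\eps}{t}\bigr)$, and the hypothesis $k\ge\frac{\eps}{2(1-\eps)}$ is consumed in showing $\psi$ is increasing on $(0,1]$, so that $\psi(\norm{f_0})\le\psi(1)=2k$. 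Without shrinking the cone parameter as a function of $\norm{f_0}$, the target bound $2k$ is not reached. Your degenerate case $\norm{f_0}\le k$ is likewise not disposed of by taking $u_r=x_0$: you would then need a norming functional $x_r^*$ of $x_0$ with $\norm{f_0-x_r^*}\le 2k$, which is not automatic; in the paper's scheme this case never arises because Phelps' result is always applied to a unit functional with parameter $k_0\in(0,1)$. To repair your argument, replace the fixed cone parameter $k$ by $k_0=k\eta/\eps$ applied to $f_0/\norm{f_0}$ and carry out the monotonicity estimate --- at which point you have reproduced the paper's proof.
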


The proof of this fact is a modification of that of {\cite[Lemma 2.3]{cas-alt23}}. First, we use the following fact:

\begin{pro}[{\cite[Corollary~2.2]{Phelps}}] \label{Corollary2.2Phelps}
   Let $X$ be a real Banach space, $z^*\in S_{X^*}$, $z\in S_X$, $\eta > 0$ and $z^*(z)\geq 1-\eta$. Then for {every} $k\in (0,1)$ there exist $y^*\in S_{X^*}$ and $u\in S_X$ such that
   \begin{equation*}
      y^*(u) = 1,\qquad \norm{z-u} \le \frac{\eta}{k}, \qquad \norm{z^*-{y}^*} \le  2k.
   \end{equation*}
\end{pro}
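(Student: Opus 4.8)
The plan is to follow the classical Bishop--Phelps support-cone method (which is exactly the setting of \cite{Phelps}): I would produce the support point $u$ by a maximality argument, obtain the supporting functional $y^*$ by Hahn--Banach separation, and then squeeze out the sharp constant $2k$ at the very end. Since $X$ is real I use $\norm{f}=\sup_{\norm{x}\le 1}f(x)$ for $f\in X^*$ throughout.

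\emph{Step 1 (support cone and maximal point).} Fix $k\in(0,1)$ and introduce the closed convex cone $C_k=\{x\in X:z^*(x)\ge k\norm{x}\}$, which is pointed because $\norm{z^*}=1$. Order $X$ by $x\preceq y\iff y-x\in C_k$ and set $P=\{x\in B_X:x\succeq z\}$. Along any $\preceq$-chain the values $z^*(\cdot)$ increase and are bounded by $1$, while $k\norm{x-y}\le z^*(x)-z^*(y)$ forces the chain to be Cauchy; completeness of $B_X$ then supplies an upper bound inside $P$ (the limit lies in $B_X$ and in $P$ since both $C_k$ and $B_X$ are closed). Zorn's lemma yields a $\preceq$-maximal element $u\in P$.

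\emph{Step 2 (geometry of $u$).} From $u\succeq z$ I get $z^*(u)\ge z^*(z)\ge 1-\eta$ and $k\norm{u-z}\le z^*(u)-z^*(z)\le \eta$, hence $\norm{u-z}\le \eta/k$. Maximality means $(u+C_k)\cap B_X=\{u\}$; since $\norm{z^*}=1>k$ there is $h$ with $z^*(h)>k\norm{h}$, so if $\norm{u}<1$ the points $u+th$ for small $t>0$ would lie in $P$ and beat $u$, a contradiction. Therefore $u\in S_X$ and $u+C_k$ misses the open ball $\interior B_X$. Separating the disjoint convex sets $u+C_k$ and $\interior B_X$ by Hahn--Banach and normalizing produces $y^*\in S_{X^*}$ with $y^*(u)=1$ and $y^*\ge 0$ on $C_k$.

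\emph{Step 3 (the crux: sharp constant).} The real work is converting the dual-cone membership ``$y^*\ge 0$ on $C_k$'' into a quantitative decomposition of $z^*$. Writing $p(x)=k\norm{x}-z^*(x)$, so $C_k=\{p\le 0\}$, and using the Slater point $h$ with $p(h)<0$, a Farkas/Lagrangian argument gives $\mu>0$ with $y^*(x)+\mu p(x)\ge 0$ for all $x$, which is exactly $\norm{y^*-\mu z^*}\le \mu k$. Setting $\beta=1/\mu$ and $g=z^*-\beta y^*$ then yields $z^*=\beta y^*+g$ with $\beta>0$ and $\norm{g}\le k$. The sharp bound now comes purely from unit-norm bookkeeping: $|1-\beta|=\bigl|\norm{z^*}-\beta\norm{y^*}\bigr|\le \norm{z^*-\beta y^*}=\norm{g}\le k$, whence $\norm{z^*-y^*}\le |1-\beta|+\norm{g}\le 2k$. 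I expect this last step --- extracting $z^*=\beta y^*+g$ with $\norm{g}\le k$ (rather than the weaker $\norm{g}\le \mu k$ that a careless estimate produces) and then reading off $|1-\beta|\le\norm{g}$ --- to be the only genuinely delicate point, everything else being routine. An alternative to Steps~1--2 is to obtain $u$ directly from Ekeland's variational principle applied to $-z^*$ on $B_X$ with parameters $\eps=\eta$ and $\lambda=\eta/k$, which produces the same inequality $z^*(w-u)\le k\norm{w-u}$ for $w\in B_X$ that feeds into Step~3.
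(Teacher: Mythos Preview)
The paper does not prove this proposition at all: it is quoted verbatim as \cite[Corollary~2.2]{Phelps} and used as a black box in the proof of Proposition~\ref{SharpBPB}. So there is no ``paper's own proof'' to compare against; what you have written is essentially a reconstruction of Phelps' original support-cone argument, and as such it is correct.

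A few remarks on your Step~3, which you rightly flag as the only delicate point. The ``Farkas/Lagrangian'' incantation is a little loose in infinite dimensions (the constrained problem $\inf_{p\le 0}(-y^*)$ has value $-\infty$, so strong duality is not the right framing). The clean way to obtain the decomposition is the one Phelps actually uses: from $y^*\ge 0$ on $C_k$ one first checks that $|z^*(x)|\le k\norm{x}$ for every $x\in\ker y^*$ (approximate $x$ by $x\pm\tfrac{1}{n}w$ with $y^*(w)\neq 0$ and pass to the limit), then Hahn--Banach extends $z^*|_{\ker y^*}$ to some $g\in X^*$ with $\norm{g}\le k$, so that $z^*-g$ vanishes on $\ker y^*$ and hence $z^*=\beta y^*+g$ for some scalar $\beta$. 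The positivity of $\beta$ follows by evaluating at your Slater point $h$ with $z^*(h)>k\norm{h}$: if $\beta\le 0$ then $z^*(h)=\beta y^*(h)+g(h)\le k\norm{h}$, a contradiction. After that your bookkeeping $|1-\beta|\le\norm{g}\le k$ and $\norm{z^*-y^*}\le|1-\beta|+\norm{g}\le 2k$ is exactly right. Your Ekeland alternative for Steps~1--2 is also valid and is the route taken in more modern treatments of the Bishop--Phelps--Bollob\'as theorem.
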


In the next proposition, we relax the condition $z^*\in S_X$ allowing $\norm{z^*}$ to be smaller than $1$. Note that $x^*$ plays the role of $z^*$.

\begin{pro}\label{SharpBPB}
   Let $X$ be a  Banach space,  $\eps\in (0,2/3)$, $x\in S_X, x^*\in B_{X^*}$ and $|x^*(x)| \geq 1- \eps$. Then, for {every}  $k \in [\frac{\eps}{2(1-\eps)}, 1)$  there exist ${y}^*\in S_{X^*}$ and ${u} \in S_X$ such that
   \begin{equation*}
      |{y}^*({u})| = 1,\qquad \norm{x-{u}} \le \frac{\eps}{k}, \qquad \norm{x^*-{y}^*} \le  2k.
   \end{equation*}
\end{pro}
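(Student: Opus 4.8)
The plan is to deduce the statement from Phelps' Proposition~\ref{Corollary2.2Phelps}, the point being to replace $x^*$ by the normalised functional $z^*:=x^*/\norm{x^*}$ and to absorb the error $\norm{x^*-z^*}=1-\norm{x^*}$ incurred by this normalisation into a slightly diminished parameter when invoking Phelps' result. Note first that $\norm{x^*}\ge|x^*(x)|\ge 1-\eps$, so $t:=1-\norm{x^*}\in[0,\eps]$.

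I would first dispose of the scalar field. If $\K=\C$, choose $\lambda$ with $|\lambda|=1$ and $\lambda x^*(x)=|x^*(x)|\ge 0$ and replace $x^*$ by $\lambda x^*$; this affects neither $\norm{x^*}$, nor $|x^*(x)|$, nor (after multiplying the resulting $y^*$ by $\bar\lambda$) the conclusion. Then one passes to the real Banach space underlying $X$ together with the real-linear functional $\re x^*$, which has the same norm as $x^*$ and the same value $x^*(x)$ at $x$, and at the end one rebuilds a complex functional from a real one via $f(\cdot)=h(\cdot)-i\,h(i\cdot)$, using that this correspondence is isometric and preserves real parts. Thus it suffices to treat the case where $X$ is real and $x^*(x)\ge 1-\eps$ (replacing $x^*$ by $-x^*$ if needed).

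In the real case $z^*(x)=x^*(x)/\norm{x^*}\in[1-\eps,1]$. If $z^*(x)=1$ one just takes $u:=x$, $y^*:=z^*$, noting $\norm{x^*-y^*}=t\le\eps<2k$ since $k\ge\tfrac{\eps}{2(1-\eps)}$. Otherwise set $\eta:=1-z^*(x)\in(0,\eps]$ and apply Proposition~\ref{Corollary2.2Phelps} to $z^*$, $x$, $\eta$ with the parameter $k':=k-t/2$, which lies in $(0,1)$ because $k'<k<1$ and $k'\ge\tfrac{\eps}{2(1-\eps)}-\tfrac{\eps}{2}=\tfrac{\eps^2}{2(1-\eps)}>0$. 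This produces $y^*\in S_{X^*}$ and $u\in S_X$ with $|y^*(u)|=1$, $\norm{x-u}\le\eta/k'$ and $\norm{z^*-y^*}\le 2k'$, whence $\norm{x^*-y^*}\le\norm{x^*-z^*}+\norm{z^*-y^*}=t+2k'=2k$. It remains to check $\eta/k'\le\eps/k$: writing $x^*(x)=1-s$ with $t\le s\le\eps$ one gets $\eta=(s-t)/(1-t)\le(\eps-t)/(1-t)=\eps-t(1-\eps)/(1-t)$, while $\eps k'/k=\eps-\eps t/(2k)$, so the inequality reduces to $\eps(1-t)\le 2k(1-\eps)$, which holds since $\eps(1-t)\le\eps\le 2k(1-\eps)$ by the hypothesis on $k$.

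The only genuinely non-routine step is spotting the compensating choice $k'=k-(1-\norm{x^*})/2$: it is rigged so that the loss $1-\norm{x^*}$ from normalising $x^*$ is exactly recovered inside the target bound $\norm{x^*-y^*}\le 2k$, and the verification that, after this trade-off, the companion estimate $\norm{x-u}\le\eps/k$ still survives is precisely where the assumption $k\ge\tfrac{\eps}{2(1-\eps)}$ is consumed (and, through it, where $\eps<2/3$ enters, guaranteeing the relevant interval is non-empty). The scalar-field reduction is standard bookkeeping.
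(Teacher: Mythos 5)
Your proof is correct and follows essentially the same route as the paper: normalize $x^*$ to $z^*=x^*/\norm{x^*}$, apply Proposition~\ref{Corollary2.2Phelps} with a suitably shrunken parameter, and close with the triangle inequality, with the hypothesis $k\ge\frac{\eps}{2(1-\eps)}$ consumed in exactly the same way. The only (harmless) difference is dual bookkeeping: the paper chooses $k_0=k\eta/\eps$ so that the estimate $\norm{x-u}\le\eps/k$ is exact and then checks $\norm{x^*-y^*}\le 2k$ via the monotonicity of an auxiliary function, whereas you choose $k'=k-(1-\norm{x^*})/2$ so that $\norm{x^*-y^*}\le 2k$ is exact and then check $\norm{x-u}\le\eps/k$ algebraically (and you treat the complex-scalar reduction more explicitly than the paper's ``without loss of generality'').
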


\begin{proof}
   Without loss of generality we can assume that $x^*(x)\geq 1-\eps$. Then $\norm{x^*} \ge 1 - \eps$. Set $z^*\coloneqq x^*/\norm{x^*}, z\coloneqq x$. Then $z^*(z)\geq 1- \eta$ for
   $\eta=1-(1-\eps)\norm{x^*}^{-1}\in [0, \eps]$. If $\eta = 0$, then $z^*(z) = 1$, so we can take $y^*=z^*$ and $u = x$, which satisfy the inequalities we want. So we may assume that $0 < \eta \le \eps$. Set
   $k_0\coloneqq \frac{k\eta}{\eps} \in (0, 1)$. So, according to Proposition \ref{Corollary2.2Phelps}, there exist $y^*\in S_{X^*}$ and $u\in S_X$ such that
   \begin{equation*}
      y^*(u) = 1,\qquad \norm{z-u} \le \frac{\eta}{k_0}, \qquad \norm{z^*-{y}^*} \le  2k_0.
   \end{equation*}

   Therefore, $\norm{x-u}\leq \eta/k_0=\eps/k$. Also, we have
      \begin{align*}
         \norm{x^*-y^*} &\leq \norm{x^*-z^*}+\norm{z^*-y^*}\leq \norm{x^*-\frac{x^*}{\norm{x^*}}} + 2k_0\\
                        &= 1-\norm{x^*} +2k_0 = 1- \norm{x^*} + \frac{2k}{\eps}  \left( 1- \frac{1-\eps}{\norm{x^*}}\right).
      \end{align*}
   Observe that the function $\psi(t)= 1- t + \frac{2k}{\eps}( 1- \frac{1-\eps}{t} )$ is  increasing when $t\in \left(0, \sqrt{\frac{2k(1-\eps)}{\eps}}\right)$. So, if $k\geq \frac{\eps}{2(1-\eps)}$, we have $\psi(\norm{x^*})\leq \psi(1)=2k$. In this case, we get our conclusion.
\end{proof}

\begin{proof}[Proof of Lemma \ref{ACK-Lem}]
   Use that $\Gamma \subset B_{Y^*}$ is 1-norming and pick $y_0^* \in \Gamma$ such that
   \[
      |T^*(y_0^*)(x_0)|=|y_0^*(Tx_0)|> 1-\eps.
   \]
   Set $U\coloneqq \{ y^*\in Y^* : |T^*y^*(x_0)| > 1-\eps\}$. We have that $y_0^* \in U \cap \G \subset B_{Y^*}$. Since $U$ is $w^*$-open in $Y^*$ and  $U \cap \G \neq \emptyset$, according to Definition \ref{def_gam-flat-op}, for every $r>0$ there exists  a  $w^*$-open subset $U_r \subset U$ with $U_r \cap \G \neq \emptyset$ such that $\diam (T^*(U_r \cap \G)) < r$.

   Fix some $y_1^*\in U_r \cap \G$ and set $x_1^*=T^*y_1^*$. Then, $1\geq \norm{x_1^*} \geq |x_1^*(x_0)| > 1-\eps$ which, by applying Proposition \ref{SharpBPB} {to} any $\frac{\eps}{2(1-\eps)}\leq k< 1$, gives $x^*_r\in S_{X^*}$ and $u_r\in S_X$ with $|x_r^*(u_r)|=1$ and such that
   \[
      \norm{x_0-u_r}\leq\frac{\eps}{k}\quad \text{ and }\quad \norm{x_1^*-x_r^*}\leq 2k.
   \]
   Finally, let $z^*\in U_r\cap \G$ be arbitrary. Then,
   \begin{align*}
      \norm{T^*z^*-x_r^*}\leq \norm{T^*z^*-x_1^*}+\norm{x_1^* - x_r^*}\le r+2k,
   \end{align*}
   which finishes the proof.
\end{proof}


\section{The $\ACK$ structure}

In the definition below we extract the structural properties of $C(K)$ and its uniform subalgebras that were essential in the proof of \cite[Th. 3.6]{CasGuiKad}. The name ``$\ACK$ structure'' comes from the words ``Asplund'' and ``$C(K)$''.

\begin{defi} \label{def_ACK}
Let $X$ be a Banach space and $\mathcal{O}$ be a non-emtpy subset of $L(X)$. We will say that $X$ has \emph{$\mathcal{O}$-ACK structure with parameter $\rho$}, for some $\rho\in[0,1)$ ($X\in\mathcal{O}$-$\ACK_\rho$, for short) whenever there exists a $1$-norming set $\G\subset B_{X^*}$ such that for every $\eps>0$ and every non-empty relatively $w^*$-open subset $U\subset\G$ there exist a non-empty subset $V\subset U$, vectors $x_1^*\in V$, $e\in S_X$ and an operator $F\in\mathcal{O}$ with the following properties:

{\renewcommand{\labelenumi}{\theenumi}
\renewcommand{\theenumi}{(\Roman{enumi})}%
\begin{enumerate}
\item $\norm{Fe} = \norm{F} = 1$;
\item $x_1^*(Fe) = 1$;
\item $F^*x_1^* = x_1^*$;
\item denoting $V_1 = \{x^* \in \G :  \norm{F^*x^*} + (1 - \eps)\norm{(I_{X^*} - F^*)(x^*)} \le 1 \}$, then  $|v^*(Fe)| \le  \rho$ for every $x^* \in \G \setminus V_1$;
\item $\dist(F^*x^*, \aconv\{0, V\}) < \eps$ for every $x^* \in \G$; and \item  $|v^*(e) - 1| \le  \eps$ for every $v^* \in V$.
\end{enumerate}}

The Banach space $X$ is said to have \emph{simple $\mathcal{O}$-ACK structure}  ($X \in \mathcal{O}$-$\ACK$) if $V_1 = \G$. In other words, for $X \in \mathcal{O}$-$\ACK$ the above definition holds true with the following modification:  the property (IV) becomes
\begin{enumerate}
\item[(IV)']  $ \norm{F^*x^*}  + (1 - \eps)\norm{(I_{X^*} - F^*)(x^*)} \le 1$  for every $x^* \in \G$.
\end{enumerate}
In case of $\mathcal{O}=L(X)$, we will simply {say} $\ACK_\rho$ (and simple $ACK$) structure.
\end{defi}

\begin{rem}\label{rem:ACK-rho-inheritance}
If $X$ belongs to the class $\ACK_\rho$, then $X$ also belongs to $\ACK_\sigma$ for every $\sigma\in[\rho, 1)$. Moreover,  $\ACK\subset\ACK_\rho$ for every  $\rho \in [0, 1)$.
\end{rem}

\begin{defi} \label{def_Gflat-ideal}
A linear subspace  $\mathcal I \subset L(X,Y)$ is said to be a \emph{ $\G$-flat ideal}, if  all elements of  $\mathcal I$ are  $\G$-flat operators,  $\mathcal I$ contains all operators of finite rank, and for every $T \in \mathcal I$ and every $F \in L(Y)$ their composition $F \circ T$ belongs to $\mathcal I$.
 \end{defi}

Observe that the subspace of Asplund operators in $L(X,Y)$ is an example of $\G$-flat ideal. The theorem below motivates the above definition.

\begin{theo}\label{theo:thetheoremainFAnewnew}
Let $X$ be a Banach space,  $Y \in \ACK_\rho$ , $\G \subset Y^*$ be the corresponding $1$-norming set from Defintion~\ref{def_ACK} and $T\in L(X,Y)$ be a $\Gamma$-flat operator with $\norm{T}=1$. Let $0 < \eps \le 1/2$ and let $x_0\in S_X$ be such that $\norm{Tx_0} > 1-\eps$. Then  there exist $u_0\in S_X$ and an operator $S\in S_{L(X,Y)}$ with $\norm{Su_0}=1$ such that
\begin{equation*}
\max \left\{\norm{x_0-u_0}, \norm{T-S}\right\} <  \sqrt{2\eps}\left( 1 + \frac{2}{1-\rho+\sqrt{2\eps}} \right).
\end{equation*}
Moreover, if  $Y\in\ACK$ then the estimate can be improved to
\begin{equation*} \label{eq:YinACK}
\max \left\{\norm{x_0-u_0}, \norm{T-S}\right\} <  \sqrt{2\eps}.
\end{equation*}
Additionally, $S$ can be chosen from $\mathcal I$ whenever $T$ belongs to a $\G$-flat ideal $\mathcal I$. In particular, every  $Y \in \ACK_\rho$ ($\ACK$) has the  A-BPBp.
\end{theo}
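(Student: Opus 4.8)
The plan is to combine the Basic Lemma (Lemma~\ref{ACK-Lem}) with the $\ACK_\rho$ structure of $Y$, following the architecture of the proof of \cite[Th.~3.6]{CasGuiKad} but now in the abstract setting. First I would set $\eta \coloneqq \sqrt{2\eps}$ (or a comparable small parameter) and apply Lemma~\ref{ACK-Lem} to the $\G$-flat operator $T$ and the point $x_0$, choosing the free parameter $k$ in the admissible range $[\tfrac{\eps}{2(1-\eps)},1)$ roughly of order $\sqrt{\eps}$ and $r>0$ small. This produces a $w^*$-open set $U_r\subset Y^*$ with $U_r\cap\G\neq\emptyset$, a functional $x_r^*\in S_{X^*}$ and a vector $u_r\in S_X$ with $|x_r^*(u_r)|=1$, $\norm{x_0-u_r}\le \eps/k$, and $\norm{T^*z^*-x_r^*}\le r+2k$ for all $z^*\in U_r\cap\G$. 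Thus on the slice $U_r\cap\G$ the operator $T^*$ is essentially constant, equal to $x_r^*$ up to an error controlled by $\sqrt{\eps}$.

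Next I would feed the relatively $w^*$-open set $U\coloneqq U_r\cap\G\subset\G$ into Definition~\ref{def_ACK} with the same $\eps$ (or a rescaled one): this yields $V\subset U$, $x_1^*\in V$, $e\in S_X$ and $F\in\mathcal{O}\subset L(Y)$ satisfying (I)--(VI). The operator I want is
\[
S \coloneqq F\circ T + (1 - \norm{F\circ T})\,x_r^*(\cdot)\,Fe,
\]
or a suitably normalized variant — the rank-one correction term $x_r^*\otimes Fe$ is what forces norm attainment at $u_r$, since $x_r^*(u_r)$ has modulus $1$ and $\norm{Fe}=1$ by (I). One checks $\norm{S u_r}=1$ by evaluating: $S u_r = F T u_r + (1-\norm{FT}) x_r^*(u_r) Fe$, and because $T^* x_1^*$ is close to $x_r^*$ on the slice while $F^* x_1^* = x_1^*$ by (III) and $x_1^*(Fe)=1$ by (II), the two terms align in phase and add up in norm. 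Then $u_0 \coloneqq u_r$ works, and $\norm{x_0 - u_0}\le \eps/k \le \sqrt{2\eps}\cdot\text{(const)}$.

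The main work — and the main obstacle — is the estimate $\norm{T-S}<\sqrt{2\eps}\bigl(1+\tfrac{2}{1-\rho+\sqrt{2\eps}}\bigr)$. I would estimate $\norm{T-S}=\norm{T-S}=\sup_{y^*\in\G}\norm{T^*y^*-S^*y^*}$ using that $\G$ is $1$-norming for $L(X,Y)$ in the appropriate sense, and split $\G = V_1 \cup (\G\setminus V_1)$ as in property (IV). For $y^*\in V_1$ one uses $\norm{F^*y^*}+(1-\eps)\norm{(I-F^*)y^*}\le 1$ together with property (V), which says $\dist(F^*y^*,\aconv\{0,V\})<\eps$, to show $F^*y^* = \sum \lambda_i v_i^* + (\text{small})$ with $v_i^*\in V$ and $\sum|\lambda_i|\le 1$; since $T^*v_i^* \approx x_r^*$ for all $i$ by the Basic Lemma, $T^* F^* y^*$ is controlled, and one plays this against the rank-one term — here the denominator $1-\rho+\sqrt{2\eps}$ emerges from balancing $\norm{FT}\ge |x_1^*(FTe)|$ against the $\rho$-bound. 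For $y^*\in\G\setminus V_1$, property (IV) gives $|y^*(Fe)|\le\rho$, which kills the rank-one perturbation term up to $\rho$ and lets the $(1-\eps)$-factor in the definition of $V_1$ absorb the rest. Tracking all the triangle inequalities and choosing $k\sim\sqrt{\eps/2}$, $r\to 0$ optimally should produce exactly the claimed bound; in the simple-$\ACK$ case $V_1=\G$ so the second (worse) branch never occurs and the correction term is not needed at full strength, giving the cleaner estimate $\sqrt{2\eps}$. Finally, $S\in\mathcal I$ whenever $T\in\mathcal I$ because $\mathcal I$ is a $\G$-flat ideal: it is closed under composition with $F\in L(Y)$ and contains finite-rank operators, so $FT$ and the rank-one term both lie in $\mathcal I$; and the last sentence (A-BPBp) follows by taking $\mathcal I$ to be the Asplund operators and invoking Example~\ref{example-asplund-are-G-Flat}.
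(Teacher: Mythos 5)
Your overall skeleton (Basic Lemma to localize $T^*$ near $x_r^*$ on a $w^*$-slice of $\G$, then the $\ACK_\rho$ data $V$, $x_1^*$, $e$, $F$ on that slice, a rank-one-modified operator, the split of $\G$ into $V_1$ and $\G\setminus V_1$, and the ideal/A-BPBp remarks at the end) is the same as the paper's, which in fact routes everything through an intermediate lemma with free parameters $k$ and $\nu$ and then optimizes $k\approx\sqrt{\eps/2}$. But the operator you propose, $S=F\circ T+(1-\norm{F\circ T})\,x_r^*(\cdot)\,Fe$, is not the right one, and both properties you must establish for it break down. For exact norm attainment you would need $\norm{Su_r}=\norm{S}$; by the triangle inequality $\norm{Su_r}\le\norm{FTu_r}+(1-\norm{FT})$, so equality forces $\norm{FTu_r}=\norm{FT}$ (plus alignment with $Fe$), i.e.\ you need $FT$ to already attain its norm at $u_r$ --- which is circular. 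The Basic Lemma only gives $\norm{T^*x_1^*-x_r^*}\le r+2k$, so your ``phases align'' computation yields $|x_1^*(Su_r)|\ge 1-(r+2k)$ at best, an approximate statement, never the exact equality $\norm{Su_0}=1$ the theorem requires (and if $\norm{FT}=1$ your correction term vanishes altogether). Moreover, with your $S$ one has $T^*y^*-S^*y^*=T^*(I_{Y^*}-F^*)y^*-(1-\norm{FT})\,y^*(Fe)\,x_r^*$, and nothing in (I)--(VI) makes $(I_{Y^*}-F^*)y^*$ small: in the uniform-algebra model $F$ is multiplication by a peak function concentrated on a small set, so $FT$ is far from $T$ and $\norm{T-S}$ is of order $1$, not $O(\sqrt{\eps})$.

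The paper's operator is $S(x)=x_r^*(x)Fe+(1-\widetilde{\eps})(I_Y-F)Tx$, i.e.\ the roles of $F$ and $I_Y-F$ are the opposite of yours, and the rank-one term carries coefficient $1$, not $1-\norm{FT}$. Property (III), $F^*x_1^*=x_1^*$, makes the witnessing functional annihilate the $(I_Y-F)T$ part exactly, so $|x_1^*(Su_r)|=|x_r^*(u_r)|=1$ gives exact norm attainment once $\norm{S}\le 1$ is arranged; properties (V) and (VI) show that the discarded piece $T^*F^*y^*$ is within $O(\eps'+r+2k)$ of $y^*(Fe)x_r^*$, which is what makes $\norm{T-S}$ small; and the damping factor $\widetilde{\eps}=(3\eps'+r+2k)/(1-\rho+3\eps'+r+2k)$, forced by (IV) on $\G\setminus V_1$, is exactly where the factor $1+\frac{2}{1-\rho+\sqrt{2\eps}}$ in the statement comes from (with $\widetilde{\eps}=\eps'$ sufficing in the simple $\ACK$ case). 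Without replacing your operator by one of this form, your outline cannot be completed.
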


Before proving the theorem, we need a preliminary result.
\begin{lem}\label{theo:thetheoremainFAnew}
Under the conditions of Theorem \ref{theo:thetheoremainFAnewnew} above, for every $k \in \left(\eps/(2(1-\eps)), 1\right)$ and for every
\[
\nu >  2k\left( 1 + \frac{2}{1-\rho+2k} \right),
\]
there exist $u_0\in S_X$ and $S\in S_{L(X,Y)}$ satisfying $\norm{Su_0}=1$, $\norm{x_0-u_0}\leq\frac{\eps}{k}$ and $ \norm{T-S} <   \nu$. In the case of $Y \in \ACK$ the same is true for every $\nu > 2k$.

If, moreover, $T$ belongs to a $\G$-flat ideal $\mathcal I$, then $S$ can be chosen from $\mathcal I$ as well.
\end{lem}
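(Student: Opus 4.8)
The plan is to run the Basic Lemma, feed its output into the $\ACK_\rho$ structure of $Y$, and then write $S$ down explicitly as a compression of $T$ by $I_Y-F$ together with a rank--one correction.

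First I would apply Lemma~\ref{ACK-Lem}; this is legitimate because the set $\G$ attached to $Y$ by Definition~\ref{def_ACK} lies in $B_{Y^*}$ and is $1$-norming, $T$ is $\G$-flat with $\norm{T}=1$, $\norm{Tx_0}>1-\eps$, and $0<\eps\le 1/2<2/3$. For the given $k$ and any (small, to be fixed later) $r>0$ this produces a $w^*$-open $U_r$ with $U_r\cap\G\neq\emptyset$ and points $x_r^*\in S_{X^*}$, $u_r\in S_X$ with $|x_r^*(u_r)|=1$, $\norm{x_0-u_r}\le\eps/k$ and $\norm{T^*z^*-x_r^*}\le r+2k$ for all $z^*\in U_r\cap\G$; after multiplying $x_r^*$ by a unimodular scalar we may take $x_r^*(u_r)=1$, and $u_0:=u_r$ will be the output vector. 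Since $U:=U_r\cap\G$ is a non-empty relatively $w^*$-open subset of $\G$, for a small auxiliary $\eta>0$ the $\ACK_\rho$ structure of $Y$ yields $V\subset U$, $x_1^*\in V$, $e\in S_Y$ and $F\in L(Y)$ satisfying (I)--(VI); note that $V\subset U_r\cap\G$, so $\norm{T^*v^*-x_r^*}\le r+2k$ for all $v^*\in V$, and $\norm{x_1^*}=1$ by (II).

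Next I set $S_0x:=(I_Y-F)(Tx)+\theta\,x_r^*(x)\,Fe$ for a scalar $\theta\ge 1$ to be fixed, and (once $\norm{S_0}=\theta$ is established below) $S:=S_0/\theta$. As $S_0$ is the sum of $(I_Y-F)\circ T$ with a finite-rank operator, it — hence $S$ — lies in any $\G$-flat ideal $\mathcal I$ containing $T$ (such $\mathcal I$ is a subspace, contains the finite-rank operators, and is stable under post-composition with $L(Y)$); this gives the last assertion. For norm attainment: by (III) the functional $x_1^*\circ(I_Y-F)=x_1^*-F^*x_1^*$ vanishes, so $x_1^*\big((I_Y-F)(Tu_r)\big)=0$ and, using $x_r^*(u_r)=1$ and (II), $x_1^*(S_0u_r)=\theta\,x_1^*(Fe)=\theta$; since $\norm{x_1^*}=1$ this gives $\norm{S_0u_r}\ge\theta$. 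Hence, once I prove $\norm{S_0^*z^*}\le\theta$ for all $z^*\in\G$, the $1$-norming property forces $\norm{S_0}=\theta=\norm{S_0u_r}$, so $S=S_0/\theta$ has norm $1$, attains it at $u_0=u_r$, and $\norm{x_0-u_0}\le\eps/k$.

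Everything then reduces to the estimates $\norm{S_0^*z^*}\le\theta$ on $\G$ and $\norm{T-S}<\nu$, for which one splits $\G$ by membership in $V_1$. Writing $S_0^*z^*=T^*(I_{Y^*}-F^*)z^*+\theta\,z^*(Fe)\,x_r^*$ and using $|z^*(Fe)|=|(F^*z^*)(e)|\le\norm{F^*z^*}$, for $z^*\in V_1$ the inequality defining $V_1$ yields $\norm{S_0^*z^*}\le\frac{1-\norm{F^*z^*}}{1-\eta}+\theta\norm{F^*z^*}\le\theta$ provided $\theta\ge\frac1{1-\eta}$. For $z^*\notin V_1$ (vacuous when $Y\in\ACK$), (V) lets one pick $w^*=\sum_i\alpha_iv_i^*\in\aconv\{0,V\}$ with $\norm{F^*z^*-w^*}<\eta$; since each $v_i^*\in U_r\cap\G$, the Basic Lemma gives $\norm{T^*w^*-c\,x_r^*}\le r+2k$ with $c=\sum_i\alpha_i$, while (IV) and (VI) give $|c|\le\rho+2\eta$. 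The crucial observation is that the two $F$-dependent terms of $S_0^*z^*$, namely $-T^*F^*z^*$ and $\theta\,z^*(Fe)\,x_r^*$, are each within $O(r+2k+\eta)$ of a scalar near $c$ times $x_r^*$, so their sum has norm at most $(\theta-1)|c|+O(r+2k+\eta)\le(\theta-1)\rho+O(r+2k+\eta)$; thus $\norm{S_0^*z^*}\le\norm{T^*z^*}+(\theta-1)\rho+O(r+2k+\eta)\le 1+(\theta-1)\rho+O(r+2k+\eta)$, which is $\le\theta$ as soon as $\theta\ge 1+\frac{O(r+2k+\eta)}{1-\rho}$. So I fix $\theta:=1+\frac{C(r+2k+\eta)}{1-\rho}$ with $C$ absorbing the implied constants (and $\theta:=\frac1{1-\eta}$ if $Y\in\ACK$), meeting both requirements. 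Finally $\norm{T-S}=\frac1\theta\norm{\theta T-S_0}$, and $(\theta T-S_0)^*z^*=(\theta-1)T^*z^*+T^*F^*z^*-\theta\,z^*(Fe)\,x_r^*$, which by the same near-cancellation lies within $O(r+2k+\eta)$ of $(\theta-1)\big(T^*z^*-c\,x_r^*\big)$; bounding $\norm{T^*z^*-c\,x_r^*}$ by $1+\rho+O(\eta)$ off $V_1$ and by $\frac1{1-\eta}+O(r+2k)$ on $V_1$ (the latter from (V) and $z^*\in V_1$), one obtains $\norm{T-S}\le\frac{(\theta-1)(1+\rho)+O(r+2k+\eta)}{\theta}$; inserting $\theta$ and letting $r,\eta\to 0$, the right side tends to $\frac{4k}{1-\rho+2k}<2k\big(1+\frac2{1-\rho+2k}\big)$ — and to $2k$ when $Y\in\ACK$ — so $\norm{T-S}<\nu$ once $r,\eta$ are small enough.

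The one genuinely delicate step is this calibration of $\theta$: it must be large enough (of order $1+\frac{2k}{1-\rho}$) that the correction $\theta\,x_r^*(\cdot)\,Fe$ overpowers the $\rho$-deficiency of $F$ on $\G\setminus V_1$ — exactly where property (IV) and the dichotomy $V_1$ enter — yet close enough to $1$ that neither the renormalization $S_0\mapsto S_0/\theta$ nor the term $(\theta-1)T^*z^*$ spoils $\norm{T-S}<\nu$; reconciling these two demands is what produces the bound $2k\big(1+\frac2{1-\rho+2k}\big)$ and its collapse to $2k$ when $V_1=\G$.
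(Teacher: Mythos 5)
Your proof is essentially the paper's proof in a different parametrization: with $\theta=1/(1-\widetilde{\eps})$ your $S=S_0/\theta$ is literally the operator $S(x)=x_r^*(x)Fe+(1-\widetilde{\eps})(I_Y-F)Tx$ used there, your calibration of $\theta$ through the dichotomy $V_1$ versus $\G\setminus V_1$ reproduces the choice $\widetilde{\eps}=(3\eps'+r+2k)/(1-\rho+3\eps'+r+2k)$, and your limiting estimates stay strictly below the threshold $2k\bigl(1+\frac{2}{1-\rho+2k}\bigr)$ (resp.\ $2k$), so the argument closes. The one slip is the normalization ``after multiplying $x_r^*$ by a unimodular scalar we may take $x_r^*(u_r)=1$'': the Basic Lemma's inequality $\norm{T^*z^*-x_r^*}\le r+2k$ is tied to the unrotated $x_r^*$ and is destroyed by the rotation (already in the real case, if the scalar is $-1$ then $\norm{T^*z^*+x_r^*}\ge 2-(r+2k)$), yet you keep using that inequality afterwards. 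The fix is immediate and is what the paper does: do not rotate, and deduce norm attainment from $|x_1^*(S_0u_r)|=\theta\,|x_r^*(u_r)|=\theta$, which only needs $|x_r^*(u_r)|=1$.
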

\begin{proof}
First, consider the more involved case of  $Y \in \ACK_\rho$. Fix $r>0$ and $0<\eps'<2/3$.  Now, we
can apply Lemma~\ref{ACK-Lem} {with} $Y$,
$\Gamma$, $r$ and
$\eps > 0$. We produce a $w^*$-open set $U_r \subset Y^*$ with $U_r \cap \G \neq\emptyset$, and   points $x_r^*\in S_{X^*}$ and $u_r\in S_X$ with $|x_r^*(u_r)|=1$ such that \eqref{eq:newACKlemma} holds true.

Since $U_r \cap \G \neq\emptyset$, we can apply Definition \ref{def_ACK} to $U =  U_r  \cap \G$ and $\eps'$ and obtain a {non-empty} $V  \subset U$, $y_1^* \in V$, $e \in S_Y$, $F \in L(Y)$ and $V_1 \subset \G$  which satisfy properties (I) -- (VI). In particular, for every $z^* \in V \subset U_r \cap \G$ according to \eqref{eq:newACKlemma}  we have
 \begin{equation} \label{eqT^*v}
\norm{T^*z^* - x_r^*}\leq  r + 2k.
\end{equation}

Define now the linear operator $S\colon X\to Y$ by the formula
\begin{equation}\label{eq:operadorDef-new}
S(x)\coloneqq x_r^*(x)Fe+(1-\widetilde{\eps})(I_Y-F)Tx,
\end{equation}
where the value of $\widetilde{\eps} \in [\eps', 1)$ will be specified below in such a way that $\norm{S}\leq 1$. In order to do this, bearing in mind the fact that $\G$ is $1$-norming,  we can write
\[
\norm{S} = \norm{S^*}  = \sup\left\{\norm{S^*y^*}: y^* \in \Gamma\right\}.
\]
So our first goal is to estimate
\begin{equation} \label{widetilde{T}^*y^*}
\norm{S^*y^*} = \norm{y^*(Fe)x_r^* + (1-\widetilde{\eps}){T}^*(I_{Y^*} - F^*)(y^*)}
\end{equation}
from above for all $y^* \in \Gamma$. For $y^* \in V_1$, the sought estimate $\norm{S^*y^*} \le 1$ follows immediately from the definition of  $V_1$ (see property (IV)). So, it remains to consider the case  $y^* \in \G \setminus V_1$. Thanks to (V), for every $y^* \in \G$, there exists an element  $v^* = \sum_{k=1}^n \lambda_k v_k^*$ with
\begin{equation}\label{eq*F^*-v^*}
\norm{F^*y^* - v^*} < \eps'
\end{equation}
such that $\{v_k^*\}_{k=1}^n \subset  V$,  and $\sum_{k=1}^n |\lambda_k| \le 1$. According to \eqref{eqT^*v} we have  $\norm{T^*v_k^* - x_r^*}\leq r+ 2k$, consequently
\begin{align}  \label{eq*T^*v^*}
\nonumber \norm{v^*(e)x_r^*  - {T}^*v^*}  &\le   \sum_{k=1}^n  |\lambda_k| \norm{ v_k^*(e)x_r^*  - {T}^* v_k^*}  \\
                                          &\stackrel{\mathrm{(VI)}}\le \eps'+ \sum_{k=1}^n  |\lambda_k| \norm{x_r^*  - {T}^* v_k^*} \leq  \eps'+ r+ 2k.
\end{align}
Now, for every  $y^* \in \G \setminus V_1$
\begin{align*}
\norm{S^*y^*} & \le \widetilde{\eps}\, |y^*(Fe)| + (1-\widetilde{\eps}) \norm{y^*(Fe)x_r^* +{T}^*y^* - {T}^*F^*y^*} \\
& \stackrel{\mathrm{(IV)}}\le \widetilde{\eps} \rho +  (1-\widetilde{\eps}) \norm{{T}^*y^*} + (1-\widetilde{\eps}) \norm{(F^*y^*)(e)x_r^*  - {T}^*F^*y^*} \\
&  \stackrel{\eqref{eq*F^*-v^*}}\le  \widetilde{\eps}\rho +   (1-\widetilde{\eps})  + 2\eps' (1-\widetilde{\eps})+ (1-\widetilde{\eps}) \norm{v^*(e)x_r^*  - {T}^*v^*}  \\
&  \stackrel{\eqref{eq*T^*v^*}}\le  \widetilde{\eps}\rho + (1-\widetilde{\eps}) + 2\eps' (1-\widetilde{\eps})+ (1-\widetilde{\eps})(\eps' +  r + 2k)  \\
& \le  \widetilde{\eps}\rho + (1-\widetilde{\eps})(1 + 3\eps' + r+ 2k).
\end{align*}

This means, that if we choose $\widetilde{\eps} = (3 \eps' +r+ 2k)/(1 - \rho + 3\eps' + r+ 2k)$, then we have $\|S\|\leq 1$. In this case,
    \[
        1=|x_r^*(u_r)| \stackrel{\mathrm{(II)}}{=}|y_1^*(x_r^*(u_r)Fe)|\stackrel{\mathrm{(III)}}{=}|y_1^*(S(u_r))|\leq\norm{S(u_r)}\leq 1.
    \]
Therefore, $\norm{S} = 1$ and $S$ attains the norm at the point $u_0\coloneqq u_r \in S_X$ for which by \eqref{eq:newACKlemma} we already had that $\norm{u_0-x_0}\leq\frac{\eps}{k}$.

Now, let us estimate
\begin{align}\label{eq:T-T}
\nonumber \norm{S - T}  &= \norm{S^* - T^*} = \sup_{y^* \in \Gamma}\norm{S^*y^* - T^*y^*} \\
  &\le \sup_{y^* \in \Gamma}\norm{y^*(Fe)x_r^* - {T}^* F^*y^*} + 2 \widetilde{\eps}.
\end{align}

For every $y^* \in \G$ we can proceed the same way as before. Namely,
\begin{align*}
\norm{(F^*y^*)(e)x_r^* - {T}^* F^*y^*}  & \stackrel{\eqref{eq*F^*-v^*}}\le  2\eps' +  \norm{v^*(e)x_r^*  - {T}^*v^*}  \\
&  \stackrel{\eqref{eq*T^*v^*}}\le 3\eps'+  r + 2k.
\end{align*}
Combining this with the inequalities \eqref{eq:T-T} and the value of $\widetilde\eps$ we conclude that
\begin{equation} \label{eqqqqq}
        \norm{T-S}\leq  3\eps'+  r + 2k + 2\frac{3 \eps' +  r + 2k}{1 - \rho + 3\eps' +  r + 2k}.
\end{equation}

Since $r>0$ and  $0<\eps'<2/3$ are arbitrary, for suitable values we will have the desired estimate $ \norm{T-S} <   \nu$.

To finish the proof in the case of  $Y \in \ACK_\rho$ we observe that if $T$ belongs to a $\G$-flat ideal $\mathcal I$ then $S \in \mathcal I$.

\vspace{3mm}
Now the simpler case of  $Y \in \ACK$. In this case $\norm{S^*y^*} \le 1$ for all $y^* \in \G$ thanks to (IV)'. So, $\|S\|\leq 1$ for all values of $\widetilde{\eps} \in [\eps', 1)$  and we can simply take $\widetilde{\eps} = \eps'$. With such a choice of $\widetilde{\eps}$ the estimate \eqref{eqqqqq} changes to $  \norm{T-S}\leq  5\eps'+  r + 2k $, which again for small values of $r$ and $\eps'$ gives us $ \norm{T-S} <   \nu$ for the $\nu$ which corresponds to this case.
\end{proof}

\begin{proof}[Proof of Theorem \ref{theo:thetheoremainFAnewnew}]
First, select $\eps_0 \in (0, \eps)$ in such a way that the inequality $\norm{Tx_0} > 1-\eps_0$ is still valid. Now we  apply Lemma \ref{theo:thetheoremainFAnew} with $\eps_0$ instead of $\eps$ and substitute   $k = \sqrt{\eps_0/2}$. In the case of  $Y \in \ACK_\rho$ we take $\nu \in \Bigl(\sqrt{2\eps_0}\left( 1 + \frac{2}{1-\rho+\sqrt{2\eps_0}}\right), \sqrt{2\eps}\left( 1 + \frac{2}{1-\rho+\sqrt{2\eps}}\right)\Bigr)$, and  in the case of  $Y \in \ACK$  we take $\nu \in (\sqrt{2\eps_0},  \sqrt{2\eps})$.
\end{proof}

\begin{rem}
The statements of Lemma \ref{theo:thetheoremainFAnew} and Theorem \ref{theo:thetheoremainFAnewnew} remain correct if in the definition of $\ACK_\rho$ and $\ACK$ the property (IV) is substituted by the following weaker one, in which $V_1$ is larger than in the original definition:

Denote $V_1 = \{y^* \in \G :  |y^*(Fe)| + (1 - \eps')\norm{(I_{Y^*} - F^*)(y^*)} \le 1 \}$. Then  $|v^*(Fe)| \le  \rho$ for every $v^* \in \G \setminus V_1$.

Also, a look at the proof of Lemma \ref{ACK-Lem} shows that the condition of  $T$ being $\G$-flat can be weaken in the following way: for every $y \in B_Y$ and every  $\delta > 0$ if the $w^*$-slice $S(\G, x, \delta) \coloneqq  \{y^* \in \G: \re y^*(y) > 1 - \delta\}$ is not empty, then for every  $\eps > 0$ there exists  a non-empty relatively $w^*$-open subset $V \subset S(\G, x, \delta)$  such that $\diam (T^*(V)) < \eps$.

There are two reasons why we have selected the more restrictive variants. Firstly, with the restrictive definition of (IV) we are able to prove a nice stability result (Theorem \ref{theo:InjectiveTensorProduct} below), and secondly, all the examples with ``relaxed'' versions of (IV) and of  $\G$-flatness that we have in hand, satisfy the restrictive variant of (IV) and of  $\G$-flatness.
\end{rem}


\section{Banach spaces with $\ACK$ structure}

The aim of this section is presenting those \emph{natural} examples of Banach spaces having $\ACK$ structure as well as showing the stability of the $\ACK$ structure under some operations, such us $\ell_\infty$-sums or injective tensor products.


\medskip

First of all, let us introduce the first natural class of Banach spaces with $\ACK$ structure. As commented above, Definition~\ref{def_ACK}, comes from an analysis of the proofs in \cite{CasGuiKad}. We shall show next that, indeed, every uniform algebra $\algebra$ has simple $\ACK$ structure. The key tool is Lemma~\ref{Ur-alg}, that was proved in~\cite[Lemma 2.5 and Lemma 2.7]{CasGuiKad}, and is about the existence of peak functions  $f \in S_\algebra$  whose range is contained in the Stolz's region
   \[
      \st_\eps = \{z\in \mathbb{C}: |z| + (1-\eps)|1-z|\leq 1\}.
   \]

For a topological space $(T,\tau)$, we denote by $C_{b}(T)$ the space of bounded continuous functions $f\colon T \to \K$ equipped with the sup-norm.

\begin{defi} \label{algebroid subspace}
   Let $(T,\tau)$ be a topological space. A subalgebra $\algebra \subset C_{b}(T)$ is said to be an \emph{$\ACK$-subalgebra}, if for every {non-empty} open set $W \subset T$  and $0<\eps<1$, there exist $f\in\algebra$ and $t_0 \in W$ such that $f(t_0)=\norm{f}_{\infty}=1$, $|f(t)|<\eps$ for every $t\in T \setminus W$ and $f(T)\subset \st_\eps$.
\end{defi}

\begin{lem}\label{Ur-alg}
 Let $ \algebra\subset C(K)$ be a uniform algebra. Then there exists a topological space $\G_\algebra$ such that $\algebra$ is  isometric to an $\ACK$-subalgebra of $C_b(\G_\algebra)$. In the case of  $K$ being the space of multiplicative functionals on $\algebra$ the corresponding $\G_\algebra$ can be selected as a topological subspace of $K$.
\end{lem}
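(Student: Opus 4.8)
The plan is to represent $\algebra$ on its Choquet boundary. First I would reduce to the situation in which $K$ is the space $\mathcal{M}_\algebra$ of multiplicative functionals on $\algebra$: the Gelfand transform is an isometric algebra isomorphism of the uniform algebra $\algebra$ onto a uniform algebra $\widetilde\algebra\subset C(\mathcal{M}_\algebra)$ (it is multiplicative, injective, and $\norm{f}_{C(K)}=r(f)=\norm{\widehat f}_{C(\mathcal{M}_\algebra)}$ since a uniform algebra satisfies $\norm{f^n}=\norm{f}^n$), and $\widetilde\algebra$ separates the points of $\mathcal{M}_\algebra$ and contains the constants. So it suffices to prove the statement for $\widetilde\algebra$, and this is also where the last sentence of the lemma comes from. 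I would then put $\G_\algebra\coloneqq\operatorname{Ch}(\algebra)\subset\mathcal{M}_\algebra$, the Choquet boundary, i.e.\ the set of $t\in\mathcal{M}_\algebra$ for which the evaluation $\delta_t$ is an extreme point of $B_{\algebra^*}$ (equivalently, a strong boundary point), with the topology inherited from $\mathcal{M}_\algebra$.

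Next I would check that the restriction map $R\colon\algebra\to C_b(\G_\algebra)$, $Rf=f|_{\G_\algebra}$, is an isometric algebra embedding. It is plainly an algebra homomorphism with $\norm{Rf}_\infty\le\norm{f}_\infty$; the reverse inequality is the classical fact that for a uniform algebra the Choquet boundary is norming, i.e.\ $\norm{f}_\infty=\sup_{t\in\G_\algebra}|f(t)|$ for every $f\in\algebra$ (its closure being the Shilov boundary). Hence $R$ is an isometry onto its image and we may identify $\algebra$ with $R(\algebra)\subset C_b(\G_\algebra)$.

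It then remains to verify the defining property of an $\ACK$-subalgebra (Definition~\ref{algebroid subspace}). Fix a non-empty open set $W\subset\G_\algebra$ and $0<\eps<1$; write $W=W'\cap\G_\algebra$ with $W'$ open in $\mathcal{M}_\algebra$ and choose $t_0\in W$. Since $t_0$ is a point of the Choquet boundary, the peak-function construction of \cite[Lemma~2.5 and Lemma~2.7]{CasGuiKad} yields $f\in\algebra$ with $f(t_0)=\norm{f}_\infty=1$, with $|f(t)|<\eps$ for every $t\in\mathcal{M}_\algebra\setminus W'$, and with $f(\mathcal{M}_\algebra)\subset\st_\eps$. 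Restricting $f$ to $\G_\algebra$ concludes the argument: $t_0\in W$ and $f(t_0)=1$; $\norm{f|_{\G_\algebra}}_\infty=1$ by the norming property; $|f(t)|<\eps$ for every $t\in\G_\algebra\setminus W\subset\mathcal{M}_\algebra\setminus W'$; and $f(\G_\algebra)\subset f(\mathcal{M}_\algebra)\subset\st_\eps$.

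The only genuinely delicate step is the production of the Stolz-region-valued peak function: one starts from an (approximate) peak function at the strong boundary point $t_0$ and composes it with suitable conformal self-maps of the disc so as to bend its range into $\st_\eps$ while keeping it small off $W'$. This is exactly the content of the cited lemmas of \cite{CasGuiKad}, so in the present note the statement is essentially a repackaging; by contrast, the reduction to the maximal ideal space and the fact that the Choquet boundary is norming are standard facts from the theory of uniform algebras.
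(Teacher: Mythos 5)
Your proposal is correct and follows essentially the same route as the paper, which takes $\G_\algebra$ to be the set of points whose evaluation functionals are extreme points of $B_{\algebra^*}$ (the Choquet boundary, sitting inside the space of multiplicative functionals) and relies on exactly the peak-function lemmas of \cite{CasGuiKad} that you cite, together with the standard Gelfand-transform reduction and the fact that this set is norming. The only cosmetic difference is that those lemmas produce \emph{some} point $t_0$ in the given open set rather than a prescribed one, but Definition~\ref{algebroid subspace} only requires the existence of such a $t_0$, so your argument goes through unchanged.
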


We will use the following elementary property of $\st_\eps$.

\begin{lem} \label{stolzpowern}
   If $z$ belongs to the Stolz region $\st_\eps$, then $z^n \in\st_\varepsilon$.
\end{lem}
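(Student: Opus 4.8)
The plan is to prove the inclusion $z^n\in\st_\eps$ by a short, completely elementary estimate, using only two facts about a point $z\in\st_\eps$: that it lies in the closed unit disc, and that the defining inequality can be rearranged into $(1-\eps)|1-z|\le 1-|z|$. Concretely, first I would observe that for $z\in\st_\eps$ one has $|z|\le |z|+(1-\eps)|1-z|\le 1$, and, moving $|z|$ to the right-hand side of the defining inequality, $(1-\eps)|1-z|\le 1-|z|$. This is the only information about $z$ that the argument will use.

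Next I would exploit the factorization $1-z^n=(1-z)\sum_{j=0}^{n-1}z^j$. The triangle inequality gives $|1-z^n|\le |1-z|\sum_{j=0}^{n-1}|z|^j$. Multiplying the inequality $(1-\eps)|1-z|\le 1-|z|$ by the nonnegative number $\sum_{j=0}^{n-1}|z|^j$ and using the geometric-series identity $(1-|z|)\sum_{j=0}^{n-1}|z|^j=1-|z|^n$, one obtains
\[
(1-\eps)|1-z^n|\ \le\ (1-\eps)|1-z|\sum_{j=0}^{n-1}|z|^j\ \le\ (1-|z|)\sum_{j=0}^{n-1}|z|^j\ =\ 1-|z|^n\ =\ 1-|z^n|.
\]
Adding $|z^n|$ to both ends yields $|z^n|+(1-\eps)|1-z^n|\le 1$, that is, $z^n\in\st_\eps$, which is exactly the claim.

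I do not expect a real obstacle here; the only thing one has to spot is the factorization together with the telescoping identity $(1-t)(1+t+\dots+t^{n-1})=1-t^n$, and then to double-check the degenerate case $|z|=1$ — but in $\st_\eps$ this forces $z=1$, and $1^n=1\in\st_\eps$, so there is nothing to worry about. For completeness I would remark that there is also a softer, more conceptual route: the function $z\mapsto|z^n|+(1-\eps)|1-z^n|$ is subharmonic, being of the form $|g|+(1-\eps)|h|$ with $g(z)=z^n$ and $h(z)=1-z^n$ holomorphic, so by the maximum principle its supremum over the compact convex set $\st_\eps$ is attained on $\partial\st_\eps$; however this still leaves a boundary computation to carry out, so the elementary estimate above is the one I would actually write down.
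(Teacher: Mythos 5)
Your argument is correct and is essentially the paper's own proof: both use the factorization $1-z^n=(1-z)(1+z+\dots+z^{n-1})$, the rearranged defining inequality $(1-\eps)|1-z|\le 1-|z|$, and the telescoping identity $(1-|z|)(1+|z|+\dots+|z|^{n-1})=1-|z|^n$. No gaps; the subharmonicity remark is unnecessary but harmless.
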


\begin{proof}
   For every $z\in\st_\eps$ it holds
   \begin{align*}
      |z^n| + (1-\varepsilon)|1 - z^n| &= |z^n| + (1-\varepsilon)|1 - z||1 + z + \ldots + z^{n-1}|\\
                                       &\leq |z|^n + (1 - |z|)|1 + z + \ldots + z^{n-1}|\\
									   &\leq |z|^n + (1 - |z|)(1 + |z| + \ldots + |z|^{n-1}) \\
									   &= |z|^n + (1 - |z|^n) = 1,
   \end{align*}
   which finishes the proof.
\end{proof}

The following simple lemma gives an essential property that turns uniform algebras into Banach spaces with simple $\ACK$ structure.

\begin{lem}\label{lem-e-f}
   Let $\algebra\subset C_{b}(\G_\algebra)$ be an $\ACK$-subalgebra. Then, for every non-empty open set $W\subset \G_\algebra$ and $0<\eps<1$, there exist a non-emtpy subset $W_0 \subset W$, functions $f$, $e\in\algebra$, and $t_0 \in W_0$ such that $f(t_0)=\norm{f} = 1$, $e(t_0)=\norm{e} = 1$, $|f(t)|< \eps$ for every $t\in \G_\algebra \setminus W_0$, $|1-e(t)|<\eps$ for every $t\in W_0$ and  $f(\G_\algebra)\subset \st_\eps$.
\end{lem}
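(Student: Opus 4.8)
The plan is to manufacture both $f$ and $e$ out of a single function supplied by the definition of an $\ACK$-subalgebra, exploiting the geometry of the Stolz region together with Lemma~\ref{stolzpowern}. First I would apply Definition~\ref{algebroid subspace} to the open set $W$ and the number $\eps$ to get $g\in\algebra$ and $t_0\in W$ with $g(t_0)=\norm{g}_\infty=1$, $|g(t)|<\eps$ for every $t\in\G_\algebra\setminus W$, and $g(\G_\algebra)\subset\st_\eps$. All the subsequent work is to carve out $W_0$ from $W$ and to produce $e$ and a power of $g$ playing the role of $f$.

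Next I would fix $\eta$ with $0<\eta<\eps(1-\eps)$ and set $W_0\coloneqq\{t\in\G_\algebra:|g(t)|>1-\eta\}$. This set is open, it contains $t_0$ (since $|g(t_0)|=1>1-\eta$), and it satisfies $W_0\subset W$: on $\G_\algebra\setminus W$ one has $|g|<\eps<1-\eta$, using $\eps(1-\eps)<1-\eps$. Now take $e\coloneqq g$. The key observation is that inside $\st_\eps$ large modulus forces proximity to $1$: for $t\in W_0$, from $|g(t)|+(1-\eps)|1-g(t)|\le 1$ we obtain $|1-g(t)|\le\frac{1-|g(t)|}{1-\eps}<\frac{\eta}{1-\eps}<\eps$, which is exactly the required property $|1-e(t)|<\eps$ on $W_0$; and $e(t_0)=\norm{e}_\infty=1$ is inherited from $g$.

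Finally I would set $f\coloneqq g^n$ with $n$ large enough that $(1-\eta)^n<\eps$. By Lemma~\ref{stolzpowern}, $g(\G_\algebra)\subset\st_\eps$ yields $f(\G_\algebra)\subset\st_\eps$; moreover $f(t_0)=g(t_0)^n=1$ and $\norm{f}_\infty\le\norm{g}_\infty^n=1$, so $f(t_0)=\norm{f}_\infty=1$; and for $t\in\G_\algebra\setminus W_0$ we have $|g(t)|\le 1-\eta$, whence $|f(t)|=|g(t)|^n\le(1-\eta)^n<\eps$. Collecting these facts gives all the conclusions of the lemma.

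The only genuinely non-routine ingredient is the remark that within $\st_\eps$ the condition ``$|z|$ close to $1$'' already forces ``$|1-z|$ small'', which is what permits $e=g$ to serve as the second function; everything else is bookkeeping about the constants. I do not expect a serious obstacle — the main care is to make the choices in the correct order (the radius $\eps$ first, then the threshold $\eta<\eps(1-\eps)$, then the power $n$) so that the three requirements $W_0\subset W$, $|1-e|<\eps$ on $W_0$, and $|f|<\eps$ off $W_0$ all hold simultaneously.
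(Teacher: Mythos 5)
Your proposal is correct and follows essentially the same route as the paper: apply Definition~\ref{algebroid subspace} once to $W$ and $\eps$, keep the resulting function as $e$, carve out $W_0$ as the region where $e$ is close to $1$ (you phrase it via $|g|>1-\eta$, the paper via $|1-e|<\eps$, but the Stolz-region inequality makes these two descriptions interchangeable), and take $f$ to be a sufficiently high power of $e$ using Lemma~\ref{stolzpowern}. No gaps.
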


\begin{proof}
   By using Definition \ref{algebroid subspace} for the open set $W \subset \G_\algebra$ and $\eps$, we get a function $e\in\algebra$ and $t_0\in W$ such that $e(t_0)=\norm{e}=1$, $|e(t)|<\eps$ for every $t\in \G_\algebra \setminus W$ and $e(\G_\algebra)\subset \st_\eps$. Let $W_0\coloneqq \{t\in W: |1-e(t)| < \eps\}$. Define the function $f_n\colon \G_\algebra\to\K$ by $f_n(t)\coloneqq (e(t))^n$ whose range, by Lemma \ref{stolzpowern}, is contained in $\st_\eps$. From the very definition of $W_0$ and the fact that $e(\G_\algebra)\subset\st_\eps$, we deduce that $|e(t)|\leq 1-\eps(1-\eps)<1$ for every $t\in\G_\algebra\setminus W_0$. Thus, taking a suitable $n_0\in\mathbb{N}$, we can assume that $|f_{n_0}(t)|=|e(t)|^{n_0}<\eps$ on $\G_\algebra\setminus W_0$. Therefore, $f\coloneqq f_{n_0}\in\algebra$ gives the conclusions of the lemma.
\end{proof}

\begin{theo}\label{theo:unalg->ACK}
   Let $\algebra \subset C_{b}(\G_\algebra)$ be an $\ACK$-subalgebra, and let $X$ be a subspace $\algebra\subset X\subset  C_{b}(\G_\algebra)$ that has the following property: $fx \in X$ for every $x \in X$ and $f \in\algebra$. Then $X \in \ACK$ with the corresponding $1$-norming subset of $B_{X^*}$ being $\G=\{\delta_t: t\in \G_\algebra\}$.
\end{theo}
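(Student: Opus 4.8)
The plan is to verify directly the six conditions of Definition~\ref{def_ACK} in its ``simple'' form (with (IV)$'$ in place of (IV)), taking as the operators $F$ the multiplication operators $x \mapsto fx$ by the functions $f \in \algebra$ supplied by Lemma~\ref{lem-e-f}.

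First I would record the soft part. Since $X \subset C_b(\G_\algebra)$ carries the sup norm, every point evaluation $\delta_t$, $t \in \G_\algebra$, has norm $\le 1$, so $\G = \{\delta_t : t \in \G_\algebra\} \subset B_{X^*}$, and $\G$ is $1$-norming because $\norm{x}_\infty = \sup_t |x(t)| = \sup_t |\delta_t(x)|$. The evaluation map $\iota\colon \G_\algebra \to (\G, w^*)$, $\iota(t) = \delta_t$, is continuous (each $x \in X$ is continuous on $\G_\algebra$) and onto $\G$, so for any non-empty relatively $w^*$-open $U \subset \G$ the set $W := \iota^{-1}(U)$ is a non-empty open subset of $\G_\algebra$. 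We may assume $0 < \eps < 1$, since for larger $\eps$ one runs the argument below with $\eps$ replaced by $1/2$, which only strengthens (IV)$'$, (V) and (VI).

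Given $\eps \in (0,1)$ and $U$, apply Lemma~\ref{lem-e-f} to $W$ and $\eps$ to obtain a non-empty $W_0 \subset W$, functions $f, e \in \algebra$ and $t_0 \in W_0$ with $f(t_0) = \norm{f}_\infty = 1$, $e(t_0) = \norm{e}_\infty = 1$, $|f(t)| < \eps$ on $\G_\algebra \setminus W_0$, $|1 - e(t)| < \eps$ on $W_0$, and $f(\G_\algebra) \subset \st_\eps$. Then take $F \in L(X)$ to be $Fx := fx$ (well-defined by the module hypothesis on $X$, with $\norm{F} \le \norm{f}_\infty = 1$), the same $e$ (so $e \in S_X$), the functional $x_1^* := \delta_{t_0}$, and $V := \iota(W_0) = \{\delta_t : t \in W_0\}$, which is a non-empty subset of $U$ containing $x_1^*$. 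The computation that drives everything is the identity $F^*\delta_t = f(t)\,\delta_t$ in $X^*$, valid for every $t \in \G_\algebra$.

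With this in hand the six conditions are immediate: (I) $1 = (Fe)(t_0) \le \norm{Fe} \le \norm{F}\,\norm{e} \le 1$, so $\norm{Fe} = \norm{F} = 1$; (II) $x_1^*(Fe) = (Fe)(t_0) = f(t_0)e(t_0) = 1$; (III) $F^*\delta_{t_0} = f(t_0)\delta_{t_0} = \delta_{t_0}$; (IV)$'$ for $x^* = \delta_t$ the left-hand side equals $\norm{\delta_t}\bigl(|f(t)| + (1-\eps)|1-f(t)|\bigr) \le \norm{\delta_t} \le 1$ because $f(t) \in \st_\eps$; (V) for $t \in W_0$ one has $F^*\delta_t = f(t)\delta_t \in \aconv\{0,V\}$ since $|f(t)| \le 1$, while for $t \notin W_0$ one has $\norm{F^*\delta_t} \le |f(t)| < \eps$ and $0 \in \aconv\{0,V\}$, so $\dist(F^*x^*, \aconv\{0,V\}) < \eps$ in either case; (VI) for $v^* = \delta_t \in V$ (so $t \in W_0$), $|v^*(e) - 1| = |e(t) - 1| < \eps$. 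I do not anticipate a genuine obstacle here: the only points needing a little care are the identity $F^*\delta_t = f(t)\delta_t$, which is precisely what keeps $\G$ in the right position under $F^*$, and the observation that the Stolz-region inclusion $f(\G_\algebra) \subset \st_\eps$ from Lemma~\ref{lem-e-f} is exactly tailored to condition (IV)$'$; the hypothesis that $X$ is a module over $\algebra$ is used only to guarantee $F \in L(X)$.
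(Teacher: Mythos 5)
Your proposal is correct and follows essentially the same route as the paper: the same choices $V=\{\delta_t: t\in W_0\}$, $x_1^*=\delta_{t_0}$, $e=e_\algebra$, $Fx=fx$ coming from Lemma~\ref{lem-e-f}, the same key identity $F^*\delta_t = f(t)\delta_t$, and the same case analysis for (IV)$'$ (via the Stolz region) and (V). The only additions are harmless expository details (the continuity of the evaluation map and the reduction to $\eps<1$) that the paper leaves implicit.
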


\begin{proof}
  Fix $\eps>0$ and a non-emtpy relatively $w^*$-open subset $U = \{\delta_t: t \in W \subset \G_\algebra\}\subset\G$. Observe that $W \subset \G_\algebra$ is open. Now, by applying  Lemma~\ref{lem-e-f} to $W$ with $\eps$ we obtain the corresponding $W_0\subset \G_\algebra$, $t_0\in W_0$, $f$, $e_\algebra\in\algebra$. Let us define  $V\subset U$, $x_1^* \in V$, $e \in S_{X}$ and $F \in L(X)$ as follows:
\[
V\coloneqq \{\delta_t: t\in W_0\},\quad
x_1^*\coloneqq {\delta_{t_0}},\quad
e\coloneqq e_\algebra,\quad
Fx\coloneqq f x,\,\text{for}\,x\in X.
\]
Then, $F^*x^* = f(t) x^*$  for every $x^* = \delta_t \in \G$. We shall show that properties (I) -- (VI) are satisfied. First, $\norm{F} \leq1$ and $\norm{Fe} = e(t_0) f(t_0) = 1$, which proves (I). Property (II) is straightforward from $x_1^*(Fe) = x^*_1(f e) = e(t_0) f(t_0) = 1$. From $(F^*x_1^*)(x) = x(t_0) f(t_0) = x(t_0) = x^*_1({x})$ we deduce that $F^*x_1^* = x_1^*$, which is (III). To show (IV)', take $x^*=\delta_t\in\G$ and estimate
\begin{align*}
\norm{F^*{x^*}} &+ (1 - \eps)\norm{(I_{{X^*}} - F^*)({x^*})} \\
              &\leq |f(t)|+(1-\eps)|1-f(t)|\le 1.
\end{align*}
Let us show now (V). Take $x^* = \delta_t \in \G$. In case $t$ belongs to $\G_\algebra\setminus W_0$, then $\norm{F^*x^*} = |f(t)| <\eps$. Otherwise, $t\in W_0$ (that is, $x^*\in V$), using that $F^*x^* =  f(t) x^*$ and that $f\in S_X$, we deduce that  $f(t) x^* \in \aconv\{0, V\}$. Hence, in both cases
\[
\dist(F^*x^*, \aconv\{0, V\}) < \eps.
\]
Finally, for every $v^* \in V$ we have that $v^*(e)=e(t)$ for some $t\in W_0$. So,
\[
|v^*(e) - 1| = |e(t)-1|\le  \eps,
\]
which shows (VI) and finishes the proof.
\end{proof}

From Lemma \ref{Ur-alg} and Theorem \ref{theo:unalg->ACK} taking $X = \algebra$ we obtain the promised example.
\begin{cor}\label{cor:unalg->ACK}
Every uniform algebra $\algebra$ has simple $\ACK$ structure.
\end{cor}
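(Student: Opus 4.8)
The plan is to derive the corollary directly from Lemma~\ref{Ur-alg} and Theorem~\ref{theo:unalg->ACK}, the only genuine point to check being that the hypotheses of the latter are met when one takes the ambient space $X$ to be the algebra itself.

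First I would invoke Lemma~\ref{Ur-alg}: starting from a uniform algebra $\algebra \subset C(K)$, it furnishes a topological space $\G_\algebra$ together with a linear isometry identifying $\algebra$ with an $\ACK$-subalgebra $\widetilde{\algebra} \subset C_b(\G_\algebra)$. Since having (simple) $\ACK$ structure is an isometric invariant — the witnessing data in Definition~\ref{def_ACK}, namely the $1$-norming set $\G \subset B_{X^*}$, the vectors $x_1^*$, $e$ and the operator $F \in L(X)$, all transport along a surjective linear isometry, and properties (I)--(VI), (IV)' are purely metric — it suffices to show that $\widetilde{\algebra}$ has simple $\ACK$ structure.

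Next I would apply Theorem~\ref{theo:unalg->ACK} with $X = \widetilde{\algebra}$, i.e. with $\algebra = X = \widetilde{\algebra} \subset C_b(\G_\algebra)$. The auxiliary hypothesis on $X$ in that theorem — that $fx \in X$ whenever $x \in X$ and $f \in \algebra$ — is automatic here, because $X = \widetilde{\algebra}$ is a subalgebra of $C_b(\G_\algebra)$ and hence closed under multiplication. The theorem then gives $\widetilde{\algebra} \in \ACK$, with $1$-norming set $\G = \{\delta_t : t \in \G_\algebra\}$ and with the multiplication operator $F \in L(\widetilde{\algebra})$ produced in its proof; since that proof verifies (IV)' (so $V_1 = \G$), this is precisely simple $\ACK$ structure. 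Pulling back along the isometry from Lemma~\ref{Ur-alg} yields $\algebra \in \ACK$, as claimed.

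I expect no real obstacle at the level of this corollary: all the substantive work has been front-loaded, into Lemma~\ref{Ur-alg} (where the peak-function construction with range inside the Stolz region $\st_\eps$, imported from \cite{CasGuiKad}, does the heavy lifting) and into Theorem~\ref{theo:unalg->ACK} (where Lemma~\ref{lem-e-f} and Lemma~\ref{stolzpowern} are used to check (I)--(VI) and (IV)'). The only item deserving an explicit sentence is the remark, used implicitly above, that the $\ACK$ property is preserved under isometric isomorphisms; granting that, the proof is a two-line composition of the two cited results specialized to $X = \algebra$.
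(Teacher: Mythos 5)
Your argument is exactly the paper's proof: apply Lemma~\ref{Ur-alg} and then Theorem~\ref{theo:unalg->ACK} with $X=\algebra$, the multiplicative-stability hypothesis being automatic since $\algebra$ is a subalgebra. Your explicit remark that simple $\ACK$ structure transfers along surjective linear isometries is a correct (and implicitly assumed) detail that the paper leaves unstated, so nothing is missing.
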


Theorem \ref{theo:unalg->ACK} gives more examples of spaces with simple $\ACK$ structure. For instance, let $\T$ be the unit disk in $\C$, $ A(\T) \subset C(\T)$ be the disc-algebra, i.e., $ A(\T) $ is the closure in   $C(\T)$ of the set $\{\sum_{k=0}^m a_k z^k : a_k \in \C, m \in \N\}$ of all polynomials.  For a given $n \in \N$ denote $A_n(\T)$ the closure in   $C(\T)$ of the set $\{\sum_{k = -n}^m a_k z^k : a_k \in \C, m \in \N\}$. Then $ A(\T) $ and ${X} = A_n(\T)$ satisfy all the conditions of  Theorem \ref{theo:unalg->ACK}, so $A_n(\T) \in \ACK$, but $A_n(\T)$ is not an algebra.
Another example: let  $c_0 \subset {X} \subset \ell_\infty$. Then ${X} \in \ACK$.

The first example is of illustrative character, because the space $A_n(\T)$ is isometric to the algebra $A(\T)$. In contrast, the second example gives a big variety of mutually non-isomorphic spaces with $\ACK$ structure. Observe that the simple $\ACK$ structure of those {$X$ such} that   $c_0 \subset {X} \subset \ell_\infty$ can be also deduced from Theorem \ref{theo:beta->ACK} below.

\begin{rem}
In general, it is not clear whether for a given  $T \in \Fl(X,Y)$ the formula \eqref{eq:operadorDef-new} gives a $\G$-flat operator  $S$. But, under the conditions of Theorem \ref{theo:unalg->ACK},
we have an additional property {$F^*x^* = f(t) x^*$}. Combining this property with  (iv) of Theorem \ref{theo:op-fragm-comp}, we get $S \in \Fl(X,Y)$. In particular, in the case of uniform algebras the Bishop--Phelps--Bollob\'as type approximation of $\G$-flat operators can be made by operators that are $\G$-flat as well.
\end{rem}

Now we show that Banach spaces with Lindenstrauss' property  $\beta$ (see for instance~\cite{Lind1963}) have $\ACK$ structure.

\begin{defi} \label{def:beta} A Banach space $X$ is said to have the property $\beta$ if there exist two sets $\{x_\alpha: \alpha\in \Lambda\}\subset S_X$, $\{x^*_\alpha: \alpha\in  \Lambda\}\subset S_{X^*}$ and $\rho\in[0,1)$ such that the following conditions hold:
\begin{enumerate}
\item $x^*_\alpha(x_\alpha) = 1$;
\item $|x^*_\alpha(x_\gamma)| \leq \rho < 1$ if $\alpha \neq \gamma$; and
\item $\norm{x} = \sup\{|x_\alpha^*(x)|: \alpha \in \Lambda\}$, for all $x\in X$.
\end{enumerate}
\end{defi}

\begin{theo}\label{theo:beta->ACK}
Let $X$ have the property $\beta$. Then $X \in \ACK_\rho$ with the same value of $\rho$ as in Definition \ref{def:beta} and with $\Gamma=\{x_\alpha^*: \alpha \in \Lambda\}$ from that definition. Moreover, if $X$ has property $\beta$ with $\rho = 0$, then $X \in \ACK$.
\end{theo}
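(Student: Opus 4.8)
The plan is to verify the six conditions of Definition~\ref{def_ACK} directly, using the rank-one norm-one projections supplied by property $\beta$; this turns out to be almost mechanical. Take $\G = \{x_\alpha^*: \alpha \in \Lambda\}$: it lies in $S_{X^*} \subset B_{X^*}$ and is $1$-norming by condition~(iii) of Definition~\ref{def:beta}, so it is a legitimate candidate for the norming set in the definition of $\ACK_\rho$. Fix $\eps>0$ and a non-empty relatively $w^*$-open set $U \subset \G$; since $U \neq \emptyset$ we may pick $\alpha \in \Lambda$ with $x_\alpha^* \in U$ (the openness of $U$ is not actually needed). I would then take
\[
V := \{x_\alpha^*\}, \qquad x_1^* := x_\alpha^*, \qquad e := x_\alpha, \qquad Fx := x_\alpha^*(x)\,x_\alpha \quad (x \in X),
\]
so that $F \in L(X)$ is the norm-one projection onto $\operatorname{span}\{x_\alpha\}$ along $\ker x_\alpha^*$, and I would record the identity $F^*x^* = x^*(x_\alpha)\,x_\alpha^*$ valid for all $x^* \in X^*$; in particular $F^*x_\gamma^* = x_\gamma^*(x_\alpha)\,x_\alpha^*$ for every $\gamma \in \Lambda$.

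With these choices, (I), (II), (III) are immediate from $x_\alpha^*(x_\alpha)=1$ and $\norm{x_\alpha} = \norm{x_\alpha^*} = 1$, since $Fe = x_\alpha$, $\norm{Fe} = \norm{F} = 1$, $x_1^*(Fe)=1$ and $F^*x_1^* = x_\alpha^*(x_\alpha)\,x_\alpha^* = x_1^*$. For (V): for every $x^*\in\G$ the vector $F^*x^* = x^*(x_\alpha)\,x_\alpha^*$ is a scalar multiple of $x_\alpha^*$ of modulus $|x^*(x_\alpha)| \le 1$, hence $F^*x^* \in \aconv\{0,x_\alpha^*\} = \aconv\{0,V\}$ and the distance in (V) is in fact $0$. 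Property (VI) is trivial because the only element of $V$ is $x_\alpha^*$ and $x_\alpha^*(e) = x_\alpha^*(x_\alpha) = 1$.

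The only condition requiring an argument is (IV). Since $F^*x_\alpha^* = x_\alpha^*$ and $(I_{X^*}-F^*)x_\alpha^* = 0$, one has $\norm{F^*x_\alpha^*} + (1-\eps)\norm{(I_{X^*}-F^*)x_\alpha^*} = 1$, so $x_\alpha^* \in V_1$; consequently $\G\setminus V_1$ is contained in $\{x_\gamma^*: \gamma \neq \alpha\}$, and for each such $x_\gamma^*$ we get $|x_\gamma^*(Fe)| = |x_\gamma^*(x_\alpha)| \le \rho$ by condition~(ii) of Definition~\ref{def:beta}. This is exactly (IV), giving $X \in \ACK_\rho$ with the stated value of $\rho$. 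When $\rho = 0$ I would instead check the stronger (IV)': for $x^* = x_\alpha^*$ the left-hand side equals $1$, and for $x^* = x_\gamma^*$ with $\gamma \neq \alpha$ we have $x_\gamma^*(x_\alpha) = 0$, so $F^*x_\gamma^* = 0$ and the left-hand side equals $(1-\eps)\norm{x_\gamma^*} = 1-\eps \le 1$; hence $X \in \ACK$. I do not expect a serious obstacle here: the only things to keep straight are that the estimate in (IV) needs to be verified only off $V_1$, and that the absolute convex hull must be handled as $\{\lambda x_\alpha^*: |\lambda|\le 1\}$ in the complex case — there is no real difficulty because the projection $F$ ``peaks'' exactly at the single functional $x_\alpha^*$ and satisfies $\norm{F^*x_\gamma^*} \le \rho$ for all other $\gamma$.
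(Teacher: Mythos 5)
Your proposal is correct and follows essentially the same route as the paper: the same choices $V=\{x_\alpha^*\}$, $x_1^*=x_\alpha^*$, $e=x_\alpha$, $F=x_\alpha^*(\cdot)\,x_\alpha$, the same verification that $x_\alpha^*\in V_1$ so that (IV) reduces to $|x_\gamma^*(x_\alpha)|\le\rho$ for $\gamma\neq\alpha$, and the same check of (IV)' when $\rho=0$. No gaps.
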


\begin{proof}
Since $X$ has property $\beta$, the set $\G=\{x_\alpha^*: \alpha \in \Lambda\}$ is a $1$-norming subset of $B_{X^*}$. Observe that property $\beta$ implies that $(\G,w^*)$ is a discrete topological space. Fix $\eps>0$ and a non-empty relatively $w^*$-open subset $U\subset\G$. Take $x_{\alpha_0}^*\in U$. Let us define the corresponding $V$, $x_1^* \in V$, $e \in S_{X}$, and $F \in L(X)$ as follows:
\[
V\coloneqq \{x^*_{\alpha_0}\}\subset U,\quad
x_1^*\coloneqq x_{\alpha_0}^*,\quad
e\coloneqq x_{\alpha_0},\quad
F(x)\coloneqq x_{\alpha_0}^*(x) x_{\alpha_0}.
\]
It is clear that $F^*x^* = x^*(x_{\alpha_0}) x_{\alpha_0}^*$ for every $x^*\in X^*$. We shall show that properties (I) -- (VI) of Definition~\ref{def_ACK} hold true. Properties (I) -- (III) are routine. To show (IV) observe first that
\begin{align*}
 \norm{F^*x^*_{\alpha_0}} + (1 - \eps)\norm{(I_{X^*} - F^*)(x^*_{\alpha_0})} = \norm{x^*_{\alpha_0}(x_{\alpha_0})x^*_{\alpha_0}}  = 1,
\end{align*}
that is, $x^*_{\alpha_0} \in V_1$. Consequently, whenever $v^* =  x^*_{\alpha} \in \G \setminus V_1$, then $\alpha \neq \alpha_0$ and thus $|v^*(Fe)| = |x_\alpha^*(x_{\alpha_0})| \le  \rho$.

In case that $\rho = 0$, we have that $F^*x^*_{\alpha} = 0$ for every  $\alpha \neq \alpha_0$, so
\begin{align*}
 \norm{F^*x^*_{\alpha}} + (1 - \eps)\norm{(I_{X^*} - F^*)x^*_{\alpha}} = (1 - \eps) \norm{x^*_{\alpha}}  < 1,
\end{align*}
i.e., $V_1 = \G$.

Property (V) is a consequence of the fact that $F^*x^* \in \aconv\{0, V\}$ for every $x^* =  x^*_{\alpha} \in \G$, because $F^*x^*= x^*_{\alpha}(x_{\alpha_0})x^*_{\alpha_0}$. Finally, property (VI) and in turn our conclusions are consequence of the fact that the unique $v^* \in V$ is $v^* = x^*_{\alpha_0}$, so $|v^*(e) - 1| = 0 \le  \eps$.
\end{proof}

\begin{cor}[{\cite[Theorem 2.2]{aco-aro-gar-mae}}] \label{coroll:beta->ACK}
   Let $Y$ have property $\beta$. Then, for every Banach space $X$, the pair $(X, Y)$ has the Bishop--Phelps--Bollob\'{a}s property for operators.
\end{cor}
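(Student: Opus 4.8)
The plan is to derive the corollary directly from Theorems~\ref{theo:beta->ACK} and~\ref{theo:thetheoremainFAnewnew}, with essentially no work beyond unwinding the quantifiers. First, since $Y$ has property $\beta$ with constant $\rho$, Theorem~\ref{theo:beta->ACK} gives $Y\in\ACK_\rho$ with associated $1$-norming set $\G=\{x_\alpha^*:\alpha\in\Lambda\}$. As already observed in the proof of that theorem, $(\G,w^*)$ is discrete: for each $\alpha$ the $w^*$-open set $\{y^*\in Y^*:|y^*(x_\alpha)-1|<1-\rho\}$ contains $x_\alpha^*$ and misses every $x_\gamma^*$ with $\gamma\neq\alpha$, because $|x_\gamma^*(x_\alpha)-1|\ge 1-|x_\gamma^*(x_\alpha)|\ge 1-\rho$. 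By Example~\ref{example-Discrete-are-G-Flat}, discreteness of $(\G,w^*)$ forces every $T\in L(X,Y)$ to be $\G$-flat, i.e.\ $\Fl(X,Y)=L(X,Y)$. Hence Theorem~\ref{theo:thetheoremainFAnewnew} applies to an arbitrary $T\in S_{L(X,Y)}$; this is exactly the point where we obtain the BPB property for \emph{all} operators rather than only for Asplund operators.

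Next I would carry out the elementary passage to the modulus $\delta(\eps)$ demanded by the definition of the BPB property. Fix $\eps>0$ and consider
\[
g(t)\coloneqq\sqrt{2t}\left(1+\frac{2}{1-\rho+\sqrt{2t}}\right),
\]
which is continuous and strictly increasing on $(0,1/2]$ and satisfies $g(0^+)=0$; here it is essential that $\rho<1$, which holds by Definition~\ref{def:beta}. Choose $\delta(\eps)\in(0,1/2]$ with $g(\delta(\eps))<\eps$ (note $\delta(\eps)$ depends only on $\eps$ and $\rho$, not on $X$). Now let $T\in S_{L(X,Y)}$ and $x_0\in S_X$ satisfy $\norm{Tx_0}>1-\delta(\eps)$. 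Applying Theorem~\ref{theo:thetheoremainFAnewnew} with $\delta(\eps)$ in the role of its parameter $\eps$, we obtain $u_0\in S_X$ and $S\in S_{L(X,Y)}$ with $\norm{Su_0}=1$ and $\max\{\norm{x_0-u_0},\norm{T-S}\}<g(\delta(\eps))<\eps$, which is precisely what the BPB property for operators requires.

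I do not expect a genuine obstacle, since all the substance is already contained in Theorems~\ref{theo:beta->ACK} and~\ref{theo:thetheoremainFAnewnew}. The only two points worth double-checking are that discreteness of $(\G,w^*)$ really does upgrade ``$\G$-flat'' to ``arbitrary operator'' (so that the conclusion is the BPB property for operators and not merely for Asplund operators), and that $g(t)\to 0$ as $t\to 0^+$, which would fail if $\rho$ were allowed to equal $1$ — this is the single place where the quantitative strength of property $\beta$, as opposed to a weaker norming condition, is used.
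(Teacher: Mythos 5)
Your proposal is correct and follows exactly the paper's route: Theorem \ref{theo:beta->ACK} gives $Y\in\ACK_\rho$ with $(\G,w^*)$ discrete, Example \ref{example-Discrete-are-G-Flat} then makes every $T\in L(X,Y)$ $\G$-flat, and Theorem \ref{theo:thetheoremainFAnewnew} finishes the argument. The only difference is that you spell out the discreteness of $(\G,w^*)$ and the choice of the modulus $\delta(\eps)$ via $g(t)\to 0$ (using $\rho<1$), details the paper leaves implicit.
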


\begin{proof}
In the proof of Theorem \ref{theo:beta->ACK}, $(\G, w^*)$ is a discrete topological space. Therefore every operator $T \in L(X,Y)$ is $\G$-flat (Example~\ref{example-Discrete-are-G-Flat} after Definition \ref{def_gam-flat-op}). Now the application of  Theorem \ref{theo:thetheoremainFAnewnew} completes the proof.
\end{proof}


Now we show the stability of the $\ACK$ structure with respect to the operations of $\ell_\infty$-sum and injective tensor product of two spaces (Theorem~\ref{theo:Y_1Y_2} and Theorem~\ref{theo:InjectiveTensorProduct})

\begin{theo}\label{theo:Y_1Y_2}
Let $X$, $Y$ be Banach spaces having $\ACK$ structure with parameters $\rho_X$ and $\rho_Y$ respectively. Then $Z\coloneqq X\bigoplus_\infty Y \in \ACK_{\rho}$ with  $\rho= \max\{\rho_X, \rho_Y\}$. Moreover,  $Z \in \ACK$ whenever $X$, $Y \in \ACK$.
\end{theo}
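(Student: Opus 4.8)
The plan is to combine the $\ACK_\rho$ data for $X$ and the $\ACK_\rho$ data for $Y$ "coordinatewise" to produce the required data for $Z = X\bigoplus_\infty Y$. Recall that $Z^* = X^*\bigoplus_1 Y^*$ and that the natural $1$-norming set for $Z$ is $\G_Z \coloneqq (\G_X\times\{0\})\cup(\{0\}\times\G_Y)$, where $\G_X\subset B_{X^*}$ and $\G_Y\subset B_{Y^*}$ are the $1$-norming sets witnessing the $\ACK$ structures of $X$ and $Y$. Topologically, $(\G_Z, w^*)$ is (essentially) the disjoint union of $(\G_X,w^*)$ and $(\G_Y,w^*)$, so a non-empty relatively $w^*$-open $U\subset\G_Z$ meets at least one of the two pieces in a non-empty relatively $w^*$-open set; without loss of generality say $U_X \coloneqq \{x^*\in\G_X : (x^*,0)\in U\}$ is non-empty and $w^*$-open in $\G_X$. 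First I would apply Definition~\ref{def_ACK} to $X$, $\eps$ and $U_X$ to obtain $V^X\subset U_X$, $x_1^*\in V^X$, $e_X\in S_X$, $F_X\in L(X)$ and the set $V_1^X$ satisfying (I)--(VI) (with (IV)' in the simple case).

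Next I would lift this data to $Z$. Set $e \coloneqq (e_X, 0)\in S_Z$, let $F\coloneqq F_X\oplus 0 \in L(Z)$ (i.e. $F(x,y)=(F_Xx,0)$), put $V \coloneqq \{(v^*,0) : v^*\in V^X\}$ and $x_1^{*Z}\coloneqq(x_1^*,0)$. Then $F^*(x^*,y^*) = (F_X^* x^*, 0)$ and $(I_{Z^*}-F^*)(x^*,y^*) = ((I_{X^*}-F_X^*)x^*, y^*)$. I would check (I)--(VI) one piece at a time: (I), (II), (III) transfer verbatim from the $X$-data because the $Y$-component of $e$ and $x_1^{*Z}$ is zero. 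For (V): given $z^*=(x^*,y^*)\in\G_Z$, either $y^*=0$ and $z^*=(x^*,0)$ with $x^*\in\G_X$, so $\dist(F^*z^*,\aconv\{0,V\})=\dist((F_X^*x^*,0),\aconv\{0,V^X\}\times\{0\})<\eps$ directly from (V) for $X$; or $x^*=0$, in which case $F^*z^*=0\in\aconv\{0,V\}$. For (VI): every $v^*\in V$ is $(v^{*X},0)$ with $v^{*X}\in V^X$, so $|v^*(e)-1| = |v^{*X}(e_X)-1|\le\eps$. For (IV)/(IV)': since $\norm{F^*(x^*,y^*)} = \norm{F_X^* x^*}_{X^*}$ and $\norm{(I_{Z^*}-F^*)(x^*,y^*)} = \norm{(I_{X^*}-F_X^*)x^*}_{X^*} + \norm{y^*}_{Y^*}$, the defining inequality for $V_1^Z$ at a point $(x^*,0)$ reduces exactly to the one for $V_1^X$ at $x^*$, and at a point $(0,y^*)$ it reads $(1-\eps)\norm{y^*}\le 1$, which always holds; hence $\G_Z\setminus V_1^Z \subseteq V^X$-side points not in $V_1^X$, and for such $(x^*,0)$ we have $|(x^*,0)(Fe)| = |x^*(F_Xe_X)|\le\rho_X\le\rho$. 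In the simple case $V_1^X=\G_X$, so $V_1^Z=\G_Z$ and (IV)' holds.

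The only subtle point — and the part I expect to require a little care rather than being the "main obstacle" — is the topological identification: one must confirm that $(\G_Z, w^*)$ really is, up to homeomorphism, the disjoint topological sum of $(\G_X,w^*)$ and $(\G_Y,w^*)$, so that an arbitrary relatively $w^*$-open $U\subset\G_Z$ projects to a genuinely $w^*$-open subset of one of the summands. This follows because the two functionals $(x,y)\mapsto\norm{x}$-type separation and the product structure of $Z$ make $\G_X\times\{0\}$ and $\{0\}\times\G_Y$ both $w^*$-open in $\G_Z$: indeed, in $Z^*=X^*\oplus_1 Y^*$ with the $w^*$-topology, the sets $\{z^*: \norm{z^*}_{Y^*\text{-part}}<1/2\}$ etc. are not $w^*$-open, so instead one argues that any $(x^*,0)\in\G_Z$ with $x^*\in\G_X$ has, by $1$-norming-ness of $\G_X$ and the choice of some $x\in S_X$ nearly attaining $\norm{x^*}$, a $w^*$-neighbourhood in $\G_Z$ whose points all have large $X$-component and hence (being in $\G_Z\subset B_{Z^*}$, a set of norm-$\le 1$ functionals split as $\oplus_1$) zero $Y$-component. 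Granting this splitting, everything else is the coordinatewise bookkeeping sketched above. Finally, the "moreover" statement is immediate: if both $X,Y\in\ACK$ then $\rho_X=\rho_Y=0$ can be taken in the sense that (IV)' holds, and the argument above yields (IV)' for $Z$, so $Z\in\ACK$.
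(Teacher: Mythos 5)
Your proposal is correct and essentially coincides with the paper's proof: it uses the same $1$-norming set $\G=(\G_X\times\{0\})\cup(\{0\}\times\G_Y)$, the same choices $V=V_X\times\{0\}$, $x_1^*=(x_1^*,0)$, $e=(e_X,0)$, $F(x,y)=(F_Xx,0)$, and the same coordinatewise verification of (I)--(VI), with functionals of the form $(0,y^*)$ disposed of exactly as in the paper (they satisfy the $V_1$-inequality, or trivially give $|v^*(Fe)|=0\le\rho$). The only superfluous step --- and the one place where your justification is shaky in degenerate cases, e.g.\ if $0\in\G_X$ and $0$ lies in the $w^*$-closure of $\G_Y$ --- is the claim that $(\G,w^*)$ is a topological disjoint sum of the two pieces: this is not needed, since all your argument (and the paper's) actually uses is that $x^*\mapsto(x^*,0)$ and $y^*\mapsto(0,y^*)$ are $w^*$-continuous, so that $U_X=\{x^*\in\G_X:(x^*,0)\in U\}$ and $U_Y=\{y^*\in\G_Y:(0,y^*)\in U\}$ are relatively $w^*$-open and not both empty.
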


\begin{proof}
 Observe that both $X$ and $Y$ have $\ACK$ structure with parameter $\rho$. Let $\Gamma_X \subset B_{X^*}$ and $\Gamma_Y \subset B_{Y^*}$  be  the corresponding $1$-norming subsets in Definition \ref{def_ACK}.  Then, the set
\[
\Gamma\coloneqq \{(x^*,0): x^*\in  \Gamma_X\} \cup \{ (0, y^*): y^*\in  \Gamma_Y\}
\]
is a $1$-norming subset of $B_{Z^*}$.  Take a non-empty relatively $w^*$-open subset $U\subset\G$. Then, there exist relatively $w^*$-open subsets $U_X\subset\G_X$ and $U_Y\subset\G_Y$ that are not both empty and such that $(U_X\times\{0\})\cup (\{0\}\times U_Y)\subset U$. Without loss of generality we may assume that $U_X \neq \emptyset$.

Fix $\eps>0$. By using Definition~\ref{def_ACK} for $X$, $\eps $, and $U_X$ we obtain  a non-empty subset $V_X\subset U_X$,  $x_1^* \in V_X$, $e_X \in S_{X}$, $F_X \in L(X)$  with the properties (I) -- (VI).  Thus, we can define the  corresponding $V\subset U$, $z_1^* \in V$, $e \in S_{Z}$ and $F \in L(Z)$ as follows:
\[
V\coloneqq \{(x^*,0): x^*\in V_X\}\subset U,\quad z_1^*\coloneqq (x_1^*,0),\quad e\coloneqq (e_X, 0),
\]
and for $(x,y)\in Z$,
\[
F(x,y)\coloneqq(F_X(x),0).
\]

Let us check the required properties. It is clear that $\norm{F} =1$ and that $\norm{Fe} = \norm{{F_X(e_X)}} = 1$, which shows (I). (II) follows easily; $z^*(Fe) = x_1^*(F_Xe_X)= 1$.  Due to the fact that  $(F_X x_1^*,0) = (x_1^*, 0)$, we deduce that $F^*z_1^* = z_1^*$, showing that (III) holds. Now, for every $z^*=(x^*,0) \in V$ with $x^* \in V_{X,1}$ we have
\begin{align*}
\norm{F^*z^*} &+ (1 - \eps)\norm{(I_{Z^*} - F^*)(z^*)} \\
& = \norm{F_X^*x^*} + (1 - \eps)\norm{(I_{X^*} - F_X^*)(x^*)}\\
&\le 1,
\end{align*}
which can be easily deduced {from} $F^*z^*=(F_X^*x^*,0)$. Consequently, for every  $x^* \in V_{X,1}$ we have $z^*=(x^*,0) \in V_{1}$. (Observe that in the case of \emph{simple ACK structure} we have already proved (IV)'). Let $v^*  \in \G \setminus V_1$.  Then, either $v^*=(0, y^*)$, or $v^*=(x^*, 0)$ with $x^* \in \G_X \setminus V_{X,1}$.   On the one hand, when $v^*=(0, y^*)$, we have $|v^*(Fe)| = 0 \leq \rho$. On the other hand, whenever  $v^*=(x^*, 0)$ with $x^* \in \G_X \setminus V_{X,1}$, then $|v^*(Fe)|= |x^*(F_Xe_X)| \le  \rho$, which proves (IV). Now, let ${z^*} \in \G$. Whenever $z^* = (0, y^*)$ we have $F^*z^* = 0$. Otherwise, $z^* = (x^*,0)$ and we have $\dist({F_X^* x^*}, \aconv\{0, {V_X}\}) < \eps$. Thus, in both cases
\[\dist({F^*z^*}, \aconv\{0, V\}) < \eps.\]
Finally, for every $v^* = (x^*, 0) \in V$ we have $|v^*(e) - 1| = |x^*(e_X) - 1| \le  \eps$, which proves (VI) and concludes our proof.
\end{proof}

Recall, that given two normed spaces $X$ and $Y$, one can define their injective tensor product $X\itp Y$, as the completion of $(X\otimes Y,\norm{\cdot}_\varepsilon)$, where
    \[
        \norm{z}_\varepsilon\coloneqq \sup\{|\langle x^*\otimes y^*,z\rangle|\colon x^*\in B_{X^*},\, y^*\in B_{Y^*}\},
    \]
for every $z\in X\otimes Y$ and $\langle x^*\otimes y^*,x\otimes y\rangle\coloneqq  x^*(x)\,y^*(y)$, for every $x\otimes y\in X\otimes Y$ and for every $x^*\in X^*$ and $y^*\in Y^*$.

An important example of such a product is the Banach space $C(K) \itp Y$, which can be naturally identified with $C(K,Y)$, that is, the Banach space of continuous $(Y,\norm{\cdot})$-valued functions defined on $K$, endowed with the supremum norm $\norm{f}=\sup\{\norm{f(t)}: t\in K\}$.

Note that it follows from the definition of the injective norm that if $X_0\subset B_{X^*}$ and $Y_0\subset B_{Y^*}$ are $1$-norming, then for every $z\in X\itp Y$ the following equality holds:
    \[
        \norm{z}_\eps=\sup\{|\langle x^*\otimes y^*,z\rangle|: x^*\in X_0,\, y^*\in Y_0\}.
    \]
Recall also that $\norm{x^*\otimes y^*}_{(X\itp Y)^*} = \norm{x^*} \cdot \norm{y^*}$ for every $x^*\in X^*$ and $y^*\in Y^*$.

This is all the information about tensor products that will be used in Theorem \ref{theo:InjectiveTensorProduct} below.  We refer to Ryan's book \cite{Ryan} for tensor products theory in general and the above definitions and statements in particular.

\begin{theo}\label{theo:InjectiveTensorProduct}
Let $X$ and $Y$ be Banach spaces both of which have $\ACK$ (resp. $\ACK_\rho$) structure. Then,  $X\itp Y$ has  $\ACK$ (resp. $\ACK_\rho$) structure.
\end{theo}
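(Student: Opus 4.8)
The plan is to realize the $\ACK_\rho$ structure of $Z\coloneqq X\itp Y$ via the $1$-norming set $\G\coloneqq\{x^*\otimes y^*\colon x^*\in\G_X,\ y^*\in\G_Y\}$, where $\G_X\subset B_{X^*}$ and $\G_Y\subset B_{Y^*}$ are the $1$-norming sets witnessing the $\ACK$ (resp.\ $\ACK_\rho$) structures of $X$ and $Y$; that $\G$ is $1$-norming for $Z$ is the injective-norm description recalled before the theorem, and $\G\subset B_{Z^*}$ because $\norm{x^*\otimes y^*}_{Z^*}=\norm{x^*}\norm{y^*}$. Using Remark~\ref{rem:ACK-rho-inheritance} we may assume both $X$ and $Y$ have $\ACK_\rho$ structure with the same parameter $\rho$. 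Given $\eps>0$ and a non-empty relatively $w^*$-open $U\subset\G$, I would first observe that the tensor map $m\colon(\G_X,w^*)\times(\G_Y,w^*)\to(\G,w^*)$, $m(x^*,y^*)=x^*\otimes y^*$, is continuous: evaluation of $x^*\otimes y^*$ at an elementary tensor $x\otimes y$ equals $x^*(x)\,y^*(y)$, and evaluation at a general $z\in Z$ is a uniform (in $(x^*,y^*)$) limit of such functions since $\norm{x^*\otimes y^*}\le1$ and $X\otimes Y$ is dense in $Z$. Hence, choosing a point $x_0^*\otimes y_0^*\in U$, one obtains non-empty relatively $w^*$-open $U_X\ni x_0^*$ and $U_Y\ni y_0^*$ with $U_X\otimes U_Y\subset U$. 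Applying Definition~\ref{def_ACK} to $X$ with $(\eps/3,U_X)$ and to $Y$ with $(\eps/3,U_Y)$ yields $V_X\subset U_X$, $x_1^*\in V_X$, $e_X\in S_X$, $F_X\in L(X)$, the set $V_{X,1}$, and the analogous $Y$-objects; I would then set $V\coloneqq\{x^*\otimes y^*\colon x^*\in V_X,\ y^*\in V_Y\}\subset U$, $z_1^*\coloneqq x_1^*\otimes y_1^*\in V$, $e\coloneqq e_X\otimes e_Y\in S_Z$, and let $F\coloneqq F_X\otimes F_Y\in L(Z)$ be the (injective) tensor product operator, so that $\norm{F}\le\norm{F_X}\norm{F_Y}=1$ and $F^*(x^*\otimes y^*)=F_X^*x^*\otimes F_Y^*y^*$ for all $x^*\in X^*$, $y^*\in Y^*$.

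Properties (I)--(III) and (V)--(VI) should then follow by two-factor tensor bookkeeping. Indeed, $Fe=F_Xe_X\otimes F_Ye_Y$ has norm $1$, which gives (I) and $\norm F=1$; $z_1^*(Fe)=x_1^*(F_Xe_X)\,y_1^*(F_Ye_Y)=1$ gives (II); $F^*z_1^*=F_X^*x_1^*\otimes F_Y^*y_1^*=x_1^*\otimes y_1^*=z_1^*$ gives (III); every $v^*=x^*\otimes y^*\in V$ satisfies $v^*(e)=x^*(e_X)\,y^*(e_Y)$ with $|x^*(e_X)|,|y^*(e_Y)|\le1$ and $|x^*(e_X)-1|,|y^*(e_Y)-1|\le\eps/3$, whence $|v^*(e)-1|\le2\eps/3<\eps$, which is (VI); and for (V) one takes $p^*\in\aconv\{0,V_X\}$ with $\norm{F_X^*x^*-p^*}<\eps/3$ and $q^*\in\aconv\{0,V_Y\}$ with $\norm{F_Y^*y^*-q^*}<\eps/3$, notes that $p^*\otimes q^*\in\aconv\{0,V\}$, and estimates $\norm{F^*(x^*\otimes y^*)-p^*\otimes q^*}\le\norm{F_X^*x^*}\,\norm{F_Y^*y^*-q^*}+\norm{F_X^*x^*-p^*}\,\norm{q^*}<2\eps/3<\eps$.

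The real obstacle is property (IV), and it is precisely here that the ``restrictive'' form of (IV) in Definition~\ref{def_ACK} (which bounds $\norm{F^*x^*}$, not merely $|x^*(Fe)|$) is indispensable, since for $w^*=x^*\otimes y^*$ one has $\norm{F^*w^*}=\norm{F_X^*x^*}\,\norm{F_Y^*y^*}$ and only these factor-norms are available. The decisive identity is
\[
(I_{Z^*}-F^*)(x^*\otimes y^*)=\bigl((I_{X^*}-F_X^*)x^*\bigr)\otimes y^*+(F_X^*x^*)\otimes\bigl((I_{Y^*}-F_Y^*)y^*\bigr),
\]
so that, writing $a=\norm{F_X^*x^*}$, $a'=\norm{(I_{X^*}-F_X^*)x^*}$ and $b,b'$ for the $Y$-analogues, $\norm{(I_{Z^*}-F^*)w^*}\le a'+ab'$ and hence $\norm{F^*w^*}+(1-\eps)\norm{(I_{Z^*}-F^*)w^*}\le ab+(1-\eps)a'+(1-\eps)ab'$. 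If $x^*\in V_{X,1}$ and $y^*\in V_{Y,1}$, then $a+(1-\eps/3)a'\le1$ and $b+(1-\eps/3)b'\le1$, and since $\eps/3<\eps$ this forces $(1-\eps)a'\le1-a$ and $(1-\eps)b'\le1-b$, whence $ab+(1-\eps)a'+(1-\eps)ab'\le ab+(1-a)+a(1-b)=1$, i.e.\ $w^*\in V_1$. Contrapositively, any $w^*=x^*\otimes y^*\in\G\setminus V_1$ has $x^*\notin V_{X,1}$ or $y^*\notin V_{Y,1}$, and then property (IV) for the offending factor, combined with $|x^*(F_Xe_X)|,|y^*(F_Ye_Y)|\le1$, yields $|w^*(Fe)|=|x^*(F_Xe_X)|\,|y^*(F_Ye_Y)|\le\rho$; this is (IV), so $Z\in\ACK_\rho$. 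In the simple case $X,Y\in\ACK$ one has $V_{X,1}=\G_X$ and $V_{Y,1}=\G_Y$, so the displayed estimate gives $V_1=\G$, i.e.\ $Z\in\ACK$. Apart from (IV), the only point needing a bit of care is the continuity of $m$ used to pass from a general $w^*$-open $U$ to a product box; the remainder is the two-factor analogue of the $\ell_\infty$-sum argument of Theorem~\ref{theo:Y_1Y_2}, with the additional cross-terms coming from the tensor structure.
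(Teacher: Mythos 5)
Your proposal is correct and follows essentially the same route as the paper's proof: the same $1$-norming set $\G_X\otimes\G_Y$, the same continuity argument for the tensor map to reduce $U$ to a product box, the same choices of $V$, $z_1^*$, $e$ and $F=F_X\otimes F_Y$, and the same key estimate for (IV) showing $V_{X,1}\otimes V_{Y,1}\subset V_1$ (your symmetric splitting of $(I_{Z^*}-F^*)(x^*\otimes y^*)$ is just a reshuffling of the paper's identical cross-term bound, and $\eps/3$ versus the paper's $\eps/2$ is immaterial). No gaps.
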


\begin{proof}
Since $X$ and $Y$ have  $\ACK$ (resp. $\ACK_\rho$) structure, there exist $1$-norming sets $\Gamma_X\subset S_{X^*}$ and $\Gamma_Y\subset S_{Y^*}$ satisfying Definition~\ref{def_ACK}. Define the map $\phi\colon (B_{X^*},w^*)\times (B_{Y^*},w^*)\to (B_{(X\itp Y)^*},w^*)$ by $\phi(x^*,y^*)=x^*\otimes y^*$,  for every $x^*\in B_{X^*}$ and for every $y^*\in B_{Y^*}$.

    First, we shall show that the map $\phi$ is continuous. Let $\{(x^*_\alpha,y^*_\alpha)\}_{\alpha\in\Lambda}$ be a convergent net to $(x^*,y^*)\in B_{X^*}\times B_{Y^*}$. Then, for every $x\otimes y\in X\otimes Y$, we can estimate
    \begin{align*}
        |\langle \phi(x^*_\alpha,y^*_\alpha)&-\phi(x^*,y^*),x\otimes y\rangle|=|x^*_\alpha(x)y^*_\alpha(y)-x^*(x)y^*(y)|\\
                                            &\leq |(x^*_\alpha(x)-x^*(x))y^*_\alpha(y)|+|x^*(x)(y^*_\alpha(y)-y^*(y))|\\
                                            &\leq |x_\alpha^*(x)-x^*(x)|\norm{y_\alpha^*}\norm{y}+\norm{x^*(x)}|y^*_\alpha(y)-y^*(y)|\\
                                            &\leq |x_\alpha^*(x)-x^*(x)|\norm{y}+\norm{x}|y^*_\alpha(y)-y^*(y)|,
    \end{align*}
    which tends to zero. This argument extends easily to every element in $X\otimes Y$ and, in turn, to every $z\in X\itp Y$ (due to the boundedness of the range of the map $\phi$).

   The $1$-norming set $\G$ that we need for our theorem can be introduced as follows:
    \[
        \Gamma\coloneqq \{x^*\otimes y^*\colon x^*\in\Gamma_X,\, y^*\in \Gamma_Y\}=\phi(\Gamma_X\times\Gamma_Y).
    \]

    Let $\eps>0$ and $U$ be a {non-empty} relatively $w^*$-open subset of $\Gamma$. Let $x^*_0\in \Gamma_X$ and $y^*_0\in \Gamma_Y$ be such that $\phi(x^*_0,y^*_0)\in U$. The continuity of $\phi$ ensures that there exist non-empty relatively $w^*$-open  subsets $W_X\subset \Gamma_X$,  $W_Y\subset \Gamma_Y$ such that $x^*_0\in W_X$, $y^*_0\in W_Y$ and $\phi(W_X\times W_Y)\subset U$.

    We can apply Definition~\ref{def_ACK} to $X$ and $Y$, to the former with $\eps/2$ and $W_X$ and to the latter with $\eps/2$ and $W_Y$, to find two {non-empty} sets $V_X\subset W_X$ and $V_Y\subset W_Y$, two functionals $x_1^*\in V_X$ and $y_1^*\in V_Y$, two points $e_X\in S_X$ and $e_Y\in S_Y$ and finally, two operators $F_X\in L(X)$ and $F_Y\in L(Y)$, satisfying respectively the properties (I) -- (VI), or with their corresponding modifications for the the simple $\ACK$ structure. Denote also by $V_{X,1}$ and  $V_{Y,1}$ the corresponding variants for $X$ and $Y$ of the set $V_1$ from property (IV) of  Definition~\ref{def_ACK}.

    Now, define the non-emtpy set $V\subset U$ and corresponding $z_1^*\in V$, $e\in S_{X\otimes_\varepsilon Y}$, $F\in L(X\itp Y)$ as follows: $V\coloneqq \phi(V_X\times V_Y)\subset U$, $z_1^*\coloneqq \phi(x^*_1, y_1^*)=x^*_1\otimes y^*_1$, $e\coloneqq e_X\otimes e_Y$, and $F(x\otimes y)\coloneqq F_X(x)\otimes{F_Y}(y)$ for every $x\otimes y \in X\otimes Y$. It remains to check the properties (I) -- (VI). First, observe that $F^*(x^*\otimes y^*)=F_X^*x^*\otimes F_Y^*y^*$ for every $x^*\in X^*$ and $y^*\in Y^*$.

\item[(I)]
    Let $z$ belong to $B_{X\itp Y}$, then
    \begin{align*}
        \norm{Fz}_\eps &=\sup_{x^*\in\Gamma_X}\sup_{y^*\in\Gamma_Y}|\langle x^*\otimes y^*,Fz\rangle|=\sup_{x^*\in\Gamma_X}\sup_{y^*\in\Gamma_Y}|\langle F^*(x^*\otimes y^*),z\rangle|\\
                    &=\sup_{x^*\in\Gamma_X}\sup_{y^*\in\Gamma_Y}|\langle F_X^*x^*\otimes F_Y^*y^*,z\rangle|\leq \sup_{x^*\in\Gamma_X}\sup_{y^*\in\Gamma_Y}\norm{F_X^*x^*}\norm{F^*_Yy^*}\\
                    &\leq \norm{F_X^*}\norm{F_Y^*}\leq 1,
    \end{align*}
    which implies that $\norm{F}= 1$, since
    \[
        \norm{Fe} = \norm{F_Xe_X\otimes F_Ye_Y} = \norm{F_Xe_X}\norm{F_Ye_Y}=1.
    \]

\item[(II)] $z_1^*(Fe) = (x_1^*\otimes y_1^*)(F_Xe_X\otimes F_Ye_Y)=x^*_1(F_Xe_X)y_1^*(F_Ye_Y)=1 $.

\item[(III)]
    $F^*z_1^* = z_1^*$, since for every $x\otimes y\in X\otimes Y$ we have
    \[
        (F^*z_1^*)(x\otimes y)=(x_1^*\otimes y_1^*)(F_Xx\otimes F_Y y)=(F_X^*x_1^*)(x)(F_Y^*y_1^*)(y),
    \]
    which, in turn, implies that $(F^*z_1^*)(x\otimes y)=x_1^*(x)y_1^*(y)=z^*_1(x\otimes y)$.

\item[(IV)] For $(x^*,y^*)\in\Gamma_X\times\Gamma_Y$, denote $z^* = x^* \otimes y^*$.  Firstly, let us show that for every $x^*\in V_{X,1}$ and  $y^*\in V_{Y,1}$ the functional $z^*$ belongs to $V_1$, i.e., that
\begin{equation*} \label{eqV_1-tenz}
 \norm{F^*z^*} + (1 - \eps)\norm{(I_{(X\itp Y)^*} - F^*)(z^*)} \le 1.
\end{equation*}
First of all, observe that
\begin{align*}
\|x^* &\otimes y^* - F_X^*x^*\otimes F_Y^*y^*\|=\\
&=\norm{x^* \otimes (y^* -  F_Y^*y^*) -  (x^*  - F_X^*x^*)\otimes F_Y^*y^*}\\
&\leq \norm{ y^* -  F_Y^*y^*} +  \norm{F_Y^*y^*}\norm{(x^*  - F_X^*x^*) }.
\end{align*}
Therefore,
    \begin{align*}
 &\norm{F_X^*x^*} \, \norm{F_Y^*y^*} + (1 - \eps)\norm{x^* \otimes y^* - F_X^*x^*\otimes F_Y^*y^*} \\
 &=\norm{F_Y^*y^*} \bigl(  \norm{F_X^*x^*} + (1 - \eps)\norm{(x^*  - F_X^*x^*) }\bigr) + (1 - \eps)\norm{ y^* -  F_Y^*y^*}  \\
 &\le \norm{F_Y^*y^*}  + (1 - \eps)\norm{ y^* -  F_Y^*y^*}  \le 1.
     \end{align*}
This implies that for every  $z^* = x^* \otimes y^* \in \G \setminus V_1$
 we have two possibilities:  either $x^*\notin V_{X,1}$  or $y^* \notin V_{Y,1}$. By symmetry, it is sufficient to consider $x^*\notin V_{X,1}$. In this case $|x^*(F_Xe_X)|\le\rho$, so
 $$
 |z^*(Fe)|=|{x}^*(F_Xe_X)| \, |{y}^*(F_Ye_Y)| \le |x^*(F_Xe_X)|\le\rho.
 $$
\item[(V)]
    We shall show that $\dist(F^*z^*, \aconv\{0, V\}) < \eps$ for every $z^*= x^*\otimes y^* \in \G$. Due to the facts that $\dist( F_X^*x^*, \aconv\{0, V_X\}) < \eps/2 $ and that $\dist( F_Y^*y^*, \aconv\{0, V_Y\}) < \eps/2 $, there exist $v_X^*\in \aconv\{0, V_X\}$ and $v_Y^*\in \aconv\{0, V_Y\}$ such that  $\norm{F_X^*x^*-v_X^*}< \eps/2$ and $\norm{F_Y^*y^*-v_Y^*}< \eps/2$. Then $v^* \coloneqq v_X^*\otimes v_Y^*$ belongs to $ \aconv\{0, V\}$ and
    \begin{align*}
        \norm{F^*z^* - v^*}&\le \norm{(F_X^*x^*-v_X^*)\otimes F_Y^*y^*}+\norm{v_X^*\otimes (F_Y^*y^*-v_Y^*)} \\
               &\le \norm{F_X^*x^*-v_X^*}\norm{F_Y^*y^*}+\norm{v_X^*}\norm{F_Y^*y^*-v_Y^*}\le \eps.
    \end{align*}
 \item[(VI)]
    For every $v^* = x^*\otimes y^* \in V$ we get
    \begin{align*}
        |v^*(e) - 1| &= |x^*(e_X)y^*(e_Y)-1| \le |x^*(e_X)y^*(e_Y)-y^*(e_Y)|\\
                     &+|y^*(e_Y)-1|\leq \frac{\eps}{2}|y^*(e_Y)|+\frac{\eps}{2}\le \eps.
    \end{align*}
This finishes the proof.
\end{proof}


\subsection{Sup-normed spaces of vector-valued functions}

 As we mentioned in the introduction, Acosta,   Becerra Guerrero,  Garc\'{\i}a,  Kim, and Maestre considered  A-BPBp in spaces of continuous vector-valued functions. Let us recall their result explicitly. Here, as usual, $\sigma(Z, \Delta)$ denotes the weakest topology on $Z$ in which all elements of $\Delta \subset Z^*$ are continuous.

\begin{theo}[{\cite[Theorem 3.1]{aco-guer-gar-kim-mae}}] \label{theo:C(K,Y)}
Let $X, Z$  be  Banach spaces, $K$ be a compact Hausdorff topological space. Let $Z$ satisfy property $\beta$ for the subset of functionals $\Delta = \{z^*_\alpha: \alpha \in \Delta\}$. Let $\tau \supseteq \sigma(Z, \Delta)$ be a linear topology on $Z$ dominated by the norm topology. Then for every closed operator ideal $\mathcal I$ contained in the ideal of Asplund operators, we have that $(X, C(K,(Z,\tau)))$ has the Bishop--Phelps--Bollob\'as property for operators from $\mathcal I$.
\end{theo}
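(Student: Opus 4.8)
The plan is to deduce the statement from Theorem~\ref{theo:thetheoremainFAnewnew} by showing that $Y\coloneqq C(K,(Z,\tau))$ has $\ACK_\rho$ structure, where $\rho\in[0,1)$ is the constant appearing in property $\beta$ of $Z$. Granting this, the conclusion is quick: since $\mathcal I$ is contained in the ideal of Asplund operators, every operator in $\mathcal I$ acting from $X$ to $Y$ is $\G$-flat (Example~\ref{example-asplund-are-G-Flat} with $\G\subset B_{Y^*}$), and the component $\mathcal I(X,Y)\subset L(X,Y)$, being a linear subspace that contains the finite-rank operators and is stable under composition $F\circ T$ with $F\in L(Y)$, is a $\G$-flat ideal in the sense of Definition~\ref{def_Gflat-ideal}. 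Hence, by Theorem~\ref{theo:thetheoremainFAnewnew}, for $T\in S_{\mathcal I(X,Y)}$ and $x_0\in S_X$ with $\norm{Tx_0}>1-\delta$ (and $\delta\le 1/2$) there are $u_0\in S_X$ and $S\in S_{\mathcal I(X,Y)}$ with $\norm{Su_0}=1$ and $\max\{\norm{x_0-u_0},\norm{T-S}\}<\sqrt{2\delta}\bigl(1+\tfrac{2}{1-\rho+\sqrt{2\delta}}\bigr)$; choosing $\delta=\delta(\eps)$ so small that this bound is $<\eps$ gives the Bishop--Phelps--Bollob\'as property for operators from $\mathcal I$ for the pair $(X,C(K,(Z,\tau)))$.

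To produce the $\ACK_\rho$ structure, for $t\in K$ and $\alpha\in\Delta$ let $\delta_t\otimes z^*_\alpha\in Y^*$ denote the functional $g\mapsto z^*_\alpha(g(t))$, and put $\G\coloneqq\{\delta_t\otimes z^*_\alpha: t\in K,\ \alpha\in\Delta\}\subset B_{Y^*}$. Since $\norm{z}=\sup_\alpha|z^*_\alpha(z)|$ for $z\in Z$ and $\norm{g}=\sup_{t}\norm{g(t)}$ for $g\in Y$, the set $\G$ is $1$-norming. The topology $\tau$ enters only here: because $\sigma(Z,\Delta)\subseteq\tau$, each $z^*_\alpha$ is $\tau$-continuous, so for every $g\in Y$ the scalar map $t\mapsto z^*_\alpha(g(t))$ is continuous on $K$, that is, $t\mapsto\delta_t\otimes z^*_\alpha$ is $w^*$-continuous from $K$ into $Y^*$. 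Consequently, given a non-empty relatively $w^*$-open $U\subset\G$ and a point $\delta_{t_0}\otimes z^*_{\alpha_0}\in U$, there is an open neighbourhood $W$ of $t_0$ in $K$ with $\{\delta_t\otimes z^*_{\alpha_0}: t\in W\}\subset U$.

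Now fix $\eps\in(0,1)$ (the case $\eps\ge1$ reduces to this one, all of (I)--(VI) being weaker for larger $\eps$). Apply Lemma~\ref{lem-e-f} to $C(K)$, which is an $\ACK$-subalgebra of $C_b(K)$ (for a non-empty open $W\subset K$ and $0<\eps<1$, a $[0,1]$-valued Urysohn function peaking inside $W$ and vanishing off $W$ works, since $[0,1]\subset\st_\eps$): with the open set $W$ and $\eps$ it yields an open $W_0\subset W$ with $t_0\in W_0$ and functions $f,e_\algebra\in C(K)$ such that $f(t_0)=\norm{f}=1=e_\algebra(t_0)=\norm{e_\algebra}$, $|f(t)|<\eps$ on $K\setminus W_0$, $|1-e_\algebra(t)|<\eps$ on $W_0$, and $f(K)\subset\st_\eps$. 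Let $x_{\alpha_0}\in S_Z$, $z^*_{\alpha_0}\in S_{Z^*}$ be the pair from property $\beta$, so $z^*_{\alpha_0}(x_{\alpha_0})=1$ and $|z^*_\alpha(x_{\alpha_0})|\le\rho$ for $\alpha\ne\alpha_0$. Define
\[
V\coloneqq\{\delta_t\otimes z^*_{\alpha_0}: t\in W_0\}\subset U,\qquad x_1^*\coloneqq\delta_{t_0}\otimes z^*_{\alpha_0}\in V,\qquad e\coloneqq\bigl(t\mapsto e_\algebra(t)x_{\alpha_0}\bigr)\in S_Y,
\]
and $F\in L(Y)$ by $(Fg)(t)\coloneqq f(t)\,z^*_{\alpha_0}(g(t))\,x_{\alpha_0}$ (these are norm-continuous $Z$-valued functions, hence belong to $Y$ because $\tau$ is dominated by the norm). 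Then $\norm{F}\le1$, one computes $F^*(\delta_t\otimes z^*_\alpha)=f(t)\,z^*_\alpha(x_{\alpha_0})\,(\delta_t\otimes z^*_{\alpha_0})$, and $(Fe)(t)=f(t)e_\algebra(t)x_{\alpha_0}$. From these formulae properties (I)--(VI) of Definition~\ref{def_ACK} are each checked in a couple of lines: (I)--(III) at the point $t_0$; for (IV) the Stolz estimate $|f(t)|+(1-\eps)|1-f(t)|\le 1$ shows every $\delta_t\otimes z^*_{\alpha_0}$ lies in $V_1$, while for $\alpha\ne\alpha_0$ one has $|(\delta_t\otimes z^*_\alpha)(Fe)|=|f(t)||e_\algebra(t)||z^*_\alpha(x_{\alpha_0})|\le\rho$; for (V) one splits according to whether $t\in W_0$ ($F^*(\delta_t\otimes z^*_\alpha)$ is then a scalar of modulus $\le1$ times $\delta_t\otimes z^*_{\alpha_0}\in V$) or $t\notin W_0$ (then $\norm{F^*(\delta_t\otimes z^*_\alpha)}<\eps$); and (VI) is just $|1-e_\algebra(t)|<\eps$ on $W_0$. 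Thus $Y\in\ACK_\rho$ (and $Y\in\ACK$ when $\rho=0$), which completes the argument.

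The hard part is not the individual verifications but arranging them to hold at once: a single operator $F$ must witness all of (I)--(VI), so its $C(K)$-factor $f(\cdot)$ is chosen to carry the Stolz-region peaking supplied by the uniform-algebra lemmas while its $Z$-factor $z\mapsto z^*_{\alpha_0}(z)x_{\alpha_0}$ carries the $\rho$-separation of property $\beta$. The other point requiring care is the translation of the relatively $w^*$-open set $U\subset\G$ into an honest open subset of $K$ on which Lemma~\ref{lem-e-f} can be invoked, and this is the sole place where the hypothesis $\sigma(Z,\Delta)\subseteq\tau$ is used.
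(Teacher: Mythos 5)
Your proposal is correct and follows essentially the same route as the paper: the paper obtains this statement as Corollary~\ref{cor:separ-cont1.1} by showing $C(K,(Z,\tau))\in\ACK_\rho$ via the general Theorem~\ref{theo:unalg->ACKvector-valued} (with $F$ built from a Stolz-region peak function in $C(K)$ times the rank-one property-$\beta$ operator $z\mapsto z^*_{\alpha_0}(z)x_{\alpha_0}$, and with the $1$-norming set $\{\delta_t\otimes z^*_\alpha\}$) and then invokes Theorem~\ref{theo:thetheoremainFAnewnew}. The only difference is organizational: you inline the specialization and dispose of the quasi-continuity hypothesis~(iv) by using the $w^*$-discreteness of $\{z^*_\alpha\}$ directly, which is exactly what the paper's Proposition~\ref{prop:separ-cont1} accomplishes in its more general framework.
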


The next proposition together with Theorem~\ref{theo:thetheoremainFAnewnew} generalize Theorem \ref{theo:C(K,Y)} for the case of {$Z$} endowed with its strong topology.

\begin{pro}\label{cor:C(K,Y)}
Let $K$ be a compact Hausdorff topological space. Then,
\[
(Y \in \ACK_\rho) \Rightarrow (C(K,Y)  \in \ACK_\rho);
\]
\[
(Y \in \ACK) \Rightarrow (C(K,Y)  \in \ACK).
\]
\end{pro}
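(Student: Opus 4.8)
The plan is to reduce the statement to Theorem~\ref{theo:InjectiveTensorProduct} by means of the classical representation of $C(K,Y)$ as an injective tensor product. Recall from \cite{Ryan} that for every compact Hausdorff space $K$ and every Banach space $Y$ there is a canonical isometric isomorphism $C(K)\itp Y \cong C(K,Y)$, under which the elementary tensor $f\otimes v$ is the function $t\mapsto f(t)v$ and the functional $\delta_t\otimes y^*$ is the evaluation $g\mapsto y^*(g(t))$. Hence it suffices to prove that $C(K)\itp Y$ has $\ACK$ (resp.\ $\ACK_\rho$) structure whenever $Y$ does.

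Next I would note that $C(K)$ is itself a uniform algebra---a norm-closed subalgebra of $C(K)$ containing the constants and separating the points of $K$---so Corollary~\ref{cor:unalg->ACK} applies and gives that $C(K)$ has simple $\ACK$ structure; in particular $C(K)\in\ACK$.

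The conclusion is then immediate from Theorem~\ref{theo:InjectiveTensorProduct} applied to the pair of factors $C(K)$ and $Y$. If $Y\in\ACK$, both factors have $\ACK$ structure, so $C(K)\itp Y\in\ACK$, i.e.\ $C(K,Y)\in\ACK$. If $Y\in\ACK_\rho$, then by Remark~\ref{rem:ACK-rho-inheritance} we also have $C(K)\in\ACK\subset\ACK_\rho$, so both factors have $\ACK_\rho$ structure and Theorem~\ref{theo:InjectiveTensorProduct} gives $C(K)\itp Y\in\ACK_\rho$, i.e.\ $C(K,Y)\in\ACK_\rho$.

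I do not expect any genuine obstacle here: the argument is just a matter of combining previously established facts, and the only step that needs even a line of justification is the isometric identification $C(K,Y)=C(K)\itp Y$, which is completely standard. If one preferred a self-contained route one could instead specialize the proof of Theorem~\ref{theo:InjectiveTensorProduct} to this case---taking $\G=\{\delta_t\otimes y^*: t\in K,\ y^*\in\G_Y\}$ and feeding in the $\ACK$-subalgebra data for $C(K)$ supplied by Corollary~\ref{cor:unalg->ACK}---but threading the argument through the tensor product theorem is shorter and avoids repeating the same verifications of properties (I)--(VI).
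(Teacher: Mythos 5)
Your proposal is correct and takes essentially the same route as the paper, whose entire proof is the identification $C(K,Y)\cong C(K)\itp Y$ combined with Corollary~\ref{cor:unalg->ACK} and Theorem~\ref{theo:InjectiveTensorProduct}. Your appeal to Remark~\ref{rem:ACK-rho-inheritance} in the $\ACK_\rho$ case merely makes explicit a step the paper leaves implicit.
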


\begin{proof}
Bearing in mind {Corollary~\ref{cor:unalg->ACK}} and Theorem~\ref{theo:InjectiveTensorProduct}, the fact that the space $C(K) \itp Y$ is isometric to $C(K,Y)$ concludes the proof.
\end{proof}

Our aim now is showing a generalization of Theorem \ref{theo:C(K,Y)} in the spirit of the $\ACK$ structure, that covers all topologies $\tau$ from that theorem. In order to do this we need some terminology.

For a topological space $T$ and a Banach space $Z$ denote by $C_{\bof}(T, Z)$ the space of all bounded openly fragmented (see Definiton~\ref{def_op-fragm-func}) functions $f\colon T \to Z$ equipped with the $\sup$-norm. For a topology $\tau$ on $Z$ denote by $C_{b}(T, (Z, \tau))$ the space of bounded $\tau$-continuous functions $f\colon T \to Z$ equipped with the sup-norm.

\begin{defi}
Let $Z \in \ACK_\rho$ and let $\G \subset B_{Z^*}$ be the corresponding $1$-norming set.   A  linear topology $\tau$  on $Z$ is said to be \emph{$\G$-acceptable}, if it is dominated by the norm topology and dominates $\sigma(Z, \G)$.
\end{defi}

The following result simultaneously generalizes our Theorem \ref{theo:unalg->ACK} and Theorem \ref{theo:C(K,Y)}. We state the result in the most general settings, which makes the statement bulky. Some ``elegant'' partial cases will be given as corollaries.

\begin{theo} \label{theo:unalg->ACKvector-valued}
   Let $\algebra \subset C_{b}(\G_\algebra)$ be an $\ACK$-subalgebra. Let $Z$ be a Banach space and $\mathcal{O}\subset L(Z)$ such that $Z \in \mathcal{O}$-$\ACK_\rho$ ($Z \in \mathcal{O}$-$\ACK$) with $\G_Z \subset B_{Z^*}$ being the corresponding $1$-norming set. Finally, let $\tau$ be a $\G_Z$-acceptable topology on $Z$. Let $X  \subset  C_{b}(\G_\algebra, (Z, \tau))$ be a Banach space satisfying the following properties:
   \begin{enumerate}
      \item For every $x \in X$ and $f \in\algebra$ the function $fx$ belongs to $X$.
      \item $X$ contains all functions of the form $f\otimes z$,  $f \in\algebra$, $z \in Z$.
      \item $F \circ x \in X$ for every $x \in X$ and $F \in \mathcal{O}$.
      \item For every finite collection $\{x_k\}_{k=1}^n \subset X$ the corresponding function of two variables $\varphi \colon \G_\algebra \times (\G_Z, w^*) \to \K^n$, defined by $\varphi (t, z^*) = (z^*(x_k(t)))_{k=1}^n $, is quasi-continuous.
   \end{enumerate}
   Then $X \in \ACK_\rho$ ($X \in \ACK$, respectively) with the corresponding $1$-norming subset of $B_{X^*}$ being $\Gamma=\{\delta_t \otimes z^*: t\in \G_\algebra, z^* \in \Gamma_Z\}$, where the functional  $\delta_t \otimes z^* \in X^*$ acts as follows: $(\delta_t \otimes z^*)(x) = z^*(x(t))$.
\end{theo}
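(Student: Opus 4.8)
The plan is to merge the two model computations already carried out in this section: the scalar uniform-algebra argument of Theorem~\ref{theo:unalg->ACK}, in which $F$ is multiplication by a peak function $f\in\algebra$ whose range lies in a Stolz region, and the ``tensorial'' argument of Theorem~\ref{theo:InjectiveTensorProduct}, in which the witnesses for a product space are assembled from the witnesses for the two factors. The $1$-norming set is the one prescribed in the statement, $\G=\{\delta_t\otimes z^*:t\in\G_\algebra,\,z^*\in\G_Z\}$; it is $1$-norming because $X$ carries the supremum norm and $\G_Z$ is $1$-norming for $Z$, so $\norm{x}=\sup_t\norm{x(t)}_Z=\sup_t\sup_{z^*\in\G_Z}|z^*(x(t))|=\sup_{\xi\in\G}|\xi(x)|$, and clearly $\G\subset B_{X^*}$. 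Given $\eps>0$ (we may assume $\eps<1$) and a non-empty relatively $w^*$-open $U\subset\G$, I intend to produce: from Step~1 below, a non-empty ``rectangle'' $\{\delta_t\otimes z^*:t\in W_\algebra,\,z^*\in W_Z\}\subset U$ with $W_\algebra\subset\G_\algebra$ open and $W_Z\subset\G_Z$ relatively $w^*$-open; then, from Lemma~\ref{lem-e-f} applied to $W_\algebra$ with parameter $\eps/2$, a non-empty open $W_0\subset W_\algebra$, a point $t_0\in W_0$ and functions $f,e_\algebra\in\algebra$ with $f(t_0)=\norm f=e_\algebra(t_0)=\norm{e_\algebra}=1$, $|f|<\eps/2$ off $W_0$, $|1-e_\algebra|<\eps/2$ on $W_0$ and $f(\G_\algebra)\subset\st_{\eps/2}$; and, from Definition~\ref{def_ACK} for $Z$ applied with parameter $\eps/2$ and $W_Z$, a non-empty $V_Z\subset W_Z$, $z_1^*\in V_Z$, $e_Z\in S_Z$, $F_Z\in\mathcal O$ satisfying (I)--(VI) (and (IV)' in the simple case), with $V_{Z,1}$ the corresponding set from (IV). The candidate witnesses for $X$ are then $V:=\{\delta_t\otimes z^*:t\in W_0,\,z^*\in V_Z\}\subset U$, $x_1^*:=\delta_{t_0}\otimes z_1^*\in V$, $e:=e_\algebra\otimes e_Z$, and the operator $F\in L(X)$ given by $(Fx)(t):=f(t)\,F_Z(x(t))$. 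Here hypotheses~(1)--(3) are used precisely to see that $Fx\in X$ (apply (3) to $F_Z\in\mathcal O$, then (1) to multiply by $f\in\algebra$) and that $e\in X$ (a function of the form $f\otimes z$, by (2)), while $\norm F\le\norm f\,\norm{F_Z}\le1$ is immediate and $(Fe)(t)=f(t)e_\algebra(t)F_Ze_Z$ gives $\norm{Fe}=1$.

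The step I expect to be the real obstacle is Step~1 itself. In Theorems~\ref{theo:unalg->ACK} and~\ref{theo:InjectiveTensorProduct} extracting such a rectangle was trivial because the relevant map into the dual was continuous, whereas here the natural surjection $(t,z^*)\mapsto\delta_t\otimes z^*$ need not be $w^*$-to-$w^*$ continuous: the elements of $X$ are only $\tau$-continuous and $\tau$ may be strictly coarser than the norm topology. Hypothesis~(4) is tailor-made for this. I would pick $\delta_{t_0}\otimes z_0^*\in U$ and a basic $w^*$-neighbourhood $\{\xi\in X^*:|\xi(x_k)-(\delta_{t_0}\otimes z_0^*)(x_k)|<\eps_0,\ k=1,\dots,n\}$, with $x_1,\dots,x_n\in X$ and $\eps_0>0$, whose trace on $\G$ is contained in $U$. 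By~(4) the map $\varphi\colon\G_\algebra\times(\G_Z,w^*)\to\K^n$, $\varphi(t,z^*)=(z^*(x_k(t)))_{k=1}^n$, is quasi-continuous, and $\varphi(t_0,z_0^*)=((\delta_{t_0}\otimes z_0^*)(x_k))_{k}$. Applying Definition~\ref{def:quasi-cont} with the whole space $\G_\algebra\times(\G_Z,w^*)$ as the open set, the point $(t_0,z_0^*)$, and the $\eps_0$-ball around $\varphi(t_0,z_0^*)$, I obtain a non-empty open set on which $\varphi$ stays inside that ball; unravelling the definitions, $\delta_t\otimes z^*\in U$ for every $(t,z^*)$ in that set, and shrinking it to a basic rectangle finishes Step~1. (Once the rectangle is available, Lemma~\ref{lem-e-f} is applied inside $W_\algebra$ and Definition~\ref{def_ACK} for $Z$ inside $W_Z$, as described above.)

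It then remains to verify (I)--(VI) of Definition~\ref{def_ACK} for these witnesses, which I expect to be routine recombinations of the verifications in Theorems~\ref{theo:unalg->ACK} and~\ref{theo:InjectiveTensorProduct}. The two formulas driving everything are $F^*(\delta_t\otimes z^*)=f(t)\,(\delta_t\otimes F_Z^*z^*)$ and hence $(I_{X^*}-F^*)(\delta_t\otimes z^*)=\delta_t\otimes(z^*-f(t)F_Z^*z^*)$, together with $\norm{\delta_t\otimes w^*}_{X^*}\le\norm{w^*}_{Z^*}$ and $(Fe)(t)=f(t)e_\algebra(t)F_Ze_Z$. Properties (I)--(III) drop out of $f(t_0)=e_\algebra(t_0)=1$ and (I)--(III) for $Z$. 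For (IV), the Stolz estimate $|f(t)|+(1-\eps)|1-f(t)|\le1$ (valid since $f(\G_\algebra)\subset\st_{\eps/2}\subset\st_\eps$) combined with the triangle inequality $\norm{z^*-f(t)F_Z^*z^*}\le\norm{(I_{Z^*}-F_Z^*)z^*}+|1-f(t)|\,\norm{F_Z^*z^*}$ gives $\norm{F^*\xi}+(1-\eps)\norm{(I_{X^*}-F^*)\xi}\le\norm{F_Z^*z^*}+(1-\eps)\norm{(I_{Z^*}-F_Z^*)z^*}$ for $\xi=\delta_t\otimes z^*\in\G$; thus $z^*\in V_{Z,1}$ forces $\xi\in V_1$, so any $\xi=\delta_t\otimes z^*\in\G\setminus V_1$ has $z^*\notin V_{Z,1}$ and therefore $|\xi(Fe)|=|f(t)|\,|e_\algebra(t)|\,|z^*(F_Ze_Z)|\le|z^*(F_Ze_Z)|\le\rho$ (and in the simple case $V_{Z,1}=\G_Z$ yields $V_1=\G$, giving $X\in\ACK$). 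Property (V) splits into the case $t\notin W_0$, where $\norm{F^*\xi}\le|f(t)|<\eps$ already, and $t\in W_0$, where a witness $\sum_k\lambda_kw_k^*\in\aconv\{0,V_Z\}$ that $\tfrac{\eps}{2}$-approximates $F_Z^*z^*$ is pushed forward, scaling the coefficients by $f(t)$, to $\sum_k(f(t)\lambda_k)(\delta_t\otimes w_k^*)\in\aconv\{0,V\}$, which is within $|f(t)|\tfrac{\eps}{2}<\eps$ of $F^*\xi$. Finally (VI) is the product estimate $|e_\algebra(t)z^*(e_Z)-1|\le|e_\algebra(t)-1|+|e_\algebra(t)|\,|z^*(e_Z)-1|\le\tfrac{\eps}{2}+\tfrac{\eps}{2}$ for $\delta_t\otimes z^*\in V$ — which is exactly why Lemma~\ref{lem-e-f} and Definition~\ref{def_ACK} for $Z$ were invoked with $\eps/2$ rather than $\eps$.
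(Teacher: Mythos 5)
Your proposal is correct and follows essentially the same route as the paper's proof: the same $1$-norming set $\Gamma$, the same use of hypothesis (iv) (quasi-continuity) to extract a rectangle $W_\algebra\times W_Z$ whose image lies in $U$, the same witnesses $V$, $x_1^*=\delta_{t_0}\otimes z_1^*$, $e=e_\algebra\otimes e_Z$ and $(Fx)(t)=f(t)F_Z(x(t))$ obtained by combining Lemma~\ref{lem-e-f} with the $\ACK$ structure of $Z$ (each at parameter $\eps/2$), and the same verifications of (I)--(VI), including the Stolz-region estimate for (IV). The only cosmetic difference is that you apply quasi-continuity directly on the whole product space rather than first passing to the auxiliary sets $B$ and $D$ as the paper does, which changes nothing of substance.
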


\begin{proof}
Fix $\eps> 0$ and a non-empty relatively $w^*$-open subset $U \subset \Gamma$. Let $t_0\in\G_\algebra$ and $z_0^*\in\G_Z$ be   such that $\delta_{t_0}\otimes z_0^*\in U$.
Since $U$ is relatively $w^*$-open, there exist $\{x_k\}_{k=1}^n \subset X$ such that $\delta_t \otimes z^*\in\G$ belongs to $U$ whenever
\[
   \max_{1\leq k\leq n} |\langle (\delta_{t_0}\otimes z_0^*)-(\delta_t\otimes z^*), x_k\rangle| < 1.
\]
Consider the non-emtpty open set
\[
   B\coloneqq \{t\in \G_\algebra \colon |z_0^*(x_k(t))-z_0^*(x_k(t_0))|< 1\; \text{ for }1\leq k\leq n\},
\]
and define the following non-empty relatively $w^*$-open subset of $\G_Z$:
\[
   D\coloneqq \{z^* \in \G_Z: |z^*( x_k(t_0)) - z_0^*( x_k(t_0))| < 1\; \text{ for }1\leq k\leq n\}.
\]
Using property (iv) for  $ \{x_k\}_{k=1}^n \subset X$ we can find a non-empty open subset $B_1 \subset B$ and a non-empty relatively $w^*$-open subset $D_1 \subset D$ such that for every $t \in B_1$ and every $z^* \in D_1$ it holds
\[
   \max_{1\leq k\leq n}|z^*( x_k(t)) - z_0^*( x_k(t_0))| < 1.
\]
Define the non-empty subset $W\coloneqq \{\delta_t \otimes z^*: t \in B_1, z^* \in D_1\} \subset \Gamma$. It is clear that $W \subset U$.

By applying Definition~\ref{def_ACK} to $Z$, $\G_Z$, $D_1$ and $(\eps/2)$, we get $V_Z \subset D_1$, $z_1^*\in V_Z$, $e_Z \in S_{Z}$ and $F_Z \in\mathcal{O}$ satisfying (I) -- (VI). Denote also $V_{Z,1} \subset \G_Z$, the subset that appears in property (IV) (in the case of $Z \in \ACK$ we have $V_{Z,1} = \G_Z$).  By applying Lemma~\ref{lem-e-f} to $\algebra$, $\G_\algebra$, the non-empty open set $B_1$ and $(\eps/2)$, we find a non-empty subset $B_2 \subset B_1$, functions $f_0$, $e_\algebra$ (both belonging to $\algebra$) and $s_0\in B_2$,  satisfying its conclusions.

Finally,  let us define the requested {non-empty} subset $V \subset \aj{U}$ and corresponding $x_1^*\in V$, $e\in S_{X}$, $F\in L(X)$ as follows:
\begin{align*}
   V    &\coloneqq \{\delta_t\otimes z^*\colon t\in B_2, z^*\in V_Z\}\subset W \subset U,\\
   x_1^*&\coloneqq \delta_{s_0}\otimes z_1^*,\quad e(t) \coloneqq e_\algebra(t)  e_Z, \text{ for every } t\in\G_\algebra
\end{align*}
(condition (ii)  implies $e \in X$), and
\[
   (Fx)(t)\coloneqq f_0(t)F_Z(x(t)),
\]
for every $x \in X$ and for every $t \in \G_\algebra$. Conditions (i) and (iii) ensure that $F(x) \in X$. Observe that for every $x^*= \delta_t \otimes z^* \in \G$
\[
   F^*x^* = f_0(t)\left(\delta_t \otimes F_Z^* z^*\right).
\]
It remains to check the properties (I) -- (VI).
\item[(I)] It is clear that $\norm{F} =\norm{F_Z}=1$ and $\norm{Fe} = \norm{f_0e_\algebra}\norm{F_Z(e_Z)} = 1$.
\item[(II)] $x_1^*(Fe) = z_1^*(f_0(s_0)e_\algebra(s_0)F_Z(e_Z))= 1$.
\item[(III)] $F^*x_1^* = x_1^*$, since for every $x \in X$ we have
   \[
      (F^*x_1^*)(x) =  z_1^*\left(f_0(s_0)F_Zx(s_0)\right) =(F_Z^* z_1^*)(x(s_0))=  z_1^*(x(s_0))=x_1^*(x).
   \]
\item[(IV)]
   For every $x^* \in \G$, we have $x^* = \delta_t \otimes z^*$, $t\in \G_\algebra$ and $z^*\in \G_Z$. First, consider the case  $z^*\in V_{Z,1} $ and observe that
   \begin{align*}
   \norm{(I_{X^*} - F^*)(x^*)}&=\norm{z^* - f_0(t)F_Z^*z^*}  \\
      &\le|1-f_0(t)|\norm{z^*} + |f_0(t)|\cdot\norm{(I_{Z^*} - F_Z^*)(z^*)}\\
      &= |f_0(t)|\cdot \norm{(I_{Z^*} - F_Z^*)(z^*)} + |1-f_0(t)|.
   \end{align*}
   Therefore, in this case
   \begin{align*}
   &\norm{F^*x^*} + (1 - \eps)\norm{(I_{X^*} - F^*)(x^*)}  \\
	          &= |f_0(t)|\cdot\norm{F_Z^*z^*} + (1-\eps)\norm{z^* - f_0(t)F_Z^*z^*}  \\
              &\leq |f_0(t)|\bigl(\norm{F_Z^*z^*} + (1 - \eps)\norm{(I_{Z^*} - F_Z^*)(z^*)} \bigr)+ (1-\eps) |1-f_0(t)|\\
              & \le |f_0(t)| + (1-\eps) |1-f_0(t)|  \le 1.
   \end{align*}
   Whenever $Z \in \ACK$, then $V_{Z,1} = \G_Z$, so the above inequality holds for every $z^*\in \G_Z$. Thus, we have proved (IV)'. If  $Z \in \ACK_\rho$ we still must consider those  $x^*$ belonging to $\G \setminus V_1$. The above inequality implies that  $z^*\notin V_{Z,1}$ and, consequently, $ |z^*(F_Ze_Z)| \le  \rho$ which, in turn, implies that
   \[
      |x^*(Fe)| =  |f_0(t)e_\algebra(t)z^*(F_Ze_Z)| \le  \rho.
   \]

\item[(V)]
   Let $x^*= \delta_t \otimes z^* \in \G$. Recall that $F^*x^* = f_0(t)\delta_t \otimes F_Z^* z^*$. Set $V_\algebra\coloneqq \{\delta_t \colon t\in B_2\}$.  In the proof of Theorem \ref{theo:unalg->ACK} it was proved that for every $t \in \G_\algebra$ it holds
   \[
   \dist(f(t) \delta_t, \aconv\{0, V_\algebra\}) < \frac{\eps}{2}.
   \]
   On the other hand, by our construction, we deduce that
   \[
   \dist\left( F_Z^*z^*, \aconv\{0, V_Z\}\right) < \frac{\eps}{2}.
   \]
   Thus, there exist $a^* \in  \aconv\{0, V_\algebra\}$ and $b^* \in  \aconv\{0, V_Z\}$ such that
   \[
   \norm{f(t) \delta_t - a^*} < \frac{\eps}{2} \text{ and } \norm{F_Z^*z^*- b^*} < \frac{\eps}{2}.
   \]
   In particular, since $a^* \otimes b^*$ belongs to $\aconv\{0, V\}$, we can deduce that
   \begin{align*}
   \dist(F^*x^*, \aconv\{0, V\}) &\le \norm{f_0(t) \delta_t \otimes F_Z^* z^* -  a^*\otimes b^*}  \\
                                 &\le \norm{f_0(t) \delta_t \otimes F_Z^* z^* - f_0(t) \delta_t \otimes b^*} +\\
                                 &+ \norm{f_0(t) \delta_t \otimes b^* -  a^*\otimes b^*} \\
                                 & \le  \norm{ F_Z^* z^* -  b^*} + \norm{f_0(t) \delta_t  -  a^*}  < \eps.
   \end{align*}

\item[(VI)]
   For every $x^* = \delta_t\otimes z^* \in V$ we have  $t\in B_2$ and $z^*\in V_Z$. Consequently, $ |e_\algebra(t) - 1| \le  \frac{\eps}{2}$ and  $ |z^*(e_Z)-1| \le  \frac{\eps}{2}$. From this we  get
   \[
   |x^*(e) - 1| = |e_\algebra(t)z^*(e_Z)-1| = |e_\algebra(t)(z^*(e_Z) -1) + (e_\algebra(t) -1)| \le   \eps,
   \]
   which completes the proof.
\end{proof}

\begin{rem}
Under the hypothesis of the previous theorem, given $F\in L(Z)$ and $f\in\algebra$ we can consider the operators $C_{F}\colon X\to X$ and $P_f\colon X\to X$ defined, respectively, by $C_F(x)= F\circ x$ and $P_f(x)=fx$, for every $x\in X$. Then, if we set $\mathcal{O}'\coloneqq\{C_{F}\circ P_{f} : F\in\mathcal{O},\, f\in\algebra\}$, then $X$ has $\mathcal{O}'$-$\ACK_\rho$ (resp. $\mathcal{O}'$-$\ACK$) structure.
\end{rem}

Conditions (i) -- (iii) in Theorem \ref{theo:unalg->ACKvector-valued} are easily verified in concrete examples. In contrast, condition (iv) looks technical. So, in order to make Theorem \ref{theo:unalg->ACKvector-valued} more applicable, we shall present easy-to-verify sufficient conditions for (iv).

Before passing to these sufficient conditions, observe that the function of two variables $\varphi \colon \G_A \times (\G_Z, w^*) \to \K^n$ from  condition (iv) is separately continuous. Therefore, the role of sufficient condition for (iv) can be played by any theorem about quasi-continuity of a separately continuous function $f\colon U \times V \to W $. {There is a number of such theorems (see  Encyclopedia of Mathematics article \href{https://www.encyclopediaofmath.org/index.php/Separate_and_joint_continuity}{``Separate and joint continuity''} or the introduction to \cite{TBanakh})}. For example,  according to Namioka's  theorem \cite{Namioka1974} this (and a much stronger result) occurs for $U$ being a regular, strongly countably complete topological space, $V$ being a locally compact $\sigma$-compact space and $W$ being a pseudo-metric space. The results of the kind ``separate continuity implies  quasi-continuity'' that we list and apply below do not pretend to be new.

\begin{pro} \label{prop:separ-cont1}
Let $U$, $V$, $W$ be topological spaces, $V$ be discrete and $f\colon U \times V \to W $ be separately continuous. Then, $f$ is continuous (and consequently quasi-continuous).
\end{pro}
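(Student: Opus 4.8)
The plan is to show directly that separate continuity upgrades to (joint) continuity as soon as the second factor is discrete, and then to recall that every continuous map is quasi-continuous (which is essentially immediate from Definition~\ref{def:quasi-cont}). So the only real content is the first assertion. Fix a point $(u_0, v_0) \in U \times V$ and an open neighbourhood $N$ of $f(u_0, v_0)$ in $W$. Since $V$ is discrete, the singleton $\{v_0\}$ is open in $V$, hence $U \times \{v_0\}$ is open in $U \times V$; this is the key observation that makes the second variable harmless.

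Next I would use the separate continuity in the first variable: the map $u \mapsto f(u, v_0)$ is continuous $U \to W$, so the preimage of $N$ under this map is an open set $O \subset U$ containing $u_0$. Then $O \times \{v_0\}$ is an open neighbourhood of $(u_0, v_0)$ in $U \times V$ (being a product of an open set in $U$ with an open set in $V$), and by construction $f(O \times \{v_0\}) = \{f(u, v_0) : u \in O\} \subset N$. Since $(u_0, v_0)$ and $N$ were arbitrary, $f$ is continuous at every point, hence continuous on $U \times V$.

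Finally, I would note that a continuous function between topological spaces is quasi-continuous: given a non-empty open $U' \subset U \times V$, a point $z \in U'$, and a neighbourhood $V'$ of $f(z)$, continuity provides an open neighbourhood $W'$ of $z$ with $f(W') \subset V'$, and then $W' \cap U'$ is the required non-empty open subset of $U'$. This matches Definition~\ref{def:quasi-cont} verbatim, so the proof is complete.

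There is essentially no obstacle here: the statement is a routine topological observation whose whole point is that discreteness of $V$ trivialises the usual difficulty in passing from separate to joint continuity (the genuine difficulty, handled by Namioka-type theorems, only arises when $V$ carries a non-discrete topology). The only thing to be careful about is to phrase the argument so that it is clear we are producing a \emph{product} neighbourhood $O \times \{v_0\}$, which is legitimate precisely because $\{v_0\}$ is open in the discrete space $V$.
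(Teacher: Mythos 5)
Your argument is correct and is the standard one: discreteness of $V$ makes $O\times\{v_0\}$ an open product neighbourhood, so continuity in the first variable alone yields joint continuity, and continuity trivially implies quasi-continuity in the sense of Definition~\ref{def:quasi-cont}. The paper omits a proof of this proposition precisely because it regards it as routine, so your write-up simply supplies the expected elementary details.
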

If $Z$ has property $\beta$, the corresponding $(\G_Z, w^*) $ is a discrete  topological space. Thus, the above proposition guaranties the validity of (iv) of Theorem \ref{theo:unalg->ACKvector-valued} in this case.
\begin{cor} \label{cor:separ-cont1.1}
 Under the conditions of Theorem \ref{theo:C(K,Y)}, $C(K,(Z,\tau)) \in \ACK_\rho$, where $\rho$ is the parameter from the  property $\beta$ of $Z$. If $\beta = 0$, then $C(K,(Z,\tau)) \in \ACK$. In particular, this implies the conclusion of Theorem \ref{theo:C(K,Y)}.
\end{cor}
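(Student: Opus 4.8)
The plan is to obtain this corollary as a special case of Theorem~\ref{theo:unalg->ACKvector-valued}, so the work reduces to identifying the correct data and checking the four hypotheses of that theorem. Set $Y:=C(K,(Z,\tau))$; since $K$ is compact, $Y$ coincides with $C_b(K,(Z,\tau))$. I would apply Theorem~\ref{theo:unalg->ACKvector-valued} with $C(K)$ playing the role of the $\ACK$-subalgebra — by Lemma~\ref{Ur-alg} (and since for $\algebra=C(K)$ one may take $\G_\algebra=K$, the space of multiplicative functionals of $C(K)$), $C(K)$ is an $\ACK$-subalgebra of $C_b(K)$ — with $Z$ as the value space and with $\G_Z:=\Delta$ as its $1$-norming set. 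By the proof of Theorem~\ref{theo:beta->ACK}, $Z$ has $\mathcal{O}$-$\ACK_\rho$ structure with respect to $\G_Z$, where $\rho$ is the parameter of the property~$\beta$ of $Z$ and $\mathcal{O}:=\{z_\alpha^*(\cdot)\,x_\alpha:\alpha\in\Delta\}$ is the (non-empty) family of rank-one operators actually produced there; when $\rho=0$, that same proof gives $Z\in\mathcal{O}$-$\ACK$. Finally, $\tau$ is $\G_Z$-acceptable, being dominated by the norm topology and dominating $\sigma(Z,\Delta)=\sigma(Z,\G_Z)$.

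Next I would verify conditions (i)--(iv) of Theorem~\ref{theo:unalg->ACKvector-valued} for $Y$. Conditions (i) and (ii) are immediate: for $f\in C(K)$, $x\in Y$ and $z\in Z$ the maps $t\mapsto f(t)x(t)$ and $t\mapsto f(t)z$ are $\tau$-continuous, because $\tau$ is a linear topology, so scalar multiplication $\K\times(Z,\tau)\to(Z,\tau)$ is continuous; and they are norm-bounded by $\norm{f}_\infty\norm{x}$, resp.\ $\norm{f}_\infty\norm{z}$. Condition (iii) is the one point where the choice of $\mathcal{O}$ is essential: for $F=z_\alpha^*(\cdot)\,x_\alpha\in\mathcal{O}$ one has $(F\circ x)(t)=z_\alpha^*(x(t))\,x_\alpha$, and $z_\alpha^*\in\Delta$ is $\tau$-continuous precisely because $\tau\supseteq\sigma(Z,\Delta)$; hence $t\mapsto z_\alpha^*(x(t))$ is a continuous scalar function, $F\circ x$ is a bounded $\tau$-continuous $Z$-valued function, and $F\circ x\in Y$. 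For condition (iv), the map $\varphi(t,z^*)=\bigl(z^*(x_k(t))\bigr)_{k=1}^n$ on $K\times(\Delta,w^*)$ is separately continuous — in $t$ because each $x_k$ and each $z^*\in\Delta$ is $\tau$-continuous, in $z^*$ because it is $w^*$-evaluation at the fixed vectors $x_k(t)$ — and property~$\beta$ makes $(\Delta,w^*)$ discrete, so by Proposition~\ref{prop:separ-cont1} the map $\varphi$ is (jointly) continuous, hence quasi-continuous.

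Granting the hypotheses, Theorem~\ref{theo:unalg->ACKvector-valued} yields $Y=C(K,(Z,\tau))\in\ACK_\rho$, with $1$-norming set $\G=\{\delta_t\otimes z^*:t\in K,\ z^*\in\Delta\}$, and $Y\in\ACK$ when $\rho=0$. For the last assertion, I would observe that for every Banach space $X$ and every operator ideal $\mathcal I$ contained in the ideal of Asplund operators (in particular every closed such ideal, as in Theorem~\ref{theo:C(K,Y)}), the space of operators in $\mathcal I$ from $X$ to $Y$ is a $\G$-flat ideal in $L(X,Y)$ in the sense of Definition~\ref{def_Gflat-ideal}: its members are $\G$-flat by Example~\ref{example-asplund-are-G-Flat} (note $\G\subset B_{Y^*}$), it contains all finite-rank operators, and it is stable under left composition with bounded operators on $Y$. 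Applying Theorem~\ref{theo:thetheoremainFAnewnew} with this $Y$ then produces, for each $\eps'>0$, a $\delta(\eps')>0$ depending only on $\eps'$ and $\rho$ (not on $X$ or on the operator) such that any norm-one $T\in\mathcal I$ with $\norm{Tx_0}>1-\delta(\eps')$ can be approximated, together with the point $x_0$, by some $S\in S_{L(X,Y)}\cap\mathcal I$ that attains its norm; this is exactly the Bishop--Phelps--Bollob\'as property for operators from $\mathcal I$ asserted in Theorem~\ref{theo:C(K,Y)}.

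The step I expect to be the genuine obstacle — conceptual rather than computational — is condition (iii): one cannot feed $\mathcal{O}=L(Z)$ into Theorem~\ref{theo:unalg->ACKvector-valued}, since a bounded operator on $Z$ need not be $\tau$-continuous and hence need not carry $C(K,(Z,\tau))$ into itself; the remedy is to restrict to the small ideal $\mathcal{O}$ built from the functionals of $\Delta$, whose $\tau$-continuity is forced by $\tau\supseteq\sigma(Z,\Delta)$. All remaining verifications are routine transfers of the $\ACK$-axioms.
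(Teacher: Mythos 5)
Your proposal is correct and follows exactly the route the paper intends: the corollary is obtained as a special case of Theorem~\ref{theo:unalg->ACKvector-valued} with $\algebra=C(K)$, with $Z\in\mathcal{O}$-$\ACK_\rho$ for the rank-one family $\mathcal{O}=\{z_\alpha^*(\cdot)\,x_\alpha\}$ coming from property $\beta$, and with hypothesis (iv) secured by Proposition~\ref{prop:separ-cont1} because property $\beta$ makes $(\Delta,w^*)$ discrete. You in fact supply more detail than the paper does (the explicit verification of (i)--(iii), the observation that $\mathcal{O}$ must be taken small for (iii) to hold, and the passage to the BPB property via $\G$-flat ideals and Theorem~\ref{theo:thetheoremainFAnewnew}), all of which is consistent with the paper's remarks.
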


Proposition \ref{prop:separ-cont1} also guaranties (iv) of Theorem \ref{theo:unalg->ACKvector-valued} in the case of $\G_\algebra = \N$ (just change the roles of $U$ and $V$ in Proposition \ref{prop:separ-cont1}). If we apply Theorem \ref{theo:unalg->ACKvector-valued} with $\algebra = c_0 \subset C_b(\N) = \ell_\infty$, this leads to the following result:
\begin{cor} \label{cor:separ-cont1.2}
Let $Z \in \ACK_\rho$ ($Z \in \ACK$),  $c_0(Z) \subset X \subset \ell_\infty(Z)$, and $X$ has the following property:  $(Fz_1, Fz_2, \ldots) \in X$ for every $x = (z_1, z_2, \ldots)  \in X$ and $F \in L(Z)$. Then  $X \in \ACK_\rho$ ($X \in \ACK$ respectively).
\end{cor}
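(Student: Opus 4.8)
The plan is to obtain this as a direct application of Theorem~\ref{theo:unalg->ACKvector-valued}, taken with $\algebra = c_0$ viewed as a subalgebra of $C_b(\N) = \ell_\infty$ (so that $\G_\algebra = \N$ carries the discrete topology), with $\mathcal{O} = L(Z)$, and with $\tau$ the norm topology of $Z$. Two preliminary checks are in order. First, $c_0$ is an $\ACK$-subalgebra of $C_b(\N)$ in the sense of Definition~\ref{algebroid subspace}: given a non-empty open $W \subset \N$ and $0 < \eps < 1$, fix $t_0 \in W$ and take $f = \delta_{t_0} \in c_0$; then $f(t_0) = \norm{f}_\infty = 1$, $f(t) = 0 < \eps$ for every $t \in \N \setminus W$, and $f(\N) \subset \{0,1\} \subset \st_\eps$. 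Second, since every function on a discrete space is continuous, $C_b(\N,(Z,\norm{\cdot})) = \ell_\infty(Z)$, so that $X$ (a Banach space in the sup-norm) indeed lies inside $C_b(\G_\algebra,(Z,\tau))$; moreover the norm topology is trivially $\G_Z$-acceptable, being equal to the norm topology and dominating $\sigma(Z,\G_Z)$ because $\G_Z \subset Z^*$.

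The core of the proof is then the verification of conditions (i)--(iv) of Theorem~\ref{theo:unalg->ACKvector-valued} for $X$. Conditions (i) and (ii) hold because the product of a scalar $c_0$-sequence with any element of $\ell_\infty(Z)$ lands in $c_0(Z) \subset X$; in particular $f \otimes z \in c_0(Z) \subset X$ for $f \in c_0$, $z \in Z$. Condition (iii), that $F \circ x \in X$ for $x \in X$ and $F \in \mathcal{O} = L(Z)$, is precisely the stability hypothesis imposed on $X$ in the statement. For condition (iv), fix a finite family $\{x_k\}_{k=1}^n \subset X$ and consider $\varphi\colon \N \times (\G_Z,w^*) \to \K^n$ given by $\varphi(t,z^*) = (z^*(x_k(t)))_{k=1}^n$; it is separately continuous, being $w^*$-continuous in $z^*$ by the definition of the weak-star topology and continuous in $t$ automatically because $\N$ is discrete. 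Exchanging the two factors and applying Proposition~\ref{prop:separ-cont1} with the discrete space $\N$ in the role of $V$ shows that $\varphi$ is in fact jointly continuous, hence quasi-continuous.

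With all the hypotheses verified, Theorem~\ref{theo:unalg->ACKvector-valued} yields $X \in \ACK_\rho$ (resp. $X \in \ACK$ in case $Z \in \ACK$), the associated $1$-norming set being $\G = \{\delta_n \otimes z^*: n \in \N, z^* \in \G_Z\}$, where $(\delta_n \otimes z^*)(x) = z^*(z_n)$ for $x = (z_1,z_2,\dots) \in X$. I do not expect a genuine obstacle in this corollary; the only steps that call for a moment's care are the identification of $C_b(\N,(Z,\norm{\cdot}))$ with $\ell_\infty(Z)$ and the observation that multiplying any element of $\ell_\infty(Z)$ by a $c_0$-sequence always drops it into $c_0(Z) \subset X$, which is exactly what makes (i) and (ii) hold regardless of how large $X$ is inside $\ell_\infty(Z)$.
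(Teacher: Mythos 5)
Your proposal is correct and follows exactly the route the paper indicates: apply Theorem~\ref{theo:unalg->ACKvector-valued} with $\algebra = c_0 \subset C_b(\N) = \ell_\infty$ and $\tau$ the norm topology, verifying condition (iv) via Proposition~\ref{prop:separ-cont1} with the roles of the two factors exchanged since $\N$ is discrete. The paper leaves the routine checks (that $c_0$ is an $\ACK$-subalgebra of $C_b(\N)$, that conditions (i)--(iii) hold) implicit, and your verification of them is accurate.
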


This corollary is applicable to  $c_0(Z)$ and $\ell_\infty(Z)$ themselves and also for some intermediate spaces like $c_0(Z,w)$ of weakly null sequences in $Z$.

\begin{pro} \label{prop:separ-cont1.2}
Let $Z$ be a Banach space, $(\G_\algebra, \tau)$ be a topological space,  $\G_Z \subset (B_{Z^*},w^*)$, and $x_k\colon \G_\algebra \to Z$ for $k \in \{1,2,\ldots, n\}$ be $\tau$-$\sigma(Z, \G_Z)$-continuous and $\tau$-$\norm{\,\cdot\,}$-openly fragmented functions. Then, the function  {$\varphi \colon (\G_\algebra, \tau) \times (\G_Z, w^*) \to \K^n$ given} by  $\varphi(t, z^*) = (z^*(x_k(t)))_{k=1}^n $ is quasi-continuous.
\end{pro}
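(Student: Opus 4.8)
The plan is to verify the definition of quasi-continuity (Definition~\ref{def:quasi-cont}) directly. I would fix a non-empty open set $U\subset(\G_\algebra,\tau)\times(\G_Z,w^*)$, a point $(t_0,z_0^*)\in U$, and a neighborhood $N$ of $\varphi(t_0,z_0^*)$ in $\K^n$; since the sup-norm balls form a base at each point of $\K^n$, it suffices to find, for each $\eps>0$, a non-empty open $W\subset U$ with $\norm{\varphi(t,z^*)-\varphi(t_0,z_0^*)}_\infty<\eps$ for all $(t,z^*)\in W$. Passing to a basic box I may assume $U=U_1\times U_2$ with $U_1$ open in $\G_\algebra$, $U_2$ relatively $w^*$-open in $\G_Z$, $t_0\in U_1$, $z_0^*\in U_2$. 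I would also package the data into the single map $\mathbf{x}=(x_1,\dots,x_n)\colon\G_\algebra\to Z^n$, where $Z^n$ carries the sup-norm: since each $x_k$ is $\tau$-$\norm{\cdot}$-openly fragmented, successively shrinking a given open set for $k=1,\dots,n$ shows that $\mathbf{x}$ is $\tau$-$\norm{\cdot}_\infty$-openly fragmented (alternatively, compose each $x_k$ with the isometric inclusion $Z\hookrightarrow Z^n$ and apply Theorem~\ref{theo:op-fragm-comp}(iii)).

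I would then run three shrinking steps. First, since each $x_k$ is $\tau$-$\sigma(Z,\G_Z)$-continuous and $z_0^*\in\G_Z$, the map $t\mapsto\varphi(t,z_0^*)=(z_0^*(x_k(t)))_{k=1}^n$ is $\tau$-continuous at $t_0$, so I pick a non-empty open $O_1\subset U_1$ with $t_0\in O_1$ and $\norm{\varphi(t,z_0^*)-\varphi(t_0,z_0^*)}_\infty<\eps/3$ on $O_1$. Second, open fragmentedness of $\mathbf{x}$ yields a non-empty open $O_2\subset O_1$ with $\diam(\mathbf{x}(O_2))<\eps/3$ for the sup-norm; fix an anchor point $t_1\in O_2$. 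Then, as every $z^*\in\G_Z$ lies in $B_{Z^*}$, for all $t\in O_2$ and all $z^*\in\G_Z$ one gets $\norm{\varphi(t,z^*)-\varphi(t_1,z^*)}_\infty\le\norm{\mathbf{x}(t)-\mathbf{x}(t_1)}_\infty<\eps/3$, while $t_1\in O_1$ gives $\norm{\varphi(t_1,z_0^*)-\varphi(t_0,z_0^*)}_\infty<\eps/3$. Third, the vectors $x_k(t_1)\in Z$ being now fixed, $z^*\mapsto\varphi(t_1,z^*)$ is $w^*$-continuous, so I pick a relatively $w^*$-open $O_2'\subset U_2$ with $z_0^*\in O_2'$ and $\norm{\varphi(t_1,z^*)-\varphi(t_1,z_0^*)}_\infty<\eps/3$ on $O_2'$.

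Finally I would take $W:=O_2\times O_2'$, which is non-empty (it contains $(t_1,z_0^*)$), open, and contained in $U_1\times U_2\subset U$; for $(t,z^*)\in W$ the triangle inequality through $\varphi(t_1,z^*)$ and $\varphi(t_1,z_0^*)$ gives $\norm{\varphi(t,z^*)-\varphi(t_0,z_0^*)}_\infty<\eps$, so $\varphi(W)\subset N$, proving quasi-continuity. The point requiring the most care is the ordering of the three steps together with the introduction of the anchor $t_1$: open fragmentedness only controls the diameter of $\mathbf{x}$ on some subset, not its location, so one must first exploit continuity against the fixed $z_0^*$ to remain near the target value, then shrink (losing $t_0$ but keeping $\varphi(t_1,z_0^*)$ close to $\varphi(t_0,z_0^*)$ because $t_1\in O_1$) to make the image norm-small uniformly in $z^*$, and only afterwards apply $w^*$-continuity at the fixed point $t_1$. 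Beyond this bookkeeping I do not expect a genuine obstacle — the openly-fragmented hypothesis is precisely what substitutes for joint continuity here.
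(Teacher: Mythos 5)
Your proof is correct and follows essentially the same route as the paper's: first use $\tau$-$\sigma(Z,\G_Z)$-continuity against the fixed $z_0^*$, then shrink by open fragmentedness to make the $x_k$ norm-small in diameter, fix an anchor $t_1$, and finish with $w^*$-continuity at $t_1$. The only difference is cosmetic bookkeeping (a three-term triangle inequality with $\eps/3$ versus the paper's four-term split with $\delta<\eps/4$).
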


\begin{proof}
Fix $(t_0 , z_0^*) \in  \G_\algebra \times \G_Z$. Let $U_\algebra \subset \G_\algebra$,  $U_Z \subset \G_Z$ be open and $w^*$-open neighborhoods of $t_0$ and  $z_0^*$ respectively. Set $U \coloneqq  U_\algebra \times U_Z$. We have to show that, for a given $\eps > 0$, there exist a non-empty open subset $W_\algebra \subset U_\algebra$ and a non-empty relatively $w^*$-open subset $W_Z \subset U_Z$ such that for every $t \in W_\algebra$ and every  $z^* \in W_Z$
\begin{equation} \label{equatz^*( y_k(a^*))}
\max_{1\leq k\leq n}|z^*( x_k(t)) - z_0^*( x_k(t_0))| < \eps.
\end{equation}

Fix $\delta<\eps/4$ and define
    \[
    V_\algebra\coloneqq \left\{t \in U_\algebra \colon \max_{1\leq k\leq n}|z_0^*(x_k(t))-z_0^*(x_k(t_0))|< \delta\right\}.
    \]
The set $V_\algebra \subset U_\algebra$ is a non-emtpy open neighborhood of $t_0$ because of the $\tau$-$\sigma(Z, \G_Z)$ continuity of $x_k$ (the map $z_0^*\circ x_k$ is a $\K$-valued $\tau$-continuous function). Applying inductively the definition of openly fragmented function, we define a non-empty open set $W_\algebra \subset (V_\algebra, \tau)$ in such a way that for all $k = 1, \ldots, n$ it holds
\[
\diam (x_k(W_\algebra)) < \delta.
\]
Fix a $t_1 \in W_\algebra$ and define the non-empty relatively $w^*$-open subset  $W_Z \subset U_Z$ as follows:
\[
    W_Z\coloneqq \left\{z^* \in U_Z \colon  \max_{1\leq k\leq n}|z^*( x_k(t_1)) - z_0^*( x_k(t_1))|< \delta\right\}.
\]

Let us show, for every $t \in W_\algebra$ and every  $z^* \in W_Z$, the validity of inequality~\eqref{equatz^*( y_k(a^*))}:
 \begin{align*}
    	| z_0^*( x_k(t_0)) - z^*( x_k(t))| &\le |z_0^*(x_k(t_0))-z_0^*(x_k(t))|\\
						 &+|z_0^*(x_k(t))-z_0^*(x_k(t_1))|\\
						 &+|z_0^*(x_k(t_1))-z^*(x_k(t_1))|\\
						 &+|z^*(x_k(t_1)) - z^*(x_k(t))|.	
    \end{align*}
The first summand in the right-hand side of the previous inequality does not exceed  $\delta$ since $t \in V_\algebra$. Accordingly, the second and fourth summands are both bounded by $\delta$ since $z_0^*, z^* \in B_{Z^*}$ and $\norm{x_k(t)-x_k(t_1)}<\delta$ since $t, t_1 \in W_\algebra$ and  $\diam (x_k(W_\algebra)) < \delta$. Finally, the corresponding third summand is bounded by $\delta$ since $z^*\in W_Z$. Therefore,
$$
| z_0^*( x_k(t_0)) - z^*( x_k(t))| \le 4 \delta < \eps,
$$
which completes the proof of \eqref{equatz^*( y_k(a^*))} and that of the proposition.
\end{proof}

As an application of the previous proposition we get the following corollaries which contain as a particular case the space $C_w(K,Z)$ of $Z$-valued weakly continuous functions for $Z \in \ACK_\rho$ (or $Z \in \ACK$).

{
\begin{cor}\label{corollary_GVA}
Let $Z \in \mathcal{O}$-$\ACK_\rho$ (or $Z \in \mathcal{O}$-$\ACK$) and $\algebra\subset C(K)$  be a uniform algebra with $K$ being the space of multiplicative functionals on $\algebra$. Fix $\G_Z\subset H\subset Z^*$, where $\G_Z$ is the $1$-norming set given by the ACK structure of $Z$. Denote by $\algebra_{\sigma(Z,H)}(K,Z)$ the following subspace of $C(K,(Z,\sigma(Z,H)))$:
\[
\algebra_{\sigma(Z,H)}(K,Z) = \left\{f\in Z^K\colon z^*\circ f\in \algebra\,\text{ for all }\,z^*\in H\right\}.
\]
Let us assume that
\begin{enumerate}
\item $F^*H\subset H$ for every $F\in\mathcal{O}$.
\item $(f(K),\sigma(Z,H))$ is fragmented by the norm for every $f$ belonging to $\algebra_{\sigma(Z,H)}(K,Z)$.
\end{enumerate}
Then, $\algebra_{\sigma(Z,H)}(K,Z) \in \ACK_\rho$ (resp. $\algebra_{\sigma(Z,H)}(K,Z) \in \ACK$).
\end{cor}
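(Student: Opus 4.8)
The plan is to deduce Corollary~\ref{corollary_GVA} from Theorem~\ref{theo:unalg->ACKvector-valued}, after realizing $\algebra_{\sigma(Z,H)}(K,Z)$ as an isometric copy of a suitable space of vector-valued functions over the ``boundary'' domain supplied by Lemma~\ref{Ur-alg}.

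First I would apply Lemma~\ref{Ur-alg}: since $\algebra\subset C(K)$ is a uniform algebra and $K$ is its space of multiplicative functionals, there is a topological subspace $\G_\algebra\subset K$ for which restriction of functions is an isometry of $\algebra$ onto an $\ACK$-subalgebra $\hat\algebra\subset C_b(\G_\algebra)$; in particular $\norm{h}_{C(K)}=\sup_{\G_\algebra}|h|$ for every $h\in\algebra$. Put $\tau\coloneqq\sigma(Z,H)$ on $Z$; since $H\subset Z^*$ and $\G_Z\subset H$, this topology is dominated by the norm and dominates $\sigma(Z,\G_Z)$, hence it is $\G_Z$-acceptable. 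Now set
\[
X\coloneqq\{\,f|_{\G_\algebra}:f\in\algebra_{\sigma(Z,H)}(K,Z)\,\}\subset C_b(\G_\algebra,(Z,\tau)).
\]
Using that $\G_Z$ is $1$-norming for $Z$ and that $z^*\circ f\in\algebra$ for $z^*\in\G_Z\subset H$, one gets $\norm{f}_{\infty,K}=\sup_{z^*\in\G_Z}\norm{z^*\circ f}_{\infty,K}=\sup_{z^*\in\G_Z}\norm{z^*\circ f}_{\infty,\G_\algebra}=\norm{f|_{\G_\algebra}}_{\infty,\G_\algebra}$, so restriction is an isometric isomorphism of $\algebra_{\sigma(Z,H)}(K,Z)$ onto $X$ (the domain being a sup-norm closed subspace, hence Banach); consequently $\ACK_\rho$ (resp.\ $\ACK$) structure passes between the two spaces.

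Next I would verify that $X$, $\hat\algebra$, $Z\in\mathcal{O}$-$\ACK_\rho$ ($\mathcal{O}$-$\ACK$) and $\tau$ satisfy conditions (i)--(iv) of Theorem~\ref{theo:unalg->ACKvector-valued}. Conditions (i)--(iii) are straightforward computations on functions defined on $K$: for $g\in\algebra_{\sigma(Z,H)}(K,Z)$, $h\in\algebra$, $z\in Z$, $F\in\mathcal{O}$ and $z^*\in H$ one has $z^*\circ(hg)=h\,(z^*\circ g)\in\algebra$, $z^*\circ(h\otimes z)=z^*(z)\,h\in\algebra$, and $z^*\circ(F\circ g)=(F^*z^*)\circ g\in\algebra$, the last using the hypothesis $F^*H\subset H$; boundedness and $\sigma(Z,H)$-continuity of these functions are immediate, and restricting to $\G_\algebra$ lands in $X$. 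The substantial point is (iv), and this is precisely where the second hypothesis of the corollary enters. Given $x_1,\dots,x_n\in X$, write $x_k=g_k|_{\G_\algebra}$ with $g_k\in\algebra_{\sigma(Z,H)}(K,Z)$; each $x_k$ is $\sigma(Z,H)$-continuous on $\G_\algebra$, hence $\sigma(Z,\G_Z)$-continuous since $\G_Z\subset H$. By hypothesis $(g_k(K),\sigma(Z,H))$ is norm-fragmented, hence so is its subset $x_k(\G_\algebra)$, and Theorem~\ref{theo:op-fragm-comp}(i) then gives that $x_k\colon(\G_\algebra,\tau)\to(Z,\norm{\cdot})$ is openly fragmented. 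Proposition~\ref{prop:separ-cont1.2}, applied to these $x_k$ and to $\G_Z\subset(B_{Z^*},w^*)$, yields the quasi-continuity of $\varphi(t,z^*)=(z^*(x_k(t)))_{k=1}^n$, which is exactly (iv). Theorem~\ref{theo:unalg->ACKvector-valued} now gives $X\in\ACK_\rho$ (resp.\ $\ACK$) with $1$-norming set $\{\delta_t\otimes z^*:t\in\G_\algebra,\,z^*\in\G_Z\}$, and transferring through the isometry completes the proof.

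I expect the main obstacle to be purely organizational: carefully setting up the isometric identification (in particular, checking that $\algebra_{\sigma(Z,H)}(K,Z)$ is closed under the three operations behind (i)--(iii) and that moving from $K$ to the boundary $\G_\algebra$ costs nothing), together with the bookkeeping of which $1$-norming set and which topology are fed into Theorem~\ref{theo:unalg->ACKvector-valued}. The only genuinely analytic ingredient, upgrading separate continuity of $\varphi$ to quasi-continuity, is black-boxed in Proposition~\ref{prop:separ-cont1.2}, whose hypothesis is met precisely because the second assumption of the corollary, via Theorem~\ref{theo:op-fragm-comp}(i), delivers exactly the open fragmentability that Proposition~\ref{prop:separ-cont1.2} requires. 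A small technical caveat: one should read the elements of $\algebra_{\sigma(Z,H)}(K,Z)$ as \emph{bounded} $\sigma(Z,H)$-continuous functions (implicit in calling it a sup-normed subspace of $C(K,(Z,\sigma(Z,H)))$), since for $H\subsetneq Z^*$ a $\sigma(Z,H)$-continuous image of a compact space need not be norm-bounded.
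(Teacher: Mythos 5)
Your proposal is correct and follows essentially the same route as the paper's (sketched) proof: pass to the boundary $\G_\algebra$ via Lemma~\ref{Ur-alg}, verify hypotheses (i)--(iii) of Theorem~\ref{theo:unalg->ACKvector-valued} from the definition of $\algebra_{\sigma(Z,H)}(K,Z)$ and the assumption $F^*H\subset H$, and obtain (iv) from the fragmentability hypothesis via Proposition~\ref{prop:separ-cont1.2}. You simply fill in details (the isometry of the restriction map, the use of Theorem~\ref{theo:op-fragm-comp}(i)) that the paper leaves implicit.
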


\begin{proof}[Sketch of the proof:]
It relays on the use of Theorem~\ref{theo:unalg->ACKvector-valued}. Let  $\G_\algebra \subset K$ be the corresponding subset from Lemma \ref{Ur-alg}.   Then, restrictions of elements of $\algebra$ to $\G_\algebra$ form an $\ACK$-subalgebra $C_{b}(\G_A)$ isometric to $\algebra$ (that we identify with $\algebra$) and restrictions of elements of $\algebra_{\sigma(Z,H)}(K,Z)$ to $\G_\algebra$ form a subspace $X  \subset  C_{b}(\G_\algebra, (Z, {\sigma(Z,H)}))$ isometric to $\algebra_{\sigma(Z,H)}(K,Z)$.  The conditions (i) and (ii) of Theorem~\ref{theo:unalg->ACKvector-valued} follow from the definition of $\algebra_{\sigma(Z,H)}(K,Z)$. The condition (iii) of Theorem~\ref{theo:unalg->ACKvector-valued} is reduced to the present condition (i). And, finally, the condition (iv) of Theorem~\ref{theo:unalg->ACKvector-valued} is reduced to the present (ii) by using Proposition~\ref{prop:separ-cont1.2}.
\end{proof}

The condition (i) above could be quite demanding, for instance, when $\mathcal{O}=L(Z)$ in which case $H$ is forced to be $Z^*$. However, in all concrete examples that we know of $\ACK$ structure, the family $\mathcal{O}$ can be taken really small. Thus, for concrete examples of $Z$, the condition (i) could be easily satisfied for every election of $H$.

By using the results from \cite{CNV} it can be shown that condition (ii) above is satisfied for every $H$ whenever $(Z,w)$ is Lindel\"of. Indeed, given $f$ belonging to $\algebra_{\sigma(Z,H)}(K,Z)$, $f(K)\subset Z$ is $\sigma(Z,H)$-compact, thus, it is also Lindel\"of. A straightforward application of \cite[Corollary E]{CNV} ensures that $(f(K),\sigma(Z,H))$ is norm-fragmented. Hence, in this case, Corollary~\ref{corollary_GVA} can be simplified as follows:

\begin{cor}
Let $Z \in \mathcal{O}$-$\ACK_\rho$ (or $Z \in \mathcal{O}$-$\ACK$) such that $(Z,w)$ is Lindel\"of and $\algebra\subset C(K)$  be a uniform algebra with $K$ being the space of multiplicative functionals on $\algebra$. Fix $\G_Z\subset H\subset Z^*$ such that $F^*H\subset H$ for every $F\in\mathcal{O}$, where $\G_Z$ is the $1$-norming set given by the ACK structure of $Z$. Then, $\algebra_{\sigma(Z,H)}(K,Z) \in \ACK_\rho$ (resp. $\algebra_{\sigma(Z,H)}(K,Z) \in \ACK$).
\end{cor}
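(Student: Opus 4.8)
The plan is to read off this statement as a special case of Corollary~\ref{corollary_GVA}. All the hypotheses of that corollary are assumed here word for word except its condition (ii) --- that $(f(K),\sigma(Z,H))$ be fragmented by the norm of $Z$ for every $f\in\algebra_{\sigma(Z,H)}(K,Z)$ --- while its condition (i), $F^*H\subset H$ for all $F\in\mathcal{O}$, is already part of the present hypotheses. So the entire task reduces to showing that the Lindel\"of assumption on $(Z,w)$ forces condition (ii) for every admissible $H$ with $\G_Z\subset H\subset Z^*$.

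To do this I would fix $f\in\algebra_{\sigma(Z,H)}(K,Z)$ and note that, since $z^*\circ f\in\algebra\subset C(K)$ for each $z^*\in H$, the map $f\colon K\to (Z,\sigma(Z,H))$ is continuous; hence $f(K)$ is a $\sigma(Z,H)$-compact subset of $Z$, and in particular Lindel\"of in that topology. Because $H\subset Z^*$, the topology $\sigma(Z,H)$ is coarser than the weak topology of $Z$, so the standing hypothesis that $(Z,w)$ is Lindel\"of is available for the set $f(K)$ as well. These are exactly the inputs needed to invoke \cite[Corollary E]{CNV}, which then yields that $(f(K),\sigma(Z,H))$ is norm-fragmented. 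Since $f$ was arbitrary, this establishes (ii). With (i) and (ii) both in hand, Corollary~\ref{corollary_GVA} applies directly and delivers $\algebra_{\sigma(Z,H)}(K,Z)\in\ACK_\rho$ (respectively $\in\ACK$), which is the assertion.

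The hard part --- or rather, the only part that is not bookkeeping --- is the implication ``$\sigma(Z,H)$-compact $\Rightarrow$ norm-fragmented in $\sigma(Z,H)$'' for subsets of $Z$; this is where the Lindel\"of assumption on $(Z,w)$ is genuinely consumed, through the descriptive-topology results of \cite{CNV}, and it is the reason the Lindel\"of hypothesis appears in the statement at all. Everything else is unwinding definitions; notably, no fresh verification of the $\ACK$ axioms (I)--(VI) is required here, that work having already been carried out in the proofs of Corollary~\ref{corollary_GVA} and Theorem~\ref{theo:unalg->ACKvector-valued}.
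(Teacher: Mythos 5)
Your proposal is correct and takes essentially the same route as the paper: the paper also reduces the statement to Corollary~\ref{corollary_GVA}, observing that condition (i) is assumed and that condition (ii) follows from the $\sigma(Z,H)$-compactness of $f(K)$ together with the weak Lindel\"of hypothesis on $Z$ via \cite[Corollary E]{CNV}. The only point worth tightening is your claim that weak Lindel\"ofness ``is available for $f(K)$'': since $f(K)$ is $\sigma(Z,H)$-compact it is weakly closed, hence weakly Lindel\"of as a closed subspace of $(Z,w)$, which is what the application of \cite[Corollary E]{CNV} actually uses.
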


Observe that when $Z$ has property $\beta$, the set $\mathcal{O}$ coincides with the set $\{x_{\alpha}^*(\cdot)\,x_\alpha: \alpha\in\Lambda\}$. Therefore, in this case, $F^*H\subset H$ for every $H$ and for every $F\in\mathcal{O}$. Thus, we have proved the following corollary.
\begin{cor}
Let $Z$ be a Banach space with property $\beta$ such that $(Z,w)$ is Lindel\"of and $\algebra\subset C(K)$  be a uniform algebra with $K$ being the space of multiplicative functionals on $\algebra$. Fix $\G_Z\subset H\subset Z^*$, where $\G_Z=\{x^*_\alpha: \alpha\in\Lambda\}$. Then, $\algebra_{\sigma(Z,H)}(K,Z) \in \ACK_\rho$.
\end{cor}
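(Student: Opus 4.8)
The plan is to apply Corollary~\ref{corollary_GVA}; the whole task is to check that its two hypotheses hold automatically once $Z$ has property $\beta$ and $(Z,w)$ is Lindel\"of. The first thing to pin down is which family $\mathcal{O}$ of operators witnesses the ACK structure of $Z$. Although Theorem~\ref{theo:beta->ACK} is phrased for $\ACK_\rho$ (i.e.\ $\mathcal{O}=L(Z)$), its proof in fact produces, for each non-empty relatively $w^*$-open $U\subset\G_Z$, the rank-one operator $F(x)=x_{\alpha_0}^*(x)\,x_{\alpha_0}$ for a suitable index $\alpha_0$ with $x_{\alpha_0}^*\in U$. Hence $Z\in\mathcal{O}$-$\ACK_\rho$ with $\mathcal{O}=\{x_\alpha^*(\,\cdot\,)\,x_\alpha:\alpha\in\Lambda\}$ and with the $1$-norming set $\G_Z=\{x_\alpha^*:\alpha\in\Lambda\}$ from Definition~\ref{def:beta}.

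Next I would verify hypothesis~(i) of Corollary~\ref{corollary_GVA}, namely $F^*H\subset H$ for all $F\in\mathcal{O}$. For $F=x_\alpha^*(\,\cdot\,)\,x_\alpha$ a direct computation gives $F^*z^*=z^*(x_\alpha)\,x_\alpha^*$ for every $z^*\in Z^*$, so $F^*z^*$ is a scalar multiple of $x_\alpha^*\in\G_Z\subset H$. Since the topology $\sigma(Z,H)$ depends only on the linear span of $H$, we may and do assume $H$ to be a linear subspace of $Z^*$; then $F^*H\subset H$ holds for every admissible $H$ and every $F\in\mathcal{O}$, so hypothesis~(i) comes for free here.

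For hypothesis~(ii) of Corollary~\ref{corollary_GVA} --- that $(f(K),\sigma(Z,H))$ be norm-fragmented for every $f\in\algebra_{\sigma(Z,H)}(K,Z)$ --- I would reuse the argument already given just above the statement of this corollary: such an $f(K)$ is $\sigma(Z,H)$-compact, hence $\sigma(Z,H)$-Lindel\"of, and since $(Z,w)$ is Lindel\"of, \cite[Corollary~E]{CNV} applies and yields that $(f(K),\sigma(Z,H))$ is norm-fragmented. With both hypotheses in hand, Corollary~\ref{corollary_GVA} gives $\algebra_{\sigma(Z,H)}(K,Z)\in\ACK_\rho$.

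I do not expect a genuine obstacle: the only point that deserves attention is the observation that for a space with property $\beta$ the ACK structure can be realised with $\mathcal{O}$ consisting only of the rank-one operators $x_\alpha^*(\,\cdot\,)\,x_\alpha$, which is precisely what makes the condition $F^*H\subset H$ valid for \emph{every} choice of $H$ --- everything else is a bookkeeping appeal to results already established.
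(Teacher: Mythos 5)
Your proposal is correct and follows essentially the same route as the paper: realise the ACK structure of a property-$\beta$ space with $\mathcal{O}=\{x_\alpha^*(\cdot)\,x_\alpha:\alpha\in\Lambda\}$ as in the proof of Theorem~\ref{theo:beta->ACK}, check $F^*H\subset H$ via $F^*z^*=z^*(x_\alpha)x_\alpha^*$, and obtain hypothesis (ii) of Corollary~\ref{corollary_GVA} from the Lindel\"of property of $(Z,w)$ and \cite[Corollary E]{CNV}. Your additional remark that one should pass to the linear span of $H$ is a legitimate (and slightly more careful) touch, since both $\sigma(Z,H)$ and the space $\algebra_{\sigma(Z,H)}(K,Z)$ depend only on that span, whereas the paper simply asserts $F^*H\subset H$ for every $H$.
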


However, this technique can not fully generalize Theorem~\ref{theo:C(K,Y)} by Acosta et al. to the case of vector-valued uniform algebras, since here the Lindel\"of property is essential and property $\beta$ does not imply in general weak Lindel\"of. Observe that nevertheless the original statement of Theorem~\ref{theo:C(K,Y)} is covered completely by our Corollary~\ref{cor:separ-cont1.1}.
}


\def\cprime{$'$}\def\cdprime{$''$}
  \def\polhk#1{\setbox0=\hbox{#1}{\ooalign{\hidewidth
  \lower1.5ex\hbox{`}\hidewidth\crcr\unhbox0}}}
\providecommand{\bysame}{\leavevmode\hbox to3em{\hrulefill}\thinspace}
\providecommand{\MR}{\relax\ifhmode\unskip\space\fi MR }
\providecommand{\MRhref}[2]{%
  \href{http://www.ams.org/mathscinet-getitem?mr=#1}{#2}
}
\providecommand{\href}[2]{#2}

\end{document}